\documentclass{amsart}
\usepackage[a4paper]{geometry}
\usepackage{amssymb}
\usepackage{aliascnt}
\usepackage[hyphens]{url}
\urlstyle{same}
\usepackage{amsmath}
\usepackage{amsthm}
\usepackage{mathrsfs}
\usepackage[title]{appendix}
\usepackage[english]{babel}
\usepackage{cite}
\usepackage{hyperref}
\usepackage{footmisc}
\usepackage{cleveref}
\usepackage[section]{placeins}
\RequirePackage{textgreek}
\usepackage{enumitem}
\usepackage{bm}
\usepackage{nicefrac}
\usepackage[all,cmtip]{xy}
\usepackage{caption}
\usepackage{float}
\usepackage{mathtools}
\usepackage{xcolor}
\usepackage{xspace}

\setcounter{tocdepth}{2}
\hypersetup{
    colorlinks=true,
    linkcolor=blue,
    citecolor=cyan,
    urlcolor=red,
    linktoc=all
}
\numberwithin{equation}{section}

\widowpenalty=10000
\clubpenalty=10000
\displaywidowpenalty=10000

\newtheorem{thm}{Theorem}[section]
\newtheorem*{thm*}{Main Theorem}
\newaliascnt{lem}{thm}
\newtheorem{lem}[lem]{Lemma}
\aliascntresetthe{lem}
\newaliascnt{prop}{thm}
\newtheorem{prop}[prop]{Proposition}
\aliascntresetthe{prop}
\newaliascnt{cor}{thm}

\aliascntresetthe{cor}
\newtheorem*{cor*}{Corollary}
\newaliascnt{obs}{thm}

\aliascntresetthe{obs}
\newaliascnt{defn}{thm}
\newtheorem{defn}[defn]{Definition}
\aliascntresetthe{defn}
\newaliascnt{con}{thm}

\aliascntresetthe{con}
\newaliascnt{que}{thm}

\aliascntresetthe{que}
\newaliascnt{fct}{thm}
\newtheorem{fct}[fct]{Fact}
\aliascntresetthe{fct}
\newaliascnt{claim}{thm}

\aliascntresetthe{claim}
\theoremstyle{remark}
\newaliascnt{rem}{thm}
\newtheorem{rem}[rem]{Remark}
\aliascntresetthe{rem}
\newaliascnt{exm}{thm}
\newtheorem{exm}[exm]{Example}
\aliascntresetthe{exm}

\crefname{thm}{theorem}{theorems}
\Crefname{thm}{Theorem}{Theorems}
\crefname{mainthm}{theorem}{theorems}
\Crefname{mainthm}{Theorem}{Theorems}
\crefname{lem}{lemma}{lemmas}
\Crefname{lem}{Lemma}{Lemmas}
\crefname{prop}{proposition}{propositions}
\Crefname{prop}{Proposition}{Propositions}
\crefname{cor}{corollary}{corollaries}
\Crefname{cor}{Corollary}{Corollaries}
\crefname{obs}{observation}{observations}
\Crefname{obs}{Observation}{Observations}
\crefname{defn}{definition}{definitions}
\Crefname{defn}{Definition}{Definitions}
\crefname{exs}{examples}{examples}
\Crefname{exs}{Examples}{Examples}
\crefname{exm}{example}{examples}
\Crefname{exm}{Example}{Examples}
\crefname{question}{question}{questions}
\Crefname{question}{Question}{Questions}
\crefname{que}{question}{questions}
\Crefname{que}{Question}{Questions}
\crefname{con}{construction}{constructions}
\Crefname{con}{Construction}{Constructions}
\crefname{rem}{remark}{remarks}
\Crefname{rem}{Remark}{Remarks}
\crefname{fct}{fact}{facts}
\Crefname{fct}{Fact}{Facts}
\crefname{claim}{claim}{claims}
\Crefname{claim}{Claim}{Claims}
\crefname{section}{section}{sections}
\Crefname{section}{Section}{Sections}
\crefname{subsection}{section}{sections}
\Crefname{subsection}{Section}{Sections}
\crefname{subsubsection}{section}{sections}
\Crefname{subsubsection}{Section}{Sections}
\crefname{equation}{equation}{equations}
\Crefname{equation}{Equation}{Equations}
\crefname{figure}{figure}{figures}
\Crefname{figure}{Figure}{Figures}
\crefname{table}{table}{tables}
\Crefname{table}{Table}{Tables}
\crefname{enumi}{item}{items}
\Crefname{enumi}{Item}{Items}
\crefname{enumii}{item}{items}
\Crefname{enumii}{Item}{Items}

\newcommand{\pto}{\mathrel{\xrightarrow{\;\mathrm{p}\;}}}
\newcommand{\eqd}{\mathrel{\overset{{\scriptscriptstyle \mathrm{d}}}{=}}}
\newcommand{\eqp}{\mathrel{\overset{{\scriptscriptstyle \mathbb{P}}}{=}}}

\makeatletter
\def\@settitle{%
  \begin{center}%
    \baselineskip18\p@\relax
    \normalfont\Large\scshape
    \@title
  \end{center}%
}
\makeatother

\title[Ergodicity and weak mixing for infinitely divisible stationary processes]{Ergodicity and weak mixing for group-indexed infinitely divisible stationary processes}

\subjclass[2020]{60G10, 60E07, 60G52, 60G55, 60H05, 37A50}

\keywords{Stationary process, infinitely divisible process, ergodicity, weak mixing}

\author[Avraham-Re'em]{Nachi Avraham-Re'em}
\address{Technion Israel Institute of Technology, Faculty of  Mathematics, Technion City, Haifa 3200003, Israel}
\email{nachi.avraham@gmail.com}

\author[Roy]{Emmanuel Roy}
\address{Laboratoire Analyse Géométrie et Applications, Université Paris 13, Institut Galilée,
99 avenue Jean-Baptiste Clément, 93430 Villetaneuse, France}
\email{roy@math.univ-paris13.fr}

\thanks{N.~Avraham-Re'em was supported by the Knut and Alice Wallenberg Foundation (KAW 2021.0258) and the ISF grant No. 3423/24. E.~Roy is a member of project IZES-ANR-22-CE40-0011.}

\begin{document}

\begin{abstract}
We prove that for an arbitrary indexing group, every ergodic infinitely divisible stationary process that is separable in probability is weakly mixing. This shows that, as in the well-known case of Gaussian stationary processes, ergodicity implies weak mixing is intrinsic to infinite divisibility, removing all structural assumptions on the group from prior results. The main ingredient is a general construction of stochastically continuous extensions for separable in probability stationary processes, reducing the problem to stochastically continuous processes indexed by Polish groups and then to countable groups, where we combine the Maruyama representation with an ergodicity criterion for Poisson suspensions.
\end{abstract}

\maketitle

\section{Introduction}

A stationary process indexed by a group \(G\) is a stochastic process \(\left(X_{g}\right)_{g\in G}\) whose law is invariant under left translation by \(G\)'s elements. It is \emph{infinitely divisible} if all finite dimensional distributions are infinitely divisible random vectors. See \cref{sct:preli,sct:necback} for background and for precise definitions of ergodicity and weak mixing.

\smallskip

By the classical L\'{e}vy--Khintchine representation, every infinitely divisible stationary process can be decomposed in law as the sum of two independent stationary processes: a Gaussian stationary process and an infinitely divisible Poissonian stationary process. For Gaussian stationary processes, it is a classical result that ergodicity implies weak mixing. For \(G=\mathbb{Z}\) or \(\mathbb{R}\), it dates back to It{\^o}~\cite{ito194413}, Grenander, Fomin, and Maruyama~\cite[\S5]{maruyama1949harmonic} (see also \cite[\S2]{CornfeldFominSinai1982}). It was extended later to locally compact abelian groups, and ultimately to locally compact groups by Tempelman; see the bibliographical notes in \cite[p.~83]{tempelman2013}, \cite[p.~93]{glasner2003ergodic}, \cite[Appendix~A]{glasnerweiss1997}.

\smallskip

Within the rich class of infinitely divisible Poissonian stationary processes, ergodicity implies weak mixing has been established in various cases under structural assumptions on the indexing group and on the process. We briefly review the existing results below. We remove the assumptions on the group \(G\), and assume only that the process is separable in probability. Note that separability in probability is a property of the law of the process, not of the indexing group, and it is the minimal hypothesis that replaces topological assumptions on \(G\) while ensuring realizability on a standard Borel space (\cref{prop:stand}). It is a standard working assumption in infinitely divisible processes; see e.g. Maruyama \cite{maruyama1970infinitely}, Rajput and Rosinski \cite{rajput1989spectral}, Rosinski \cite{rosinski1995structure}, Samorodnitsky and Taqqu \cite[\S3.11]{samorodnitsky1994stable}, Kabluchko and Stoev \cite{kabluchko2016stochastic}.

\smallskip

Our main theorem demonstrates that, analogous to the Gaussian case, ergodicity implies weak mixing is intrinsic to infinite divisibility, and it remains valid for processes indexed by any group provided the process is separable in probability. This removes all structural assumptions on \(G\) (locally compact, abelian, amenable) and the process being stochastically continuous from previous results surveyed below. Our approach bypasses the ergodic theorems and harmonic analysis central to previous works. Together with the classical Gaussian case and the L\'{e}vy--Khintchine decomposition (via \cref{lem:sam}), this yields the following.

\begin{thm*}
Every ergodic infinitely divisible stationary process that is separable in probability is weakly mixing.
\end{thm*}

\subsubsection*{Previous results}

For Gaussian stationary processes, using the classical results mentioned before, together with taking stochastically continuous extensions, one can deduce that ergodicity implies weak mixing for all separable in probability Gaussian stationary processes indexed by any group.

\smallskip

For infinitely divisible Poissonian stationary processes, many results establishing ergodicity implies weak mixing precede ours. Following the aforementioned classical results about Gaussian stationary processes, the next class in which this property was established comprises stochastically continuous symmetric \(\alpha\)-stable stationary processes with \(\alpha \in (0,2)\). This line of results began with Cambanis, Hardin, and Weron \cite{cambanis1987ergodic}, and Podg\'{o}rski and Weron \cite{PodgorskiWeron1991, Podgorski1992} for \(G=\mathbb{Z}\) or \(\mathbb{R}\). It was subsequently extended by Wang, P. Roy, and Stoev \cite[Thm.~4.1]{Wang2013} to \(G=\mathbb{Z}^d\) or \(\mathbb{R}^d\), and more recently by one of the authors \cite[Thm.~1.1]{AvrahamReem2023} to countable amenable groups \(G\) (see the Notes to Chapter~4 in \cite[pp.~579--582]{samorodnitsky1994stable}). The same was established for symmetric semistable stationary processes with \(G=\mathbb{Z}\) or \(\mathbb{R}\) by Kokoszka and Podg\'{o}rski \cite{kokoszka1992ergodicity}. In a more general setting, and still for \(G=\mathbb{Z}\) or \(\mathbb{R}\) and under stochastic continuity assumption, this was proved for symmetric infinitely divisible stationary processes by Cambanis, Podg\'{o}rski, and Weron \cite[Thm.~2]{cambanis1995chaotic}, and for infinitely divisible processes by Rosi\'{n}ski and \.{Z}ak \cite{rosinskizak1997equivalence}; see also \cite[Thm.~8.5.1]{samorodnitsky2016stochastic}. Recently this was extended to \(G=\mathbb{Z}^d\) and \(\mathbb{R}^d\) by Passeggeri and Veraart \cite[Thm.~4.11]{passeggeri2019mixing}.

\smallskip

All aforementioned results regarding infinitely divisible Poissonian processes were proved under the assumption of the group being locally compact abelian or amenable. These assumptions serve two main purposes in previous works: first, it is assumed that the processes are stochastically continuous, which is defined with respect to a suitable topology on the indexing group, and second, they allow the use of the ergodic theorem both in defining ergodicity, as well as in proofs that rely on the analysis of related functionals, e.g. \emph{dynamical functional} \cite{cambanis1995chaotic}; \emph{codifference} \cite{rosinskizak1997equivalence}; \emph{correlation cascades} \cite{magdziarz2009correlation}.

\subsubsection*{About the Proof}

A main new ingredient is the construction of stochastically continuous extensions (\cref{thm:stochext}), which applies to any separable in probability stationary process and is of independent interest: for any separable in probability stationary \(G\)-process for a group \(G\), we construct a stochastically continuous stationary \(\widehat{G}\)-process, where \(\widehat{G}\) is a Polish group containing a quotient of \(G\) as a dense subgroup. This construction not only preserves the distribution class of the process (e.g. being Gaussian or being infinitely divisible Poissonian), but it also preserves both ergodicity and weak mixing.

\smallskip

Stochastically continuous extensions reduce the problem to stochastically continuous processes indexed by Polish groups, which then reduces the problem to countable dense subgroups (\cref{lem:ergdens}). For countable groups, inspired by the proof of one of the authors in \cite[Thm.~5.8]{roy2007ergodic} for the case of \(G=\mathbb{Z}\), we apply the Maruyama representation and the characterization of ergodicity for Poisson suspensions from \cite[Thm.~2]{avraham2025poissonian}. The key property characterizing ergodicity, from which the weak mixing property follows, is that the measure preserving action on the L\'{e}vy measure space is null: it admits no invariant sets of positive finite measure. This is analogous to Samorodnitsky's characterization \cite[Thm.~3.1]{samorodnitsky2005null} (see also \cite[Thm.~1.3]{AvrahamReem2023}) in symmetric \(\alpha\)-stable stationary processes indexed by \(G=\mathbb{Z}\) or \(\mathbb{R}\), using their Rosinski representation.

\subsubsection*{The structure of the paper}

\Cref{sct:preli} introduces basic preliminaries. \Cref{sct:stochext} clarifies key concepts in stationary processes and their counterparts in ergodic theory, culminating in the construction of stochastically continuous extensions (\cref{thm:stochext}). \Cref{sct:finalproof} provides background on infinite divisibility and Maruyama representation, and the proof of the main theorem. \Cref{sct:exm} presents the null condition for weak mixing, and some applications of the main theorem.

\section{Stationary processes, ergodicity, weak mixing}\label{sct:preli}

\subsection{Stationary \(G\)-processes}

Let \(G\) be a set. A (stochastic) {\bf \(G\)-process} is a collection of random variables \(\mathbf{X}=\left(X_{g}\right)_{g\in G}\) defined on the same probability space \(\left(\Omega,\mathcal{F},\mathbb{P}\right)\), namely \(X_{g}\in L^{0}\left(\Omega,\mathcal{F},\mathbb{P}\right)\) for every \(g\in G\). Technically speaking, each element of \(L^{0}\left(\Omega,\mathcal{F},\mathbb{P}\right)\), a random variable, is defined only \(\mathbb{P}\)-a.s. The {\bf distribution} of \(\mathbf{X}\), denoted \(\mathbb{P}_{\mathbf{X}}\), is the probability measure on \(\left(\mathbb{R}^{G},\mathcal{B}^{G}\right)\), where \(\mathcal{B}\) is the Borel \(\sigma\)-algebra of \(\mathbb{R}\) and \(\mathcal{B}^{G}\) is the product \(\sigma\)-algebra,\footnote{The probability space \(\left(\Omega,\mathcal{F},\mathbb{P}\right)\) is not assumed to be \emph{standard} at this stage. The \(\sigma\)-algebra \(\mathcal{B}^{G}\) is generated by the cylindrical sets, and by the Kolmogorov extension theorem, a probability measure on \(\left(\mathbb{R}^{G},\mathcal{B}^{G}\right)\) is determined uniquely by the finite dimensional distributions.} given by
\[\mathbb{P}_{\mathbf{X}}\big(\left(x_{g}\right)_{g\in G}\in\mathbb{R}^{G}:\left(x_{g_{1}},\dotsc,x_{g_{n}}\right)\in E_{1}\times\dotsm\times E_{n}\big)=\mathbb{P}\left(X_{g_{1}}\in E_{1},\dotsc,X_{g_{n}}\in E_{n}\right).\]
For a \(G\)-process \(\mathbf{X}=\left(X_{g}\right)_{g\in G}\), the (finite dimensional) characteristic functions are the functions
\[\varphi_{\left(g_{1},\dotsc,g_{N}\right)}\left(t_{1},\dotsc,t_{N}\right)\coloneq\mathbb{E}\Big[\exp\Big(i\sum\nolimits_{n=1}^{N}t_{n}X_{g_{n}}\Big)\Big],\]
for \(N\geq 1\) and \(g_{1},\dotsc,g_{N}\in G\). The characteristic functions of \(\mathbf{X}\) determine its distribution. When two \(G\)-processes \(\mathbf{X}\) and \(\mathbf{Y}\) have the same distribution, we will write \(\mathbf{X}\eqd \mathbf{Y}\).

\smallskip

When \(G\) is a group, a \(G\)-process \(\mathbf{X}=\left(X_{g}\right)_{g\in G}\) is called a (left) {\bf stationary \(G\)-process}, if
\[\mathbb{P}\left(X_{g_{1}}\in E_{1},\dotsc,X_{g_{n}}\in E_{n}\right)=\mathbb{P}\left(X_{gg_{1}}\in E_{1},\dotsc,X_{gg_{n}}\in E_{n}\right),\]
for all \(g,g_{1},\dotsc,g_{n}\in G\) and \(E_{1},\dotsc,E_{n}\in\mathcal{B}\). In other words, the \(G\)-processes \(\left(X_{gh}\right)_{h\in G}\) and \(\left(X_{h}\right)_{h\in G}\) are equal in distribution for each \(g\in G\). We will denote the (left) translations of \(G\) on \(\mathbb{R}^{G}\) by \(\mathcal{R}=\{R_{g}\}_{g\in G}\), that is, for \(g\in G\), the transformation \(R_{g}:\mathbb{R}^{G}\to\mathbb{R}^{G}\) is given by
\[R_{g}:\left(x_{h}\right)_{h\in G}\mapsto\left(x_{g^{-1}h}\right)_{h\in G}.\]
Given two stationary \(G\)-processes \(\mathbf{X}\) and \(\mathbf{Y}\), define the stationary \(G\)-processes
\[\mathbf{X}\oplus\mathbf{Y}\coloneq\big(X_{g}^{\prime}+Y_{g}^{\prime}\big)_{g\in G}\quad\text{and}\quad\mathbf{X}\otimes\mathbf{Y}\coloneq\big(X_{g}^{\prime},Y_{g}^{\prime}\big)_{g\in G},\footnote{The coordinates of \(\mathbf{X}\otimes\mathbf{Y}\) are two-dimensional random vectors.}\]
where \(\left(X_{g}^{\prime}\right)_{g\in G}\) and \(\left(Y_{g}^{\prime}\right)_{g\in G}\) are independent, \(\left(X_{g}^{\prime}\right)_{g\in G}\eqd\mathbf{X}\) and \(\big(Y_{g}^{\prime}\big)_{g\in G}\eqd\mathbf{Y}\).

\subsection{Ergodicity and weak mixing}\label{sct:ergwm}

Let \(\left(\mathbb{X},\mathcal{X},\xi\right)\) be a probability space (one should keep in mind \(\left(\mathbb{R}^{G},\mathcal{B}^{G},\mathbb{P}_{\mathbf{X}}\right)\)). A {\bf probability preserving automorphism} of \(\left(\mathbb{X},\mathcal{X},\xi\right)\) is a bi-measurable bijection \(T:\mathbb{X}^{\prime}\to\mathbb{X}^{\prime\prime}\) for some \(\mathbb{X}^{\prime},\mathbb{X}^{\prime\prime}\in\mathcal{X}\) with \(\xi\left(\mathbb{X}^{\prime}\right)=\xi\left(\mathbb{X}^{\prime\prime}\right)=1\), such that \(\xi\left(T^{-1}\left(A\right)\right)=\xi\left(A\right)\) for every \(A\in\mathcal{X}\). Let \(G\) be a group. A {\bf probability preserving \(G\)-system} on \(\left(\mathbb{X},\mathcal{X},\xi\right)\), is a collection \(\mathcal{T}=\{T_{g}\}_{g\in G}\) of probability preserving automorphisms of \(\left(\mathbb{X},\mathcal{X},\xi\right)\), such that
\[T_{e_{G}}=\mathrm{Id}_{\mathbb{X}}\,\,\,\xi\text{-a.s. and }T_{g}\circ T_{h}=T_{gh}\,\,\,\xi\text{-a.s. for all }g,h\in G.\]
The \(G\){\bf-invariant} \(\sigma\){\bf-algebra} of such a probability preserving \(G\)-system \(\mathcal{T}\) is the sub-\(\sigma\)-algebra
\[\mathcal{I}_{\mathcal{T}}\coloneq\{E\in\mathcal{X}:\xi\left(T_{g}^{-1}\left(E\right)\triangle E\right)=0\text{ for every }g\in G\}.\]
For a pair of probability preserving \(G\)-systems \(\mathcal{T}=\{T_{g}\}_{g\in G}\) and \(\mathcal{S}=\{S_{g}\}_{g\in G}\) on \(\left(\mathbb{X},\mathcal{X},\xi\right)\) and \(\left(\mathbb{Y},\mathcal{Y},\eta\right)\), respectively, denote by \(\mathcal{T}\times\mathcal{S}=\{T_{g}\times S_{g}\}_{g\in G}\) the probability preserving \(G\)-system on \(\left(\mathbb{X}\times\mathbb{Y},\mathcal{X}\otimes\mathcal{Y},\xi\otimes\eta\right)\) given by
\[T_{g}\times S_{g}\left(x,y\right)=\left(T_{g}\left(x\right),S_{g}\left(y\right)\right).\]

\smallskip

A probability preserving \(G\)-system \(\mathcal{T}\) is {\bf ergodic} if \(\xi\left(E\right)\in\{0,1\}\) for every \(E\in\mathcal{I}_{\mathcal{T}}\), and it is {\bf weakly mixing} if \(\mathcal{T}\times\mathcal{T}\) is ergodic. It is well-known that weak mixing is equivalent to that \(\mathcal{T}\times\mathcal{S}\) is ergodic for every ergodic probability preserving \(G\)-system \(\mathcal{S}\) (see Appendix~\ref{app:wm}). Denote by \(\widehat{\mathcal{X}}\) the \(\xi\)-completion of \(\mathcal{X}\), namely the \(\sigma\)-algebra of sets that differ from a set in \(\mathcal{X}\) by a set of \(\xi\)-measure \(0\), then the ergodicity of \(\left(\mathbb{X},\mathcal{X},\xi\right)\) is equivalent to that of \(\big(\mathbb{X},\widehat{\mathcal{X}},\xi\big)\), and the same is true for weak mixing. In fact, the invariant \(\sigma\)-algebra of \(\big(\mathbb{X},\widehat{\mathcal{X}},\xi\big)\) is \(\widehat{\mathcal{I}_{\mathcal{T}}}\), the \(\xi\)-completion of \(\mathcal{I}_{\mathcal{T}}\).

\begin{rem}
Ergodicity and weak mixing admit various alternative characterizations when additional structure is imposed. Most notably, when \(G\) is a countable group or a locally compact second countable group, and the system comprises a measurable action on a standard probability space (that is, a \emph{probability preserving action}; see the definition below), the associated Koopman representation yields a weakly continuous unitary representation of \(G\) on a separable Hilbert space, which can be used to define ergodicity and weak mixing. When \(G\) is further amenable, then ergodicity and weak mixing can be formulated entirely using the ergodic theorem, as is customary in the probabilistic literature. See \cite[Ch.~2 and Theorem~1.6]{tempelman2013}.
\end{rem}

Given a stationary \(G\)-process \(\mathbf{X}\), one naturally considers the probability preserving \(G\)-system
\[\mathcal{R}=\{R_{g}\}_{g\in G}\text{ on }\left(\mathbb{R}^{G},\mathcal{B}^{G},\mathbb{P}_{\mathbf{X}}\right),\]
for the aforementioned translations given by \(R_{g}:\left(x_{h}\right)_{h\in G}\mapsto\left(x_{g^{-1}h}\right)_{h\in G}\) for every \(g\in G\). The stationarity of \(\mathbf{X}\) means that \(\mathcal{R}\) indeed forms a probability preserving \(G\)-system with respect to \(\mathbb{P}_{\mathbf{X}}\). Note that the probability preserving \(G\)-system associated with \(\mathbf{X}\otimes\mathbf{X}\) is nothing but \(\mathcal{R}\times\mathcal{R}\).

\begin{defn}
Let \(G\) be a group and \(\mathbf{X}\) a stationary \(G\)-process.
\begin{itemize}
    \item \(\mathbf{X}\) is called {\bf ergodic} (or {\bf \(G\)-ergodic}) if its associated probability-preserving \(G\)-system \(\mathcal{R}\) on \(\left(\mathbb{R}^{G},\mathcal{B}^{G},\mathbb{P}_{\mathbf{X}}\right)\) is ergodic.
    \item \(\mathbf{X}\) is called {\bf weakly mixing} (or {\bf \(G\)-weakly mixing}) if \(\mathbf{X}\otimes\mathbf{X}\) is ergodic. Equivalently, if its associated probability preserving \(G\)-system \(\mathcal{R}\) on \(\left(\mathbb{R}^{G},\mathcal{B}^{G},\mathbb{P}_{\mathbf{X}}\right)\) is weakly mixing.
\end{itemize}
\end{defn}

Both ergodicity and weak mixing depend only on the distribution of the process. Weak mixing always implies ergodicity, but the converse generally fails. The following lemma will be useful in determining the ergodicity and weak mixing in infinitely divisible processes, as by virtue of the L\'{e}vy--Khintchine representation their characteristic functions never vanish. It was observed by Samorodnitsky in his proof of \cite[Thm.~3.1]{samorodnitsky2005null}, and we include a proof here.

\begin{lem}[Samorodnitsky]\label{lem:sam}
Let \(G\) be a group, and let \(\mathbf{X}\) and \(\mathbf{Y}\) be independent stationary \(G\)-processes whose finite dimensional characteristic functions never vanish. If \(\mathbf{X}\oplus\mathbf{Y}\) is ergodic then both \(\mathbf{X}\) and \(\mathbf{Y}\) are ergodic.
\end{lem}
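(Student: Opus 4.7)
The plan is to argue by contradiction: assume $\mathbf{X}$ fails to be ergodic, and use ergodicity of $\mathbf{X}\oplus\mathbf{Y}$ together with $\phi_\mathbf{Y}\neq 0$ to force every bounded $\mathcal{R}$-invariant function on $(\mathbb{R}^{G},\mathbb{P}_{\mathbf{X}})$ to be constant. Realize $\mathbf{X}\oplus\mathbf{Y}$ as the pushforward of $(\mathbb{R}^{G}\times\mathbb{R}^{G},\mathbb{P}_{\mathbf{X}}\otimes\mathbb{P}_{\mathbf{Y}})$ under the sum map $\Sigma(x,y)\coloneqq x+y$. Since $\Sigma$ is $G$-equivariant with respect to the diagonal action $\mathcal{R}\times\mathcal{R}$ on the source and $\mathcal{R}$ on the target, this exhibits $(\mathbb{R}^{G},\mathbb{P}_{\mathbf{X}\oplus\mathbf{Y}},\mathcal{R})$ as a factor of $(\mathbb{R}^{G}\times\mathbb{R}^{G},\mathbb{P}_{\mathbf{X}}\otimes\mathbb{P}_{\mathbf{Y}},\mathcal{R}\times\mathcal{R})$.

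Now take a bounded $\mathcal{R}$-invariant $\tilde f$ on $(\mathbb{R}^{G},\mathbb{P}_{\mathbf{X}})$ with mean $c$, lift it to $F(x,y)\coloneqq \tilde f(x)$ (which is $(\mathcal{R}\times\mathcal{R})$-invariant on the product), and let $\eta\circ\Sigma\coloneqq \mathbb{E}[F\mid\sigma(\Sigma)]$. Invariance of $F$ together with invariance of $\sigma(\Sigma)$ under $\mathcal{R}\times\mathcal{R}$ makes $\eta$ an $\mathcal{R}$-invariant element of $L^{2}(\mathbb{R}^{G},\mathbb{P}_{\mathbf{X}\oplus\mathbf{Y}})$; ergodicity of $\mathbf{X}\oplus\mathbf{Y}$ then forces $\eta\equiv c$, that is, $\mathbb{E}[\tilde f(X)\mid X+Y]=c$ almost surely. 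Test this identity against the cylinder characters $\chi_{t}(z)\coloneqq \exp\bigl(i\sum_{g} t_{g} z_{g}\bigr)$ for finitely supported $t\colon G\to\mathbb{R}$: by independence and the tower property,
\[
0\;=\;\mathbb{E}\bigl[(\tilde f(X)-c)\,\chi_{t}(X+Y)\bigr]\;=\;\mathbb{E}\bigl[(\tilde f(X)-c)\,\chi_{t}(X)\bigr]\cdot\phi_{\mathbf{Y}}(t).
\]
Since $\phi_{\mathbf{Y}}(t)\neq 0$, we obtain $\mathbb{E}[(\tilde f(X)-c)\chi_{t}(X)]=0$ for every finitely supported $t$.

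To close the argument, invoke the standard fact that the cylinder characters span a dense subspace of $L^{2}(\mathbb{R}^{G},\mathbb{P}_{\mathbf{X}})$, which reduces cylinder by cylinder to uniqueness of the Fourier transform on finite Borel measures on $\mathbb{R}^{n}$; this forces $\tilde f\equiv c$ almost surely. Taking $\tilde f=\mathbf{1}_{A}$ for a supposed non-trivial $\mathcal{R}$-invariant $A\in\mathcal{B}^{G}$ yields the contradiction, so $\mathbf{X}$ is ergodic; interchanging the roles of $\mathbf{X}$ and $\mathbf{Y}$ yields ergodicity of $\mathbf{Y}$. The main obstacle is recognizing the sum map $\Sigma$ as the correct factor: with this choice, the independence of $\mathbf{X}$ and $\mathbf{Y}$ factors the Fourier identity so that $\phi_{\mathbf{Y}}(t)$ appears as a cancellable multiplier, which is precisely how the non-vanishing hypothesis enters the argument.
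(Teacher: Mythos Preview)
Your argument is correct but follows a genuinely different route from the paper's. The paper proceeds by exhibiting a nontrivial convex decomposition of $\mathbb{P}_{\mathbf{X}\oplus\mathbf{Y}}$: given a nontrivial $\mathcal{R}$-invariant set $E$ for $\mathbf{X}$, it writes
\[
\mathbb{P}_{\mathbf{X}\oplus\mathbf{Y}}=\mathbb{P}_{\mathbf{X}}(E)\cdot\mathbb{P}_{(\mathbf{X}\mid E)\oplus\mathbf{Y}}+\bigl(1-\mathbb{P}_{\mathbf{X}}(E)\bigr)\cdot\mathbb{P}_{(\mathbf{X}\mid E^{\complement})\oplus\mathbf{Y}}
\]
as a mixture of two stationary laws, and uses the nonvanishing of the finite-dimensional characteristic functions of $\mathbf{Y}$ to verify that the two components are distinct (cancel the $\mathbf{Y}$-factor in the convolution identity for characteristic functions); this directly contradicts ergodicity of $\mathbf{X}\oplus\mathbf{Y}$ via the standard extremality characterization. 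Your approach is Hilbert-space flavored: you project the lifted invariant function onto $\sigma(\Sigma)$, invoke ergodicity downstairs to force the projection to be constant, and then use totality of cylinder characters in $L^{2}$ to conclude. Both proofs pivot on the identical cancellation of the $\mathbf{Y}$-characteristic-function factor; the paper's is shorter and avoids conditional expectations and the density-of-characters step, while yours makes the factor map $\Sigma$ explicit and isolates more clearly the single point at which ergodicity of the sum enters.
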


\begin{proof}[Proof of \cref{lem:sam}]
Suppose one of \(\mathbf{X}\) and \(\mathbf{Y}\) is not ergodic, say \(\mathbf{X}\). Then there is \(E\in\mathcal{B}^{G}\) with \(0<\mathbb{P}_{\mathbf{X}}\left(E\right)<1\) and \(\mathbb{P}_{\mathbf{X}}\left(R_{g}^{-1}\left(E\right)\triangle E\right)=0\) for every \(g\in G\). Using the independence of \(\mathbf{X}\) and \(\mathbf{Y}\), decompose
\begin{equation}\label{eq:conv}
\mathbb{P}_{\mathbf{X}\oplus\mathbf{Y}}=\mathbb{P}_{\mathbf{X}}\left(E\right)\cdot\mathbb{P}_{\left(\mathbf{X}\mid E\right)\oplus\mathbf{Y}}+\left(1-\mathbb{P}_{\mathbf{X}}\left(E\right)\right)\cdot\mathbb{P}_{\left(\mathbf{X}\mid E^{\complement}\right)\oplus\mathbf{Y}},
\end{equation}
where \(\mathbf{X}\mid E\) and \(\mathbf{X}\mid E^{\complement}\) stand for independent \(G\)-processes whose distributions are the same as that of \(\mathbf{X}\) conditioned on \(E\) and \(E^{\complement}\), respectively. By the invariance of \(E\), both \(\mathbf{X}\mid E\) and \(\mathbf{X}\mid E^{\complement}\) are stationary \(G\)-processes. Additionally, since the finite dimensional characteristic functions of \(\mathbf{Y}\) never vanish, \(\left(\mathbf{X}\mid E\right)\oplus\mathbf{Y}\) and \(\big(\mathbf{X}\mid E^{\complement}\big)\oplus\mathbf{Y}\) cannot be equal in distribution, thus \(\mathbb{P}_{\left(\mathbf{X}\mid E\right)\oplus\mathbf{Y}}\neq\mathbb{P}_{\left(\mathbf{X}\mid E^{\complement}\right)\oplus\mathbf{Y}}\). Consequently, the decomposition \eqref{eq:conv} expresses \(\mathbb{P}_{\mathbf{X}\oplus\mathbf{Y}}\) as a nontrivial convex combination of stationary \(G\)-processes, hence \(\mathbb{P}_{\mathbf{X}\oplus\mathbf{Y}}\) is not ergodic.\footnote{For a proof of this standard fact, see \cite[Thm.~4.4]{einsiedler2011} (while stated for one automorphism of a compact metric space, the same proof is valid in general).}
\end{proof}

\section{Stochastically continuous extensions}\label{sct:stochext}

The following result allows the reduction of the general case in the main theorem to the setting of Polish groups and stochastically continuous stationary processes. This theorem is motivated by an observation we first heard from A. I. Danilenko, namely that the ergodicity of a probability preserving action of an arbitrary group remains unchanged under closure of the image of the group within the automorphism group.

\smallskip

Recall that a {\bf Polish group} is a topological group whose topology is induced from a complete separable metric. When \(G\) is a Polish group, a \(G\)-process \(\mathbf{X}=\left(X_{g}\right)_{g\in G}\) is {\bf stochastically continuous} (or {\bf continuous in probability}) if
\[X_{g}\pto X_{g_{o}}\quad\text{as }g\to g_{o}\text{ for every }g_{o}\in G.\]
Here and later on, \(\pto\) stands for convergence in probability.

\begin{thm}\label{thm:stochext}
Let \(G\) be a group and \(\mathbf{X}=\left(X_{g}\right)_{g\in G}\) a separable in probability stationary \(G\)-process. Then \(\mathbf{X}\) admits a {\bf stochastically continuous extension}: a Polish group \(\widehat{G}\) with a homomorphism \(\tau:G\to\widehat{G}\), and a stationary \(\widehat{G}\)-process \(\widehat{\mathbf{X}}=\big(\widehat{X}_{T}\big)_{T\in\widehat{G}}\), such that:
\begin{enumerate}
    \item \(\tau\left(G\right)\leq\widehat{G}\) is a dense subgroup.
    \item \(\widehat{\mathbf{X}}\) is stochastically continuous.
    \item \(\widehat{\mathbf{X}}\mid_{\tau\left(G\right)}\coloneq\big(\widehat{X}_{\tau\left(g\right)}\big)_{g\in G}\eqd \mathbf{X}\).
\end{enumerate}
Furthermore, whenever \(\widehat{\mathbf{X}}\) is a stochastically continuous extension of \(\mathbf{X}\), one has:
\begin{enumerate}[label=(\roman*), align=left, labelsep=.5em]
    \item \(\mathbf{X}\) is \(G\)-ergodic \(\iff\) \(\widehat{\mathbf{X}}\) is \(\widehat{G}\)-ergodic.
    \item \(\mathbf{X}\) is \(G\)-weakly mixing \(\iff\) \(\widehat{\mathbf{X}}\) is \(\widehat{G}\)-weakly mixing.
\end{enumerate}
\end{thm}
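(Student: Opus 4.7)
The plan is to realize $\widehat{G}$ as the closure of the image of $G$ inside the Polish automorphism group of the canonical probability space of $\mathbf{X}$, and to extend the process by composing a coordinate projection with these automorphisms. Since $\mathbf{X}$ is separable in probability, the sub-$\sigma$-algebra of $\mathcal{B}^G$ generated by $\mathbf{X}$ is countably generated modulo $\mathbb{P}_{\mathbf{X}}$-null sets, so $(\mathbb{R}^G,\mathcal{B}^G,\mathbb{P}_{\mathbf{X}})$ is isomorphic mod null to a standard probability space (Proposition~\ref{prop:stand}). Equip its group $\mathrm{Aut}(\mathbb{P}_{\mathbf{X}})$ of measure-preserving automorphisms with the weak (coarse) topology, which makes it a Polish topological group characterized by $T_n\to T$ iff $f\circ T_n^{-1}\pto f\circ T^{-1}$ for every bounded measurable $f$. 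Define $\tau:G\to\mathrm{Aut}(\mathbb{P}_{\mathbf{X}})$ by $\tau(g):=R_g$ and set $\widehat{G}:=\overline{\tau(G)}$, a Polish group (closed subgroup of a Polish group) in which $\tau(G)$ is dense by construction. Finally, for $T\in\widehat{G}$, declare $\widehat{X}_T:=X_{e_G}\circ T^{-1}$ on $(\mathbb{R}^G,\mathcal{B}^G,\mathbb{P}_{\mathbf{X}})$.

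The three defining properties of the extension are then immediate: (1) holds by construction; stationarity of $\widehat{\mathbf{X}}$ follows from $\widehat{X}_{ST}=\widehat{X}_T\circ S^{-1}$ together with the measure-preserving property of every $S\in\widehat{G}$; (2) follows by applying the defining continuity property of the weak topology to $f=X_{e_G}$ (after truncation if needed); and (3) holds on the nose since $X_{e_G}\circ R_g^{-1}(\omega)=\omega_g=X_g(\omega)$. For the equivalences (i) and (ii), I first note that stochastic continuity together with density of $\tau(G)$ implies that the $\sigma$-algebra generated by $\widehat{\mathbf{X}}$ agrees, up to completion, with the one generated by $\mathbf{X}$; consequently the canonical $\widehat{G}$-system associated with $\widehat{\mathbf{X}}$ is measurably isomorphic to the tautological $\widehat{G}$-action $T\mapsto T$ on $(\mathbb{R}^G,\mathbb{P}_{\mathbf{X}})$. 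For (i), any $\tau(G)$-invariant set $E$ is automatically $\widehat{G}$-invariant: if $T=\lim\tau(g_n)$, then $\mathbf{1}_E\circ T=\lim\mathbf{1}_E\circ\tau(g_n)=\mathbf{1}_E$ in $L^2$, so $T^{-1}E=E$ mod null; the reverse implication is trivial. For (ii), the same closure argument applied to the product shows that the closure of $\{R_g\times R_g:g\in G\}$ in $\mathrm{Aut}(\mathbb{P}_{\mathbf{X}}\otimes\mathbb{P}_{\mathbf{X}})$ equals $\{T\times T:T\in\widehat{G}\}$ (the latter is the continuous image of $\widehat{G}$, hence closed, and contains a dense subset of the former). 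Applying (i) to $\mathbf{X}\otimes\mathbf{X}$ with its extension $\widehat{\mathbf{X}}\otimes\widehat{\mathbf{X}}$ yields the weak mixing equivalence.

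The main obstacle is not a hard calculation but rather the correct deployment of the ambient Polish group framework: one must invoke separability in probability precisely to ensure that $(\mathbb{R}^G,\mathcal{B}^G,\mathbb{P}_{\mathbf{X}})$ is effectively standard, so that $\mathrm{Aut}(\mathbb{P}_{\mathbf{X}})$ is genuinely a Polish topological group and Koopman-type arguments are available; and one must check that convergence in the weak topology passes to the diagonal action on the product space, with the caveat that $\widehat{G}$ need not be isomorphic to the abstract closure of $G$ under any pre-existing topology, only to the closure of its image in $\mathrm{Aut}$. Once this foundation is in place, the equivalences are soft consequences of Danilenko's observation that invariance is preserved under closure within $\mathrm{Aut}$.
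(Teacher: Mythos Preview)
Your approach is essentially identical to the paper's: construct $\widehat{G}$ as the closure of the image of $G$ in the automorphism group of a standard realization of the process (the paper arranges this via Proposition~\ref{prop:stand} and Lemma~\ref{lem:tau}), set $\widehat{X}_T=X_{e_G}\circ T^{-1}$, prove (i) by the density/continuity argument (packaged in the paper as Lemma~\ref{lem:ergdens}), and reduce (ii) to (i) via the product.

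There is one slip worth flagging. In your treatment of (ii), the justification ``the latter is the continuous image of $\widehat{G}$, hence closed'' is not valid: continuous images of Polish groups need not be closed. The diagonal $\{T\times T:T\in\widehat{G}\}$ \emph{is} closed in $\mathrm{Aut}(\mathbb{P}_{\mathbf{X}}\otimes\mathbb{P}_{\mathbf{X}})$, but this needs an argument (if $T_n\times T_n\to U$, testing on sets of the form $A\times\mathbb{R}^G$ shows $(T_n)$ is Cauchy in $\mathrm{Aut}(\mathbb{P}_{\mathbf{X}})$, hence convergent, forcing $U$ into the diagonal). In fact this computation is unnecessary and the paper avoids it: once you note---as your $L^2$ closure argument actually shows via stochastic continuity---that the density argument for (i) works for \emph{any} stochastically continuous extension, (ii) follows immediately from the observation that $\widehat{\mathbf{X}}\otimes\widehat{\mathbf{X}}$, with the \emph{same} $\widehat{G}$ and $\tau$, is a stochastically continuous extension of $\mathbf{X}\otimes\mathbf{X}$, with no need to identify the closure of the diagonal explicitly.
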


\begin{rem}
Recall that for a set \(G\), every Gaussian \(G\)-process \(\mathbf{X}\) induces the so called \emph{canonical metric} on \(G\), which is in fact a pseudo-metric, and is given by
\[d_{\mathbf{X}}\left(g,h\right)\coloneq\mathbb{E}\big[\left|X_{g}-X_{h}\right|^{2}\big]^{1/2},\quad g,h\in G.\]
This is well-studied in Gaussian and \(\alpha\)-stable processes; see \cite[\S1.3]{adler2007random}, \cite[\S2.1--2.3]{talagrand2005generic}. For a general \(G\)-process \(\mathbf{X}\), one can naturally work with the version of the canonical metric given by
\[d_{\mathbf{X}}\left(g,h\right)\coloneq\mathbb{E}\big[\left|X_{g}-X_{h}\right|\wedge 1\big],\quad g,h\in G.\]
Note that \(\mathbf{X}\) is separable in probability precisely when \(d_{\mathbf{X}}\) is separable as a pseudo-metric. When \(G\) is a group and \(\mathbf{X}\) is stationary, then \(d_{\mathbf{X}}\) is invariant as a pseudo-metric. In the setup of \cref{thm:stochext}, one could attempt to construct the Polish group \(\widehat{G}\) as a completion of \(G/d_{\mathbf{X}}\) with respect to \(d_\mathbf{X}\) (which forms a genuine metric on the quotient). Our construction of \(\widehat{G}\) is different, and relies on the Polish structure of the group of automorphism of standard probability spaces.
\end{rem}

\subsection{\emph{Lost in translation}: stationary processes and ergodic theory}

The following part provides necessary background for constructing stochastically continuous extensions and proving \cref{thm:stochext}. The results in this section should be regarded as part of the folklore, at least for locally compact second countable groups, and various treatments of related aspects can be found in the literature. We include here proofs in a generality that we could not find in the literature. Due to the particularly wide spectrum of literature, we will only mention here Maruyama's work~\cite[\S3]{maruyama1966transformations}, and some other references will be given later on.

\subsubsection{Separability in probability}\label{sct:sip}

A \(G\)-process \(\mathbf{X}=\left(X_{g}\right)_{g\in G}\) is \textbf{separable in probability}, if there is a countable set \(S\subseteq G\) such that for all \(g\in G\) there is a sequence \(\left(s_{n}\left(g\right)\right)_{n=1}^{\infty}\) in \(S\) with
\[X_{s_{n}\left(g\right)}\pto X_{g}\quad \text{as }n\to\infty.\]
Recall that a measurable space \(\left(\Omega,\mathcal{F}\right)\) is a {\bf standard Borel space}, if \(\mathcal{F}\) is the Borel \(\sigma\)-algebra of some complete separable metric on \(\Omega\). It is a classical fact that every measurable subset of a standard Borel space is standard Borel subspace; see \cite[\S12.B]{kechris2012classical}. A probability space \(\left(\Omega,\mathcal{F},\mathbb{P}\right)\) is called a {\bf standard probability space} if \(\left(\Omega,\mathcal{F}\right)\) is a standard Borel space.\footnote{In some sources, and unlike our terminology, the term \emph{standard probability space} refers to the completion of a standard Borel space with respect to a probability measure, that is, \emph{Lebesgue space} or \emph{Lebesgue--Rokhlin space}.} When \(\left(\Omega,\mathcal{F},\mathbb{P}\right)\) is a standard probability space, \(L^{0}\left(\Omega,\mathcal{F},\mathbb{P}\right)\) is a complete separable metric space with the metric of stochastic convergence (convergence in probability); see \cite[\S17.F]{kechris2012classical}.

\begin{prop}\label{prop:stand}
Let \(G\) be a set and \(\mathbf{X}\) a \(G\)-process. The following properties are equivalent:
\begin{enumerate}
    \item \(\mathbf{X}\) is separable in probability.
    \item There is a \(G\)-process \(\mathbf{X}^{\prime}\) with \(\mathbf{X}^{\prime}\eqd \mathbf{X}\), and \(\mathbf{X}^{\prime}\) is defined on a standard probability space.
\end{enumerate}
Moreover, in this case, the underlying standard probability space can be chosen so that its \(\sigma\)-algebra is generated by countably many elements of \(\mathbf{X}^{\prime}\).
\end{prop}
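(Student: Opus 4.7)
For $(2) \Rightarrow (1)$: if $\mathbf{X}^{\prime}$ lives on a standard probability space, then $L^{0}$ of that space is a separable metric space under convergence in probability, so $\{X_{g}^{\prime} : g \in G\}$ admits a countable dense subset. Choosing one label $s \in G$ for each element of that subset yields a countable $S \subseteq G$ such that for every $g \in G$ there is a sequence $(s_{n}(g))$ in $S$ with $X_{s_{n}(g)}^{\prime} \pto X_{g}^{\prime}$. Since $\mathbb{P}(|X_{s_{n}(g)} - X_{g}| > \epsilon)$ depends only on the joint distribution of the pair $(X_{s_{n}(g)}, X_{g})$, this convergence transfers to $\mathbf{X}$ via $\mathbf{X} \eqd \mathbf{X}^{\prime}$, so $S$ witnesses separability in probability of $\mathbf{X}$.

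For $(1) \Rightarrow (2)$: let $S \subseteq G$ be a countable witness set, and let $\mathbb{P}_{S}$ denote the pushforward of $\mathbb{P}_{\mathbf{X}}$ under the projection $\mathbb{R}^{G} \to \mathbb{R}^{S}$. Since $S$ is countable, $\mathbb{R}^{S}$ is Polish and $(\mathbb{R}^{S}, \mathcal{B}^{S}, \mathbb{P}_{S})$ is a standard probability space. The plan is to define $\mathbf{X}^{\prime}$ on this space by setting $X_{s}^{\prime}$ equal to the $s$-th coordinate projection for $s \in S$, and, for $g \in G \setminus S$, by letting $X_{g}^{\prime}$ be the $L^{0}$-limit of $(X_{s_{n}(g)}^{\prime})_{n}$. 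The existence of this limit rests on the key observation that the joint distribution of $(X_{s_{n}(g)}^{\prime})_{n}$ coincides with that of $(X_{s_{n}(g)})_{n}$ (both families are indexed in $S$, where the processes agree by construction), so $(X_{s_{n}(g)}^{\prime})_{n}$ inherits the Cauchy-in-probability property from $(X_{s_{n}(g)})_{n}$, and completeness of $L^{0}$ on the standard space yields the limit.

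It then remains to verify that $X_{g}^{\prime}$ is well-defined and that $\mathbf{X}^{\prime} \eqd \mathbf{X}$. Independence from the choice of witness sequence follows by interleaving any two candidate sequences into a single sequence that is still Cauchy in probability in $\mathbf{X}$, hence in $\mathbf{X}^{\prime}$ by the same transfer argument, and then invoking uniqueness of $L^{0}$-limits. For equality of finite-dimensional distributions at $g_{1}, \ldots, g_{n} \in G$, the joint vector $(X_{s_{k}(g_{1})}, \ldots, X_{s_{k}(g_{n})})$ converges coordinatewise in probability, hence in distribution, to $(X_{g_{1}}, \ldots, X_{g_{n}})$; the analogous convergence holds for $(X_{s_{k}(g_{i})}^{\prime})_{i}$, and since the pre-limit distributions agree (all indices lie in $S$), the weak limits coincide. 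The moreover clause is automatic, since $\mathcal{B}^{S}$ is generated by the countable family of coordinate projections $\{X_{s}^{\prime} : s \in S\} \subseteq \mathbf{X}^{\prime}$. The only subtle technical step, which I expect to be the main obstacle, is the transfer of the Cauchy-in-probability property from $\mathbf{X}$ to $\mathbf{X}^{\prime}$, but taking the \emph{full} pushforward measure $\mathbb{P}_{S}$ to $\mathbb{R}^{S}$ (rather than only the marginals at individual $s \in S$) makes this immediate.
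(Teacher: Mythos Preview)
Your proof is correct, and the overall strategy---pushing forward to $(\mathbb{R}^{S},\mathcal{B}^{S})$ with the distribution of $(X_{s})_{s\in S}$, taking coordinate projections for $s\in S$, and recovering the remaining $X_{g}'$ as limits---matches the paper's. The technical execution differs: the paper first works on the original space, passes to an almost-surely convergent subsequence to obtain a pointwise $\sigma(X_{s}:s\in S)$-measurable version of each $X_{g}$, and then invokes the factorization lemma to transport these to $\mathbb{R}^{S}$; you instead push forward first and appeal to completeness of $L^{0}$ on the standard space to produce $X_{g}'$ directly as an $L^{0}$-limit of a Cauchy-in-probability sequence. Your route is slightly more streamlined (no subsequence extraction, no factorization lemma), at the cost of $X_{g}'$ being defined only as an $L^{0}$-class rather than a concrete pointwise function; the paper's route yields an explicit pointwise formula. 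Your transfer-of-Cauchy argument is exactly the right observation, and your verification of $\mathbf{X}'\eqd\mathbf{X}$ via convergence in distribution of the approximating vectors is clean. One small point: in $(2)\Rightarrow(1)$ you are more careful than the paper in noting that separability in probability passes from $\mathbf{X}'$ back to $\mathbf{X}$ because it is a distributional property.
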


\begin{rem}
When \(\mathbf{X}=\left(X_{g}\right)_{g\in G}\) is defined on \(\left(\Omega,\mathcal{F},\mathbb{P}\right)\), \(\mathcal{F}\) is considered generated by countably many elements of \(\mathbf{X}\) if there is a countable set \(S\subseteq G\) with \(\mathcal{F}=\sigma\left(X_{s}:s\in S\right)\) modulo \(\mathbb{P}\).
\end{rem}

\begin{proof}[Proof of \cref{prop:stand}]
If either \(\mathbf{X}=\left(X_{g}\right)_{g\in G}\) itself or any \(G\)-process equal in distribution to \(\mathbf{X}\) is defined on a standard probability space \(\left(\Omega,\mathcal{F},\mathbb{P}\right)\), then since \(L^{0}\left(\Omega,\mathcal{F},\mathbb{P}\right)\) is a separable metric space in the metric of stochastic convergence, then its subset \(\{X_{g}:g\in G\}\subseteq L^{0}\left(\Omega,\mathcal{F},\mathbb{P}\right)\) inherits this property, namely \(\mathbf{X}\) is separable in probability. We then prove the other implication. Let \(\mathbf{X}=\left(X_{g}\right)_{g\in G}\) be defined on \(\left(\Omega,\mathcal{F},\mathbb{P}\right)\), and assume it is separable in probability by a countable set \(S\subseteq G\). Since \(S\) is countable, we may pick pointwise defined versions of \(X_{s}\), which we denote again by \(X_{s}:\Omega\to\mathbb{R}\), simultaneously for every \(s\in S\). Define the countably generated \(\sigma\)-algebra \(\mathcal{S}\coloneq\sigma\left(X_{s}:s\in S\right)\subseteq\mathcal{F}\). Fix an arbitrary \(g\in G\), and define an \(\mathcal{S}\)-measurable function \(X_{g}^{\prime}:\Omega\to\mathbb{R}\) with \(\mathbb{P}\left(X_{g}^{\prime}=X_{g}\right)=1\) as follows.\footnote{While \(\{X_{g}^{\prime}=X_{g}\}\) may not belong to \(\mathcal{S}\), this will not be an issue.} Pick a sequence \(\left(s_{n}\left(g\right)\right)_{n=1}^{\infty}\) in \(S\) such that \(X_{s_{n}\left(g\right)}\xrightarrow{\mathrm{a.s}}X_{g}\) as \(n\to\infty\),\footnote{Every sequence convergent in probability admits a subsequence convergent almost surely.} and let
\[\Omega_{g}\coloneq\left\{ \lim\nolimits_{n\to\infty}X_{s_{n}\left(g\right)}\text{ exists}\right\}.\]
Then \(\Omega_{g}\in\mathcal{S}\) and \(\mathbb{P}\left(\Omega_{g}\right)\geq\mathbb{P}\left(\lim\nolimits_{n\to\infty}X_{s_{n}\left(g\right)}=X_{g}\right)=1\). Define then
\[X_{g}^{\prime}:\Omega\to\mathbb{R},\quad X_{g}^{\prime}\left(\omega\right)\coloneq\begin{cases}
\lim\nolimits_{n\to\infty}X_{s_{n}\left(g\right)}\left(\omega\right) & \omega\in\Omega_{g}\\
0 & \omega\notin\Omega_{g}
\end{cases}.\]
Then \(X_{g}^{\prime}\) is \(\mathcal{S}\)-measurable and \(\mathbb{P}\left(X_{g}^{\prime}=X_{g}\right)\geq\mathbb{P}\left(\left\{ \lim\nolimits_{n\to\infty}X_{s_{n}\left(g\right)}=X_{g}\right\} \cap\Omega_{g}\right)=1\). Therefore, \(\mathbf{X}^{\prime}\coloneq\left(X_{g}^{\prime}\right)_{g\in G}\) is defined on \(\left(\Omega,\mathcal{S},\mathbb{P}\right)\), and \(\mathbf{X}'\eqd\mathbf{X}\) since \(\mathbb{P}\left(X_{g}^{\prime}=X_{g}\right)=1\) for every \(g\in G\). In particular, the distribution of \(\mathbf{X}^{\prime}\) does not depend on the choices of the sequences \(\left(s_{n}\right)_{n=1}^{\infty}\). Consider the standard Borel space \(\left(\mathbb{R}^{S},\mathcal{B}^{S}\right)\) and define
\[ \psi:\Omega\to\mathbb{R}^{S},\quad\psi\left(\omega\right)\coloneq\left(X_{s}^{\prime}\left(\omega\right)\right)_{s\in S}. \]
Then not only \(\psi\) is measurable but \(\mathcal{S}=\psi^{-1}\left(\mathcal{B}^{S}\right)\). Let \(\mathbb{P}^{\prime}\coloneq\mathbb{P}\circ\psi^{-1}\), and then \(\left(\mathbb{R}^{S},\mathcal{B}^{S},\mathbb{P}^{\prime}\right)\) is a standard probability space. For every \(g\in G\), since \(X_{g}^{\prime}\) is \(\mathcal{S}=\sigma\left(\psi\right)\)-measurable, by the factorization lemma (see \cite[Lemma~1.14]{kallenberg2021foundations}) there is a Borel function
\[X_{g}^{\prime\prime}:\mathbb{R}^{S}\to\mathbb{R}\text{ satisfying }X_{g}^{\prime}=X_{g}^{\prime\prime}\circ\psi.\]
Then define \(\mathbf{X}^{\prime\prime}\coloneq\left(X_{g}^{\prime\prime}\right)_{g\in G}\) on \(\left(\mathbb{R}^{S},\mathcal{B}^{S},\mathbb{P}^{\prime}\right)\), and one checks that \(\mathbf{X}^{\prime\prime}\eqd\mathbf{X}^{\prime}\eqd\mathbf{X}\). Finally, \(\mathcal{B}^{S}=\sigma\left(X_{s}^{\prime\prime}:s\in S\right)\) modulo \(\mathbb{P}^{\prime}\) since for \(s\in S\), if we let \(\pi_{s}:\mathbb{R}^{S}\to\mathbb{R}\) be the canonical projection to the \(s\)-coordinate, then \(X_{s}^{\prime\prime}\circ\psi=X_{s}^{\prime}=\pi_{s}\circ\psi\) \(\mathbb{P}\)-a.s.
\end{proof}

\subsubsection{Stochastic continuity}

Suppose \(G\) is a Polish group. A \(G\)-process \(\mathbf{X}=\left(X_{g}\right)_{g\in G}\) is called {\bf stochastically continuous} if
\[X_{g}\pto X_{g_{o}}\quad\text{as }g\to g_{o}\text{ for every }g_{o}\in G.\]
Since \(G\) is Polish and hence separable, stochastic continuity implies separability in probability. Note that when \(\mathbf{X}\) is stationary, it suffices to verify stochastic continuity at one arbitrary \(g_{o}\in G\). In the following we focus exclusively on stationary \(G\)-processes with a Polish group \(G\), and characterize their stochastic continuity through the lens of probability preserving actions. We start by considering a general notion of a probability preserving action following Zimmer \cite[\S3]{zimmer1976extensions}:

\begin{defn}\label{dfn:Zimmer}
Let \(G\) be a Polish group. A {\bf probability preserving action} of \(G\) on a standard probability space \(\left(\Omega,\mathcal{F},\mathbb{P}\right)\), is a measurable map \(\mathbf{T}:G\times\Omega\to\Omega\), \(\mathbf{T}:\left(g,\omega\right)\mapsto T_{g}\left(\omega\right)\), such that:
\begin{enumerate}
    \item \(T_{e_{G}}=\mathrm{Id}_{\Omega}\) \(\mathbb{P}\)-a.s.
    \item \(T_{g}\circ T_{h}=T_{gh}\) \(\mathbb{P}\)-a.s. for all \(g,h\in G\).\footnote{Importantly, the set of probability \(1\) on which this holds may depend on \(g,h\).}
    \item \(\mathbb{P}\left(T_{g}^{-1}\left(A\right)\right)=\mathbb{P}\left(A\right)\) for all \(g\in G\) and \(A\in\mathcal{F}\).
\end{enumerate}
We denote such a probability preserving action by \(\mathbf{T}:G\curvearrowright\left(\Omega,\mathcal{F},\mathbb{P}\right)\).
\end{defn}

In contrast to probability preserving \emph{systems}, probability preserving \emph{actions} deal with Polish groups acting on standard probability spaces, and require a jointly measurable map \(G\times\Omega\to\Omega\). We emphasize that both the acting group being Polish and the underlying probability space being standard are integral to the setup of probability preserving actions. The following proposition appears to be well-known, at least for locally compact abelian groups.

\begin{prop}\label{prop:folk}
Let \(G\) be a Polish group and \(\mathbf{X}=\left(X_{g}\right)_{g\in G}\) a stationary \(G\)-process. The following properties are equivalent:
\begin{enumerate}
    \item \(\mathbf{X}\) is stochastically continuous.
    \item There is a probability preserving action \(\mathbf{T}:G\curvearrowright\left(\Omega,\mathcal{F},\mathbb{P}\right)\) and a distinguished random variable \(X_{o}\in L^{0}\left(\Omega,\mathcal{F},\mathbb{P}\right)\), such that \(\mathbf{X}\eqd\left(X_{o}\circ T_{g^{-1}}\right)_{g\in G}\).\footnote{The presentation \(\big(X_{o}\circ T_{g^{-1}}\big)_{g\in G}\) reflects our choice to work with left-stationarity.}
\end{enumerate}
\end{prop}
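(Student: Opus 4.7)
My plan is to connect Proposition~\ref{prop:folk} to classical results about Polish group actions, passing through the Polish group \(\mathrm{Aut}(\Omega,\mathcal{F},\mathbb{P})\) of measure-preserving automorphisms of the measure algebra equipped with its weak topology. The two ingredients I would use as black boxes are automatic continuity of Borel-measurable homomorphisms between Polish groups, and Mackey's theorem on point realizations of continuous Polish group actions on standard probability spaces.

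For (2)$\Rightarrow$(1), I would argue that joint measurability of \(\mathbf{T}\) makes the induced homomorphism \(g\mapsto T_g\colon G\to\mathrm{Aut}(\Omega,\mathcal{F},\mathbb{P})\) Borel measurable: by Fubini, \(g\mapsto\mathbb{P}(T_g^{-1}A\triangle B)=\int\bigl|\mathbf{1}_A\circ T_g-\mathbf{1}_B\bigr|\,d\mathbb{P}\) is measurable for each \(A,B\in\mathcal{F}\), and since these functions generate the Polish weak topology on \(\mathrm{Aut}\), this yields Borel measurability of \(g\mapsto T_g\). Automatic continuity then upgrades Borel measurability to continuity, and since \(\mathrm{Aut}\) acts continuously on \(L^0(\Omega,\mathcal{F},\mathbb{P})\) by composition, \(g\mapsto X_o\circ T_g\in L^0\) is continuous. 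Composing with the continuous inversion \(g\mapsto g^{-1}\) on \(G\) gives stochastic continuity of \((X_o\circ T_{g^{-1}})_{g\in G}\eqd\mathbf{X}\).

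For (1)$\Rightarrow$(2), I would first apply Proposition~\ref{prop:stand} to realize \(\mathbf{X}\) on a standard probability space \((\Omega,\mathcal{F},\mathbb{P})\) with \(\mathcal{F}=\sigma(X_g:g\in G)\) modulo \(\mathbb{P}\); stochastic continuity together with separability of the Polish group \(G\) makes \(\mathcal{F}\) countably generated. Stationarity then produces a group homomorphism \(g\mapsto\phi_g\) from \(G\) into \(\mathrm{Aut}(\Omega,\mathcal{F},\mathbb{P})\), defined on the generating algebra of cylinders by
\[\phi_g\colon\bigl[X_{h_1}\in E_1,\dotsc,X_{h_n}\in E_n\bigr]\longmapsto\bigl[X_{g^{-1}h_1}\in E_1,\dotsc,X_{g^{-1}h_n}\in E_n\bigr],\]
well-defined and measure-preserving on this algebra by stationarity and extending uniquely to \(\mathcal{F}\) modulo \(\mathbb{P}\). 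I would then set \(X_o\coloneqq X_{e_G}\).

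The decisive step, and the one I expect to be the main obstacle, is showing that \(g\mapsto\phi_g\) is continuous in the weak topology on \(\mathrm{Aut}\), i.e., \(\mathbb{P}(\phi_{g_n}A\triangle\phi_g A)\to 0\) as \(g_n\to g\) for each \(A\) in the generating algebra. For a one-coordinate cylinder \(A=[X_h\in E]\) this amounts to \(\mathbb{P}([X_{g_n^{-1}h}\in E]\triangle[X_{g^{-1}h}\in E])\to 0\). Stochastic continuity together with continuity of multiplication in \(G\) gives \(X_{g_n^{-1}h}\pto X_{g^{-1}h}\), but convergence in probability does not directly transfer to Borel events \(\{\cdot\in E\}\); the remedy is that, by stationarity, all of these random variables share a single common law on \(\mathbb{R}\), so approximating \(\mathbf{1}_E\) in \(L^1\) of that common law by a continuous bounded function and invoking a three-\(\varepsilon\) argument yields the claim, with multi-coordinate cylinders handled analogously. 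Once continuity is established, Mackey's point-realization theorem produces a jointly measurable action \(\mathbf{T}\colon G\times\Omega\to\Omega\) inducing \(\phi\), and the defining identity \(X_h\circ T_g=X_{g^{-1}h}\) a.s., specialized to \(h=e_G\) and \(g\mapsto g^{-1}\), yields \(X_o\circ T_{g^{-1}}=X_g\) a.s., hence \(\mathbf{X}\eqd(X_o\circ T_{g^{-1}})_{g\in G}\).
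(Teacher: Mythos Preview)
Your proposal is correct and follows essentially the same architecture as the paper's proof: realize the process on a standard space via Proposition~\ref{prop:stand}, build the homomorphism into \(\mathrm{Aut}(\Omega,\mathcal{F},\mathbb{P})\) on cylinders (this is the paper's Lemma~\ref{lem:tau}), establish its continuity, and invoke a point-realization result (what you call Mackey's theorem is the paper's Fact~\ref{fct:GTW}, the Glasner--Tsirelson--Weiss observation).

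The only substantive difference is in how continuity is verified. The paper isolates a two-way characterization (Theorem~\ref{thm:Pursley}, due to Pursley/Hoffmann-J\o rgensen): stochastic continuity is equivalent to continuity of all the maps \(g\mapsto\mathbb{P}_{\mathbf{X}}(R_g^{-1}(E)\triangle E)\). It then uses this single lemma in both directions. Your direct arguments---continuity of the \(\mathrm{Aut}\)-action on \(L^0\) for \((2)\Rightarrow(1)\), and the three-\(\varepsilon\) approximation exploiting the common marginal law for \((1)\Rightarrow(2)\)---are correct and amount to proving the relevant halves of Theorem~\ref{thm:Pursley} inline, tailored to the cylinders you need. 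The paper's packaging is cleaner for reuse (Theorem~\ref{thm:Pursley} is invoked again in Lemma~\ref{lem:ergdens}), while your route is slightly more self-contained for this single proposition.
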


\begin{rem}
The conditions in \cref{prop:folk} are essentially equivalent to that \(\mathbf{X}\) is \emph{measurable}, i.e. the map \(G\times\Omega\to\mathbb{R}\), \(\left(g,\omega\right)\mapsto X_{g}\left(\omega\right)\), is jointly measurable. It is well-known that measurable stationary \(G\)-processes are stochastically continuous; see \cite[Lemma~6]{peled2004} (cf. \cite[\S1.3]{samorodnitsky2016stochastic}). Note, however, that this requires an a priori map \(\left(g,\omega\right)\mapsto X_{g}\left(\omega\right)\). Alternatively, one defines measurability of \(\mathbf{X}\) by requiring the set \(\{X_{g}:g\in G\}\subseteq L^{0}\left(\Omega,\mathcal{F},\mathbb{P}\right)\) to be measurable. For separable in probability stationary \(G\)-processes it is essentially the same as measurability.\footnote{Using \cref{prop:stand} and with the help of Lebesgue differentiation theorem, as observed in \cite[pp.~307-308]{glasner2005}.}
\end{rem}

\Cref{prop:folk} is the result of two key ingredients. The first is a characterization of stochastic continuity formulated by Pursley~\cite{pursley1977}, which is closely related to a classical theorem of Hoffmann-J\o rgensen~\cite{hoffmann1973existence} (see also~\cite[\S9.4]{samorodnitsky1994stable}). While these theorems were formulated for \(G=\mathbb{R}\), they can be generalized to Polish groups. The second ingredient is an observation of Glasner, Tsirelson, and Weiss~\cite[pp.~307--308]{glasner2005}, which relies on the automatic continuity of measurable homomorphisms between Polish groups, and the correspondence between probability preserving automorphisms and homomorphisms of measure algebras.

\begin{thm}[Following Hoffmann-J\o rgensen/Pursley]\label{thm:Pursley}
Let \(G\) be a Polish group. A stationary \(G\)-process \(\mathbf{X}\) is stochastically continuous if and only if the following functions are all continuous:\footnote{In stationary processes, this property was called \emph{continuity in the sense of Pinsker} in \cite{pursley1977}. In ergodic theory, it is the weak continuity of the Koopman representation.}
\[G\to\left[0,1\right],\quad g\mapsto\mathbb{P}_{\mathbf{X}}\left(R_{g}^{-1}\left(E\right)\triangle E\right),\quad E\in\mathcal{B}^{G}.\]
\end{thm}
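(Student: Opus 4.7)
The plan is to interpret the stated condition as continuity of the representation $g\mapsto R_{g}$ from $G$ into the measure-preserving automorphism group $\mathrm{Aut}\bigl(\mathbb{R}^{G},\mathbb{P}_{\mathbf{X}}\bigr)$ equipped with its weak topology (where $T_{n}\to T$ means $\mathbb{P}_{\mathbf{X}}(T_{n}^{-1}E\triangle T^{-1}E)\to 0$ for every $E$), while stochastic continuity is continuity of $g\mapsto X_{g}$ into $L^{0}(\mathbb{R}^{G},\mathbb{P}_{\mathbf{X}})$. Since $\mathcal{R}$ is a genuine group action and $\mathbb{P}_{\mathbf{X}}$ is $\mathcal{R}$-invariant, both conditions are right-translation invariant on $G$ and therefore reduce to continuity at $e_{G}$; by stationarity the same reduction applies to $g\mapsto X_{g}$. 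The bridge between the two formulations is that coordinate projections transform covariantly: writing $\pi_{g_{o}}\in L^{0}(\mathbb{R}^{G},\mathbb{P}_{\mathbf{X}})$ for the $g_{o}$-th coordinate, one has $\pi_{g_{o}}\circ R_{g}=X_{g^{-1}g_{o}}$.

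For the forward implication, I fix $E\in\mathcal{B}^{G}$ and aim to show $\mathbb{P}_{\mathbf{X}}(R_{g}^{-1}E\triangle E)\to 0$ as $g\to e_{G}$. Since $\mathcal{B}^{G}$ is generated by finite cylinders and $R_{g}$ preserves $\mathbb{P}_{\mathbf{X}}$, a standard approximation reduces the claim to cylinders $E=\pi_{h_{1},\dotsc,h_{n}}^{-1}(F)$ with $F\subseteq\mathbb{R}^{n}$ Borel. Writing $V_{g}\coloneqq(X_{g^{-1}h_{1}},\dotsc,X_{g^{-1}h_{n}})$, the quantity to bound becomes $\mathbb{E}[|\mathbf{1}_{F}(V_{g})-\mathbf{1}_{F}(V_{e})|]$. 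Stochastic continuity gives $V_{g}\pto V_{e}$ jointly, but $\mathbf{1}_{F}$ is not continuous, so convergence in probability does not pass through it directly. The key device is to approximate $\mathbf{1}_{F}$ in $L^{1}$ with respect to the law of $V_{e}$ by a continuous bounded $f_{k}:\mathbb{R}^{n}\to[0,1]$; by stationarity $V_{g}\eqd V_{e}$, so the same $L^{1}$ error also controls $\mathbb{E}[|\mathbf{1}_{F}(V_{g})-f_{k}(V_{g})|]$, and a triangle inequality combined with bounded convergence applied to $f_{k}(V_{g})\to f_{k}(V_{e})$ closes the argument. I expect this approximation step to be the main technical point, since it uses stationarity in an essential way to neutralise the discontinuity of $\mathbf{1}_{F}$.

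For the reverse implication, assume the condition holds for every $E$. Then $\|\mathbf{1}_{E}\circ R_{g}-\mathbf{1}_{E}\|_{L^{1}(\mathbb{P}_{\mathbf{X}})}\to 0$ as $g\to e_{G}$ for every $E$, and by linearity the same convergence holds for all simple functions. Density of simple functions in $L^{1}$ combined with the isometry $\|f\circ R_{g}\|_{L^{1}}=\|f\|_{L^{1}}$ (which makes the approximation error uniform in $g$) extends this to every $f\in L^{1}(\mathbb{P}_{\mathbf{X}})$; a truncation argument then upgrades it to $f\circ R_{g}\to f$ in probability for every $f\in L^{0}(\mathbb{P}_{\mathbf{X}})$. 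Applying this with $f=\pi_{g_{o}}$ yields $X_{g^{-1}g_{o}}=\pi_{g_{o}}\circ R_{g}\pto\pi_{g_{o}}=X_{g_{o}}$ as $g\to e_{G}$, equivalently $X_{h}\pto X_{g_{o}}$ as $h\to g_{o}$, which is stochastic continuity.
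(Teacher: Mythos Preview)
Your argument is correct and constitutes a genuinely different proof from the paper's. The paper (Appendix~\ref{app:Pursley}) treats the two implications by purely set-theoretic means: for the direction ``condition $\Rightarrow$ stochastic continuity'' it partitions $\mathbb{R}$ into Borel pieces $B_n$ of diameter $<\epsilon$, observes $\{|X_g-X_{g_o}|>\epsilon\}\subseteq\bigcup_n\bigl(Q_g^{-1}E_n\triangle Q_{g_o}^{-1}E_n\bigr)$ for the single-coordinate cylinders $E_n$, and concludes by dominated convergence; for ``stochastic continuity $\Rightarrow$ condition'' it first proves a lemma that the collection $\mathcal{C}_{g_o}$ of sets for which the limit vanishes is a $\sigma$-algebra, and then verifies directly that single-coordinate interval cylinders $E_h^I$ lie in $\mathcal{C}_{g_o}$ via an explicit dyadic-type decomposition of the interval. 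Your route is more functional-analytic: you recast both statements as continuity of the Koopman action on $L^0$ versus on the measure algebra, pass from indicators to $L^1$ to $L^0$ by density and truncation in one direction, and in the other direction approximate $\mathbf{1}_F$ by a continuous bounded function and exploit stationarity ($V_g\eqd V_e$) to make the approximation error uniform in $g$. Your argument is cleaner conceptually and avoids the ad~hoc interval slicing; the paper's argument, on the other hand, never uses group inverses and therefore goes through verbatim for metrizable semigroups (Theorem~\ref{thm:PursleyA}), a generality your substitution $h=g^{-1}g_o$ does not immediately reach.
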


We include in Appendix~\ref{app:Pursley} an elementary proof of \cref{thm:Pursley} essentially due to Pursley. Next, we recall an alternative description of probability preserving actions as put by Glasner, Tsirelson, and Weiss \cite[pp.~307-308]{glasner2005}. For a standard probability space \(\left(\Omega,\mathcal{F},\mathbb{P}\right)\) (and only when it is standard), define the {\bf automorphism group},
\[\mathrm{Aut}\left(\Omega,\mathcal{F},\mathbb{P}\right),\]
to be the group of all equivalence classes of probability preserving automorphisms of \(\left(\Omega,\mathcal{F},\mathbb{P}\right)\), where two such automorphisms are considered to be equivalent if they coincide \(\mathbb{P}\)-a.s. An element of \(\mathrm{Aut}\left(\Omega,\mathcal{F},\mathbb{P}\right)\) is called an {\bf automorphism} of \(\left(\Omega,\mathcal{F},\mathbb{P}\right)\). The group \(\mathrm{Aut}\left(\Omega,\mathcal{F},\mathbb{P}\right)\) becomes a Polish group in the weak topology, defined to be generated by the functions
\[\mathrm{Aut}\left(\Omega,\mathcal{F},\mathbb{P}\right)\to\left[0,1\right],\quad T\mapsto\mathbb{P}\left(T^{-1}\left(A\right)\triangle A\right),\quad A\in\mathcal{F}.\]
Every probability preserving action \(\mathbf{T}:G\curvearrowright\left(\Omega,\mathcal{F},\mathbb{P}\right)\) naturally induces a homomorphism \(\mathbf{T}:G\to\mathrm{Aut}\left(\Omega,\mathcal{F},\mathbb{P}\right)\), which is measurable thanks to the joint measurability of the action, and it is then continuous thanks to the automatic continuity property \cite[\S9.C]{kechris2012classical}. It was observed in \cite[pp.~307-308]{glasner2005}, using the Lebesgue differentiation theorem, that the converse is also true, and thus we get the following:

\begin{fct}[Glasner--Tsirelson--Weiss' observation]\label{fct:GTW}
For every Polish group \(G\) and a standard probability space \(\left(\Omega,\mathcal{F},\mathbb{P}\right)\), giving a probability preserving action \(\mathbf{T}:G\curvearrowright\left(\Omega,\mathcal{F},\mathbb{P}\right)\) (as in \cref{dfn:Zimmer}) is the same as giving a continuous homomorphism \(\mathbf{T}:G\to\mathrm{Aut}\left(\Omega,\mathcal{F},\mathbb{P}\right)\).
\end{fct}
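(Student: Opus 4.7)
I would handle the two directions separately.

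\emph{Action implies continuous homomorphism.} Given a probability preserving action $\mathbf{T}:G\times\Omega\to\Omega$ as in Definition~\ref{dfn:Zimmer}, the assignment $g\mapsto T_g$ is a group homomorphism into $\mathrm{Aut}(\Omega,\mathcal{F},\mathbb{P})$. To see it is Borel measurable, I would fix $A\in\mathcal{F}$ and use joint measurability to place $\{(g,\omega):T_g(\omega)\in A\}$ in $\mathcal{B}(G)\otimes\mathcal{F}$; Fubini then yields Borel measurability of $g\mapsto\mathbb{P}(T_g^{-1}(A)\triangle A)$, which is the generating family for the Polish topology on $\mathrm{Aut}(\Omega,\mathcal{F},\mathbb{P})$. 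Automatic continuity of Borel homomorphisms between Polish groups \cite[\S9.C]{kechris2012classical} promotes Borel measurability to continuity.

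\emph{Continuous homomorphism implies action.} This is the substantive direction. Given a continuous homomorphism $\mathbf{T}:G\to\mathrm{Aut}(\Omega,\mathcal{F},\mathbb{P})$, the plan is to build a jointly Borel measurable realization via Lebesgue differentiation on the $\Omega$-side. First, reduce to $(\Omega,\mathcal{F},\mathbb{P})=([0,1],\mathcal{B},\lambda)$ by a standard Borel isomorphism (the atomic part is handled by a trivial discrete permutation argument). Fix a countable algebra $\{A_n\}_{n\ge 1}$ generating $\mathcal{F}$. For each $n$, the continuity assumption gives a continuous map $g\mapsto\mathbf{1}_{T_g^{-1}(A_n)}\in L^1(\Omega,\mathcal{F},\mathbb{P})$. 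I would then consider the smoothed functions
\[
F_{n,r}(g,\omega)=\frac{1}{2r}\int_{\omega-r}^{\omega+r}\mathbf{1}_{T_g^{-1}(A_n)}(\omega')\,d\lambda(\omega'),\qquad r>0.
\]
Each $F_{n,r}$ is continuous in $g$ (a bounded linear functional of a continuous $L^1$-valued map) and Borel in $\omega$; separability of $G$ upgrades this to joint Borel measurability. By the Lebesgue differentiation theorem, $\phi_n(g,\omega):=\limsup_{r\downarrow 0}F_{n,r}(g,\omega)$ is jointly Borel and, for each fixed $g$, agrees with $\mathbf{1}_{T_g^{-1}(A_n)}$ at $\lambda$-a.e.\ $\omega$. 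Embedding $\omega\mapsto(\mathbf{1}_{A_n}(\omega))_n$ of $\Omega$ into $\{0,1\}^{\mathbb{N}}$ and inverting it on its image, the formula $\mathbf{T}(g,\omega):=\mathrm{(invert)}(\phi_n(g,\omega))_n$ (set to any fixed point off the image) defines a jointly Borel measurable map. The action axioms of Definition~\ref{dfn:Zimmer} then follow because each $g$-slice of the constructed map is a representative of the given $T_g\in\mathrm{Aut}(\Omega,\mathcal{F},\mathbb{P})$.

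\emph{Main obstacle.} The delicate part is the reverse direction: turning a continuous assignment of equivalence classes of pointwise-defined maps into honest joint measurability on $G\times\Omega$. The Lebesgue differentiation theorem on $\Omega$ is the key device, producing canonical pointwise representatives that vary measurably in $g$. Crucially, the regularization is done on $\Omega$ rather than on $G$, so no Haar measure or local-compactness assumption on $G$ is used.
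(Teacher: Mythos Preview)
Your proposal is correct and follows essentially the same route the paper indicates: for the forward direction you invoke joint measurability plus Fubini to get a Borel homomorphism and then automatic continuity \cite[\S9.C]{kechris2012classical}, exactly as the paper does in the paragraph preceding Fact~\ref{fct:GTW}; for the converse you carry out the Lebesgue differentiation argument on the $\Omega$-side that the paper attributes to \cite[pp.~307--308]{glasner2005} without spelling out. The paper treats this fact as a black box with a one-line sketch, so your write-up is in fact more detailed than what appears there; the only cosmetic points to tidy are rounding the $\limsup$ values $\phi_n$ to $\{0,1\}$ before applying the inverse coding, and noting that the generating algebra $\{A_n\}$ can be chosen to separate points so that the coding map into $\{0,1\}^{\mathbb{N}}$ is a Borel isomorphism onto its image.
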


Recall that the \emph{Boolean measure algebra} \(\mathrm{MAlg}\left(\mathcal{F},\mathbb{P}\right)\) consists of equivalence classes (modulo \(\mathbb{P}\)) of elements of \(\mathcal{F}\), which is a complete separable metric space in the metric \(d_{\mathbb{P}}\left(A,A^{\prime}\right)=\mathbb{P}\left(A\triangle A^{\prime}\right)\). In this context we may write \(A\eqp A^{\prime}\) for the occurrence of \(\mathbb{P}\left(A\triangle A^{\prime}\right)=0\). The following is a classical fact; see \cite[\S15.C,\,\S17.F]{kechris2012classical}.

\begin{fct}\label{fct:bool}
Let \(\left(\Omega,\mathcal{F},\mathbb{P}\right)\) be a standard probability space and \(\Phi:\mathrm{MAlg}\left(\mathcal{F},\mathbb{P}\right)\to\mathrm{MAlg}\left(\mathcal{F},\mathbb{P}\right)\) a Boolean isometry.\footnote{A \emph{Boolean} map takes \(0\) to \(0\) (the representative of \(\emptyset\)), and respects complements and countable unions. Note that if an isometry preserves finite unions, then by continuity it preserves also countable union.} Then there is a unique automorphism \(T\in\mathrm{Aut}\left(\Omega,\mathcal{F},\mathbb{P}\right)\) such that
\[\Phi\left(A\right)\eqp T^{-1}\left(A\right)\text{ for every }A\in\mathrm{MAlg}\left(\mathcal{F},\mathbb{P}\right).\]
\end{fct}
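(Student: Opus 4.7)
\emph{Uniqueness.} Since $(\Omega,\mathcal{F})$ is standard Borel, fix a countable family $\{A_n\}_{n\ge 1}\subseteq\mathcal{F}$ that generates $\mathcal{F}$ and separates points of $\Omega$. If $T_1,T_2\in\mathrm{Aut}(\Omega,\mathcal{F},\mathbb{P})$ both satisfy $T_i^{-1}(A)\eqp\Phi(A)$ for every $A$, then off the $\mathbb{P}$-null set $\bigcup_n (T_1^{-1}(A_n)\triangle T_2^{-1}(A_n))$ the points $T_1(\omega)$ and $T_2(\omega)$ lie in the same sets $A_n$, so by separation $T_1(\omega)=T_2(\omega)$. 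Hence $T_1=T_2$ in $\mathrm{Aut}(\Omega,\mathcal{F},\mathbb{P})$.

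\emph{Existence.} The atomic part of $(\Omega,\mathcal{F},\mathbb{P})$ is handled trivially: $\Phi$ permutes atoms according to their masses, yielding a point map there. For the atomless part, the Borel isomorphism theorem \cite[\S15.6]{kechris2012classical} lets me reduce, without loss of generality, to $\Omega=\{0,1\}^{\mathbb{N}}$ with its Borel $\sigma$-algebra. Let $A_n=\{x\in\Omega:x_n=1\}$ and pick a representative $B_n\in\mathcal{F}$ of $\Phi(A_n)\in\mathrm{MAlg}(\mathcal{F},\mathbb{P})$. Define
$$T:\Omega\to\Omega,\qquad T(\omega)=(\mathbf{1}_{B_n}(\omega))_{n\ge 1}.$$
Then $T$ is Borel, the identity $T^{-1}(A_n)=B_n$ holds on the nose, and the family $\{A\in\mathcal{F}:T^{-1}(A)\eqp\Phi(A)\}$ is a $\sigma$-complete Boolean subalgebra of $\mathcal{F}$ containing $\{A_n\}$, so it equals $\mathcal{F}$. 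Since $\Phi$ is isometric in the symmetric-difference metric, $\mathbb{P}(T^{-1}(A))=\mathbb{P}(\Phi(A))=\mathbb{P}(A)$, and $T$ is measure-preserving. Applying the same construction to the Boolean inverse $\Phi^{-1}$ produces a measurable measure-preserving $S:\Omega\to\Omega$; the compositions $S\circ T$ and $T\circ S$ each induce $\mathrm{Id}_{\mathrm{MAlg}(\mathcal{F},\mathbb{P})}$, so by the uniqueness already established they equal $\mathrm{Id}_\Omega$ off null sets, exhibiting $T$ as a bimeasurable bijection between full-measure subsets of $\Omega$.

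\emph{Main obstacle.} The crux is the spatial realization: turning the abstract Boolean datum $\Phi$ into a pointwise map. The reduction to $\Omega=\{0,1\}^{\mathbb{N}}$ is what trivializes this step, because every infinite binary string is automatically a point of $\Omega$, so the formula $T(\omega)=(\mathbf{1}_{B_n}(\omega))_n$ bypasses any ``existence of an intersection point'' issue that a less structured Polish model would create (where one would have to show the prescribed cylinder $\bigcap_{n\in\mathcal{U}_\omega}U_n\cap\bigcap_{n\notin\mathcal{U}_\omega}U_n^c$ is nonempty for a.e.\ $\omega$ by a compactness or completeness argument). The only remaining bookkeeping is to choose the representatives $\{B_n\}$ compatibly so that the Boolean relations among the $A_n$'s pull back on the nose outside a single null set, which is routine since only countably many relations need to be aligned.
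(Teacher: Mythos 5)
The paper does not prove Fact~\ref{fct:bool}: it is stated as a classical fact with a pointer to \cite[\S15.C, \S17.F]{kechris2012classical}, so there is no in-paper argument to compare against. Your proposal is a correct, self-contained rendition of the standard point-realization theorem, and the route you take --- transporting everything to $\{0,1\}^{\mathbb{N}}$ so that $T(\omega)=(\mathbf{1}_{B_n}(\omega))_n$ is automatically a point of the space, then showing $\{A: T^{-1}(A)\eqp\Phi(A)\}$ is a $\sigma$-complete subalgebra containing the generators --- is essentially the argument behind the cited results (cf.\ the realization of $\sigma$-homomorphisms in Kechris 15.9/17.43). Two small points deserve tightening. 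First, you invoke ``the Boolean inverse $\Phi^{-1}$'' without comment; a Boolean isometry need not be surjective a priori, so you should either note that surjectivity is part of the hypothesis (as it is in the paper's only application, Lemma~\ref{lem:tau}, where $\Phi(g)\circ\Phi(g^{-1})=\Phi(g^{-1})\circ\Phi(g)=\mathrm{Id}$ is established before Fact~\ref{fct:bool} is invoked) or supply a surjectivity argument. Second, the last step --- upgrading ``$S\circ T=\mathrm{Id}$ and $T\circ S=\mathrm{Id}$ off null sets'' to a bimeasurable bijection between conull Borel sets, as the paper's definition of an automorphism requires --- is asserted rather than carried out; it is routine (restrict $T$ to the conull set where $S\circ T=\mathrm{Id}$, use Lusin--Souslin to see that the injective Borel map $T$ has Borel image with Borel inverse, and use $T_*\mathbb{P}=\mathbb{P}$ to see the image is conull), but it is the one place where the standard-Borel hypothesis is used beyond the reduction to $\{0,1\}^{\mathbb{N}}$, so it is worth a sentence. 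Your closing remark about choosing the representatives $B_n$ ``compatibly'' is unnecessary: your construction needs no compatibility among the $B_n$, which is precisely the advantage of the Cantor-space model you highlight.
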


\begin{lem}\label{lem:tau}
Let \(G\) be a group and \(\mathbf{X}=\left(X_{g}\right)_{g\in G}\) a stationary \(G\)-process defined on a standard probability space \(\left(\Omega,\mathcal{F},\mathbb{P}\right)\), such that \(\mathcal{F}\) is generated by countably many elements of \(\mathbf{X}\). Then there is a unique homomorphism \(\tau:G\to\mathrm{Aut}\left(\Omega,\mathcal{F},\mathbb{P}\right)\) specified by the property
\[\mathbb{P}\left(X_{h}\circ\tau\left(g\right)=X_{g^{-1}h}\right)=1\text{ for every }g,h\in G.\]
\end{lem}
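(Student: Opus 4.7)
The plan is to construct $\tau(g)$ by first building the inverse Boolean action of $g$ on the measure algebra, $\Phi_g = \tau(g)^{-1}$, and then invoking Fact~\ref{fct:bool} to realize it as an automorphism of $(\Omega,\mathcal{F},\mathbb{P})$. Let $\mathcal{A} \subseteq \mathcal{F}$ denote the subalgebra consisting of finite-dimensional events in $\mathbf{X}$, i.e., sets of the form
\[ A = \bigl\{ (X_{h_1},\dotsc,X_{h_n}) \in B \bigr\}, \quad n \geq 1,\ h_1,\dotsc,h_n \in G,\ B \in \mathcal{B}^{n}. \]
For each $g \in G$ and such $A$, define
\[ \Phi_{g}(A) \coloneqq \bigl\{ (X_{g^{-1}h_1},\dotsc,X_{g^{-1}h_n}) \in B \bigr\}. \]
Well-definedness modulo $\mathbb{P}$ and the isometry property are both immediate consequences of stationarity: if $A$ and $A'$ are two representations of the same event in $\mathcal{A}$, we pad both tuples to a common finite list of indices $(h_1,\dotsc,h_N)$, and stationarity of $\mathbf{X}$ applied to the Borel set $B \triangle B' \subseteq \mathbb{R}^{N}$ yields $\mathbb{P}(\Phi_g(A) \triangle \Phi_g(A')) = \mathbb{P}(A \triangle A') = 0$. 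The same computation shows $\Phi_g$ is an isometry for $d_{\mathbb{P}}$, and it is Boolean by construction.

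Next, I would observe that $\mathcal{A}/\eqp$ is dense in $\mathrm{MAlg}(\mathcal{F},\mathbb{P})$. Indeed, by hypothesis there is a countable $S \subseteq G$ with $\mathcal{F} = \sigma(X_s : s\in S)$ modulo $\mathbb{P}$, and the standard approximation theorem for measure algebras gives that the algebra generated by $\{X_s : s \in S\}$ is $d_{\mathbb{P}}$-dense in $\mathrm{MAlg}(\mathcal{F},\mathbb{P})$; since this algebra is contained in $\mathcal{A}$, so is $\mathcal{A}$ itself. Consequently, $\Phi_g$ extends uniquely by continuity to a Boolean isometry $\Phi_g : \mathrm{MAlg}(\mathcal{F},\mathbb{P}) \to \mathrm{MAlg}(\mathcal{F},\mathbb{P})$ (Boolean operations are $d_{\mathbb{P}}$-continuous, so the extension remains Boolean). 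Applying Fact~\ref{fct:bool} produces a unique $\tau(g) \in \mathrm{Aut}(\Omega,\mathcal{F},\mathbb{P})$ with $\tau(g)^{-1}(A) \eqp \Phi_g(A)$ for every $A \in \mathrm{MAlg}(\mathcal{F},\mathbb{P})$.

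It remains to upgrade the measure-algebra identity into the pointwise statement, verify the homomorphism and uniqueness properties. For the upgrade, applied to cylinders of the form $\{X_h < q\}$ with $q \in \mathbb{Q}$, we get $\{X_h \circ \tau(g) < q\} \eqp \{X_{g^{-1}h} < q\}$, and taking a union over $\mathbb{Q}$ of symmetric differences yields $\mathbb{P}(X_h \circ \tau(g) = X_{g^{-1}h}) = 1$, as required. For the homomorphism property, a direct computation on cylinders gives $\Phi_{g_1 g_2} = \Phi_{g_1} \circ \Phi_{g_2}$ on $\mathcal{A}$, hence on $\mathrm{MAlg}(\mathcal{F},\mathbb{P})$ by continuity; the uniqueness in Fact~\ref{fct:bool} then forces $\tau(g_1 g_2) = \tau(g_1) \circ \tau(g_2)$ in $\mathrm{Aut}(\Omega,\mathcal{F},\mathbb{P})$, and $\tau(e_G) = \mathrm{Id}$ for the same reason. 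Uniqueness of $\tau$ follows because the characterizing property determines $\Phi_g$ on the dense subalgebra $\mathcal{A}$, hence everywhere, hence pins down $\tau(g)$ via Fact~\ref{fct:bool}.

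The only place requiring any real care is the well-definedness of $\Phi_g$ on $\mathcal{A}/\eqp$, where different tuple-representations of the same event must be reconciled; this is handled cleanly by padding to a common finite index set and invoking stationarity, so there is no substantial obstacle beyond bookkeeping.
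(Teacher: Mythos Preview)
Your approach is essentially the same as the paper's: define a Boolean isometry $\Phi_g$ on a dense algebra of cylinder sets via $X_h \mapsto X_{g^{-1}h}$, extend by continuity, and invoke Fact~\ref{fct:bool}. The one methodological difference is that you work with the algebra of \emph{all} finite-dimensional cylinders, whereas the paper restricts to cylinders indexed by the countable generating set $S$; your choice makes the verification of $\mathbb{P}(X_h\circ\tau(g)=X_{g^{-1}h})=1$ for arbitrary $h$ immediate, while the paper has to redo the construction with $S\cup\{h\}$ and appeal to uniqueness. Two small points to correct: (i) the composition identity on cylinders is $\Phi_{g_1}\circ\Phi_{g_2}=\Phi_{g_2 g_1}$, not $\Phi_{g_1 g_2}$ (this is what makes $\tau(g)=\Phi_g^{-1}$ a \emph{homomorphism} rather than an anti-homomorphism); (ii) before applying Fact~\ref{fct:bool} you should note that $\Phi_g$ is bijective, which follows from $\Phi_g\circ\Phi_{g^{-1}}=\Phi_{g^{-1}}\circ\Phi_g=\mathrm{Id}$ on $\mathcal{A}$ and hence on $\mathrm{MAlg}(\mathcal{F},\mathbb{P})$.
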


\begin{proof}[Proof of \cref{lem:tau}]
Fix a countable set \(S\subseteq G\) with \(\mathcal{F}=\sigma\left(X_{s}:s\in S\right)\). Define an algebra \(\mathcal{A}\subset\mathcal{F}\) to consist of finite unions of the sets
\[\left\{X_{s_{1}}\in E_{1},\dotsc,X_{s_{n}}\in E_{n}\right\},\text{ for }s_{1},\dotsc,s_{n}\in S\text{ and }E_{1},\dotsc,E_{n}\in\mathcal{B}.\]
By the assumption \(\mathcal{F}=\sigma\left(\mathcal{A}\right)\), hence the algebra \(\mathcal{A}/\mathbb{P}\) is dense in \(\mathrm{MAlg}\left(\mathcal{F},\mathbb{P}\right)\) and generates it as a Boolean measure algebra. Given \(g\in G\), define \(\Phi\left(g\right):\mathcal{A}/\mathbb{P}\to\mathrm{MAlg}\left(\mathcal{F},\mathbb{P}\right)\) by
\[\Phi\left(g\right):\left\{X_{s_{1}}\in E_{1},\dotsc,X_{s_{n}}\in E_{n}\right\} \mapsto\left\{X_{g^{-1}s_{1}}\in E_{1},\dotsc,X_{g^{-1}s_{n}}\in E_{n}\right\}.\]
One verifies that \(\Phi\left(g\right)\) takes \(0\) to \(0\) and respects complements and finite unions, and that by the stationarity it is an isometry on \(\mathcal{A}/\mathbb{P}\). Then by the continuity it uniquely extends to an isometric Boolean embedding \(\Phi\left(g\right):\mathrm{MAlg}\left(\mathcal{F},\mathbb{P}\right)\to\mathrm{MAlg}\left(\mathcal{F},\mathbb{P}\right)\). Moreover, \(\Phi\left(g\right)\circ\Phi\left(g^{-1}\right)=\mathrm{Id}\) and \(\Phi\left(g^{-1}\right)\circ\Phi\left(g\right)=\mathrm{Id}\) on \(\mathcal{A}/\mathbb{P}\), hence also on \(\mathrm{MAlg}\left(\mathcal{F},\mathbb{P}\right)\), so \(\Phi\left(g\right)\) is a Boolean isometry of \(\mathrm{MAlg}\left(\mathcal{F},\mathbb{P}\right)\). Using \cref{fct:bool}, there is a unique automorphism \(\tau\left(g\right)\in\mathrm{Aut}\left(\Omega,\mathcal{F},\mathbb{P}\right)\) satisfying \(\Phi\left(g\right)\left(\cdot\right)=\tau\left(g\right)^{-1}\left(\cdot\right)\). Let us verify that the desired homomorphism is formed by
\[\tau:G\to\mathrm{Aut}\left(\Omega,\mathcal{F},\mathbb{P}\right),\quad g\mapsto\tau\left(g\right).\]
First, since \(\Phi\left(e_{G}\right)=\mathrm{Id}_{\mathrm{MAlg}\left(\mathcal{F},\mathbb{P}\right)}\) and \(\Phi\left(g_{1}\right)\circ\Phi\left(g_{2}\right)=\Phi\left(g_{2}g_{1}\right)\) for all \(g_{1},g_{2}\in G\) (suffices to be verified on \(\mathcal{A}/\mathbb{P}\)), \(\tau\) is a homomorphism. We then must verify that \(\mathbb{P}\left(X_{h}\circ\tau\left(g\right)=X_{g^{-1}h}\right)=1\) for all \(g,h\in G\). Suppose first \(h\in S\). Then for every \(g\in G\) and an arbitrary \(E\in\mathcal{B}\),
\begin{align*}
\mathbb{P}\left(\left\{X_{h}\circ\tau\left(g\right)\in E\right\} \triangle\left\{X_{g^{-1}h}\in E\right\}\right)
&=\mathbb{P}\big(\tau\left(g\right)^{-1}\left(\left\{ X_{h}\in E\right\}\right)\triangle\left\{X_{g^{-1}h}\in E\right\}\big)\\
&=\mathbb{P}\left(\Phi\left(g\right)\left(\left\{X_{h}\in E\right\} \right)\triangle\left\{X_{g^{-1}h}\in E\right\}\right)=0,
\end{align*}
hence \(\mathbb{P}\left(X_{h}\circ\tau\left(g\right)=X_{g^{-1}h}\right)=1\). Now for \(h\in G\) which might not be in \(S\), for each \(g\in G\) repeat the above construction of \(\tau\left(g\right)\) with \(S\) replaced by \(S\cup\{h\}\). Since \(\mathcal{F}=\sigma\left(X_{s}:s\in S\cup\{h\}\right)\), the uniqueness of each step in the construction yields the same automorphism \(\tau\left(g\right)\). Then the foregoing argument, this time for \(h\in S\cup\{h\}\), applies, hence \(\mathbb{P}\left(X_{h}\circ\tau\left(g\right)=X_{g^{-1}h}\right)=1\).
\end{proof}

\begin{proof}[Proof of \cref{prop:folk}]
\((2)\implies(1)\). Let \(\mathbf{T}:G\curvearrowright\left(\Omega,\mathcal{F},\mathbb{P}\right)\) be a probability preserving action, \(X_{o}\in L^{0}\left(\Omega,\mathcal{F},\mathbb{P}\right)\), and look at \(\mathbf{X}_{o}=\left(X_{g}\right)_{g\in G}\coloneq\left(X_{o}\circ T_{g^{-1}}\right)_{g\in G}\).\footnote{Note that for the purpose of this definition, we may assume that each \(T_{g}\) is defined pointwise, even though the relations \(T_{g}\circ T_{h}=T_{gh}\) hold only almost everywhere.} For every \(E\in\mathcal{B}^{G}\) denote
\[\overline{E}\coloneq\left\{ \omega\in\Omega:\left(X_{h}\left(\omega\right)\right)_{h\in G}\in E\right\}\in\mathcal{F}.\]
Note that \(\mathbb{P}\left(\overline{E}\right)=\mathbb{P}_{\mathbf{X}_{o}}\left(E\right)\) for every \(E\in\mathcal{B}^{G}\), and that the map \(E\mapsto\overline{E}\) is Boolean, hence
\begin{equation}\label{eq:CBid}
\mathbb{P}_{\mathbf{X}_{o}}\left(R_{g}^{-1}\left(E\right)\triangle E\right)=\mathbb{P}\big(\overline{R_{g}^{-1}\left(E\right)\triangle E}\big)=\mathbb{P}\big(\overline{R_{g}^{-1}\left(E\right)}\triangle\overline{E}\big),\quad E\in\mathcal{B}^{G}.
\end{equation}
We now claim that for every \(E\in\mathcal{B}^{G}\) and \(g\in G\) the following relation holds:
\begin{align*}
T_{g}^{-1}\big(\overline{E}\big)
&=\left\{ \omega\in\Omega:\left(X_{o}\left(T_{h^{-1}}\left(T_{g}\left(\omega\right)\right)\right)\right)_{h\in G}\in E\right\}\\
(\ast)&\eqp\left\{ \omega\in\Omega:\left(X_{g^{-1}h}\left(\omega\right)\right)_{h\in G}\in E\right\}\\
&=\left\{ \omega\in\Omega:\left(X_{h}\left(\omega\right)\right)_{h\in G}\in R_{g}^{-1}\left(E\right)\right\}=\overline{R_{g}^{-1}\left(E\right)},
\end{align*}
where \((\ast)\) is because by virtue of \(E\in\mathcal{B}^{G}\), there is some countable set \(S\) in \(G\) such that \(E\in\mathcal{B}^{S}\), hence \(\mathbb{P}\left(\bigcap_{s\in S}\left\{ X_{o}\circ T_{s^{-1}}\circ T_{g}=X_{o}\circ T_{s^{-1}g}=X_{g^{-1}s}\right\} \right)=1\), which readily implies \((\ast)\). Then together with \eqref{eq:CBid} we conclude
\[\mathbb{P}_{\mathbf{X}_{o}}\left(R_{g}^{-1}\left(E\right)\triangle E\right)=\mathbb{P}\big(T_{g}^{-1}\big(\overline{E}\big)\triangle\overline{E}\big).\]
Now by Glasner--Tsirelson--Weiss' observation (\cref{fct:GTW}), the map \(G\to\mathrm{Aut}\left(\Omega,\mathcal{F},\mathbb{P}\right)\), \(g\mapsto T_{g}\), is continuous, and by the definition of the topology of \(\mathrm{Aut}\left(\Omega,\mathcal{F},\mathbb{P}\right)\) this means
\[\mathbb{P}_{\mathbf{X}_{o}}\left(R_{g}^{-1}\left(E\right)\triangle E\right)=\mathbb{P}\big(T_{g}^{-1}\big(\overline{E}\big)\triangle\overline{E}\big)\to 0\quad\text{as }g\to e_{G}.\]
Therefore, from \cref{thm:Pursley} it follows that \(\mathbf{X}\) is stochastically continuous.

\smallskip

\((1)\implies(2)\). Suppose \(\mathbf{X}=\left(X_{g}\right)_{g\in G}\) is stochastically continuous. By \cref{prop:stand}, after replacing \(\mathbf{X}\) with a stationary \(G\)-process equal in distribution, we may assume that \(\mathbf{X}\) is defined on a standard probability space \(\left(\Omega,\mathcal{F},\mathbb{P}\right)\) and \(\mathcal{F}\) is generated by countably many elements of \(\mathbf{X}\). Then \cref{lem:tau} applies, with the additional information that \(G\) is Polish and \(\mathbf{X}\) is stochastically continuous. Consider the homomorphism \(\tau:G\to\mathrm{Aut}\left(\Omega,\mathcal{F},\mathbb{P}\right)\) from \cref{lem:tau}, and we show it is continuous. Fix a countable set \(S\subseteq G\) with \(\mathcal{F}=\sigma\left(X_{s}:s\in S\right)\). Let \(\mathcal{D}\subseteq\mathcal{B}\) be a countable algebra generating \(\mathcal{B}\), and let \(\mathcal{A}\subset\mathcal{F}\) be the algebra of finite unions of sets
\[\left\{X_{s_{1}}\in D_{1},\dotsc,X_{s_{n}}\in D_{n}\right\}\quad\text{ for }s_{1},\dotsc,s_{n}\in S\text{ and }D_{1},\dotsc,D_{n}\in\mathcal{D}.\]
Then \(\sigma\left(\mathcal{A}\right)=\mathcal{F}\) and \(\mathcal{A}/\mathbb{P}\) is dense in \(\mathrm{MAlg}\left(\mathcal{F},\mathbb{P}\right)\). For \(A=\left\{X_{s_{1}}\in D_{1},\dotsc,X_{s_{n}}\in D_{n}\right\}\in\mathcal{A}\) let
\[\overline{A}\coloneq\left\{\left(x_{h}\right)_{h\in G}:x_{s_{1}}\in D_{1},\dotsc,x_{s_{n}}\in D_{n}\right\}\in\mathcal{B}^{G}.\]
Then for every \(A\in\mathcal{A}\) and \(g\in G\), by the defining property of \(\tau\) as in \cref{lem:tau} we have
\[\mathbb{P}\big(\tau\left(g\right)^{-1}\left(A\right)\triangle A\big)=\mathbb{P}_{\mathbf{X}}\big(R_{g}^{-1}\big(\overline{A}\big)\triangle\overline{A}\big)=\mathbb{P}_{\mathbf{X}}\big(R_{g^{-1}}\big(\overline{A}\big)\triangle\overline{A}\big).\]
Then for every \(A\in\mathcal{A}\), by the stochastic continuity of \(\mathbf{X}\) and \cref{thm:Pursley}, the function \(G\to\left[0,1\right]\), \(g\mapsto\mathbb{P}\big(\tau\left(g\right)^{-1}\left(A\right)\triangle A\big)\), is continuous. Since \(\mathcal{A}/\mathbb{P}\) is dense in \(\mathrm{MAlg}\left(\mathcal{F},\mathbb{P}\right)\), the family of functions
\[\mathrm{Aut}\left(\Omega,\mathcal{F},\mathbb{P}\right)\to\left[0,1\right],\quad T\mapsto\mathbb{P}\left(T^{-1}\left(A\right)\triangle A\right),\quad A\in\mathcal{A},\]
generates the topology of \(\mathrm{Aut}\left(\Omega,\mathcal{F},\mathbb{P}\right)\). Hence \(\tau:G\to\mathrm{Aut}\left(\Omega,\mathcal{F},\mathbb{P}\right)\) is continuous.

\smallskip

In light of Glasner--Tsirelson--Weiss' observation (\cref{fct:GTW}), \(\tau\) corresponds to a probability preserving action \(\mathbf{T}:G\curvearrowright\left(\Omega,\mathcal{F},\mathbb{P}\right)\). Denote \(T_{g}=\tau\left(g\right)\), \(g\in G\), for consistency. Put the random variable \(X_{o}\coloneq X_{e_{G}}\in L^{0}\left(\Omega,\mathcal{F},\mathbb{P}\right)\). By the defining property of \(\tau\) in \cref{lem:tau} (with \(h=e_{G}\)),
\[\mathbb{P}\left(X_{o}\circ T_{g^{-1}}=X_{g}\right)=\mathbb{P}\left(X_{e_{G}}\circ\tau\left(g^{-1}\right)=X_{g}\right)=1\text{ for every }g\in G.\qedhere\]
\end{proof}

\subsection{Proof of \cref{thm:stochext}}

Let \(\mathbf{X}=\left(X_{g}\right)_{g\in G}\) be as in the theorem. Since it is separable in probability, then by \cref{prop:stand}, after replacing \(\mathbf{X}\) with a stationary \(G\)-process equal in distribution, we may assume that \(\mathbf{X}\) is defined on a standard probability space \(\left(\Omega,\mathcal{F},\mathbb{P}\right)\) and \(\mathcal{F}\) is generated by countably many elements of \(\mathbf{X}\). Consider the homomorphism \(\tau:G\to\mathrm{Aut}\left(\Omega,\mathcal{F},\mathbb{P}\right)\) obtained in \cref{lem:tau}, and look at the closure of the subgroup \(\tau\left(G\right)\),
\[\widehat{G}\coloneq\mathrm{cl}_{\mathrm{Aut}\left(\Omega,\mathcal{F},\mathbb{P}\right)}\left(\tau\left(G\right)\right)\leq\mathrm{Aut}\left(\Omega,\mathcal{F},\mathbb{P}\right).\]
Since a closure of a subgroup is a closed subgroup, and since a closed subgroup of a Polish group is a Polish group in the subspace topology, \(\widehat{G}\) is a Polish group. We now define
\[\widehat{\mathbf{X}}=\big(\widehat{X}_{T}\big)_{T\in\widehat{G}},\quad\widehat{X}_{T}\coloneq X_{e_{G}}\circ T^{-1},\quad T\in\widehat{G}.\]
(note \(\tau\left(e_{G}\right)=e_{\mathrm{Aut}\left(\Omega,\mathcal{F},\mathbb{P}\right)}=e_{\widehat{G}}\)). Since \(\widehat{G}\) is closed in \(\mathrm{Aut}\left(\Omega,\mathcal{F},\mathbb{P}\right)\), by the Glasner--Tsirelson--Weiss' observation (\cref{fct:GTW}) together with \cref{prop:folk}, \(\widehat{\mathbf{X}}\) is stochastically continuous. It is clear that \(\tau\left(G\right)\) is dense in \(\widehat{G}\). Additionally, by the definition of \(\tau\) as in \cref{lem:tau},
\[\mathbb{P}\big(\widehat{X}_{\tau\left(g\right)}=X_{g}\big)=\mathbb{P}\big(X_{e_{G}}\circ\tau\left(g\right)^{-1}=X_{g}\big)=1\text{ for every }g\in G.\]
This establishes the existence of a stochastically continuous extension. Before proving the last part of the theorem, we formulate a general useful lemma.

\begin{lem}\label{lem:ergdens}
Let \(\Gamma\) be a Polish group, \(G\leq\Gamma\) a dense subgroup, and \(\mathbf{X}=\left(X_{\gamma}\right)_{\gamma\in\Gamma}\) a stochastically continuous stationary \(\Gamma\)-process. Consider the stationary \(G\)-process \(\mathbf{X}\mid_{G}=\left(X_{g}\right)_{g\in G}\). Then:
\begin{enumerate}
    \item \(\mathbf{X}\) is \(\Gamma\)-ergodic \(\iff\) \(\mathbf{X}\mid_{G}\) is \(G\)-ergodic.
    \item \(\mathbf{X}\) is \(\Gamma\)-weakly mixing \(\iff\) \(\mathbf{X}\mid_{G}\) is \(G\)-weakly mixing.
\end{enumerate}
\end{lem}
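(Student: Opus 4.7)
My plan is to realize both $\mathbf{X}$ and $\mathbf{X}\mid_G$ on a common standard probability space carrying a continuous $\Gamma$-action, and then reduce the statement to a continuity-plus-density argument inside the Polish group $\mathrm{Aut}(\Omega,\mathcal{F},\mathbb{P})$.

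First, by Proposition~\ref{prop:stand} I may assume $\mathbf{X}$ is defined on a standard probability space $(\Omega,\mathcal{F},\mathbb{P})$ with $\mathcal{F}$ generated by countably many of the $X_\gamma$. Proposition~\ref{prop:folk} together with Fact~\ref{fct:GTW} then supplies a continuous homomorphism $\tau\colon\Gamma\to\mathrm{Aut}(\Omega,\mathcal{F},\mathbb{P})$ satisfying $X_\gamma\eqp X_{e_\Gamma}\circ\tau(\gamma)^{-1}$. Stochastic continuity of $\mathbf{X}$ combined with density of $G$ in $\Gamma$ yields
\[\sigma(X_g:g\in G)\;=\;\sigma(X_\gamma:\gamma\in\Gamma)\;=\;\mathcal{F}\pmod{\mathbb{P}},\]
because for each $\gamma\in\Gamma$ one can pick $g_n\in G$ with $g_n\to\gamma$, so $X_{g_n}\pto X_\gamma$ and a subsequence converges almost surely. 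Consequently the evaluation maps $\Omega\to\mathbb{R}^\Gamma$ and $\Omega\to\mathbb{R}^G$ both induce measure algebra isomorphisms, intertwining $\mathcal{R}_\Gamma$ on $(\mathbb{R}^\Gamma,\mathcal{B}^\Gamma,\mathbb{P}_\mathbf{X})$ with $\tau$, and the $G$-shifts on $(\mathbb{R}^G,\mathcal{B}^G,\mathbb{P}_{\mathbf{X}\mid_G})$ with $\tau\mid_G$. Hence $\mathbf{X}$ is $\Gamma$-ergodic iff the $\tau$-action on $(\Omega,\mathcal{F},\mathbb{P})$ is ergodic, and $\mathbf{X}\mid_G$ is $G$-ergodic iff its restriction to $G$ is ergodic, on the same space.

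Next, write $\mathcal{I}_\Gamma$ and $\mathcal{I}_G$ for the invariant $\sigma$-algebras of $\tau(\Gamma)$ and $\tau(G)$, respectively. The inclusion $\mathcal{I}_\Gamma\subseteq\mathcal{I}_G$ is trivial; the reverse is where continuity enters. Fix $E\in\mathcal{I}_G$ and consider
\[f_E\colon\Gamma\to[0,1],\qquad f_E(\gamma)\coloneqq\mathbb{P}\bigl(\tau(\gamma)^{-1}(E)\triangle E\bigr).\]
By the very definition of the topology of $\mathrm{Aut}(\Omega,\mathcal{F},\mathbb{P})$ together with continuity of $\tau$, $f_E$ is continuous. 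It vanishes on the dense subgroup $G$, hence identically, so $E\in\mathcal{I}_\Gamma$. Combined with the preceding reduction this gives part~(1).

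For part~(2), the product process $\mathbf{X}\otimes\mathbf{X}$ is again a stochastically continuous stationary $\Gamma$-process, since joint convergence in probability of an independent pair is componentwise. Applying part~(1) to $\mathbf{X}\otimes\mathbf{X}$ and noting that $(\mathbf{X}\otimes\mathbf{X})\mid_G=\mathbf{X}\mid_G\otimes\mathbf{X}\mid_G$, the weak mixing equivalence follows from the definition. The main thing to verify carefully is the measure algebra identification in step one, which rests on the metrizability of the Polish group $\Gamma$ and the subsequence principle for convergence in probability; once these are in hand the rest of the argument is a one-line continuity-of-$f_E$ observation.
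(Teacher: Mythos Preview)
Your proof is correct and rests on the same core idea as the paper's: a set invariant under the dense subgroup $G$ is invariant under all of $\Gamma$ because the map $\gamma\mapsto\mathbb{P}(\cdot^{-1}(E)\triangle E)$ is continuous. The packaging, however, differs. The paper stays entirely on the path space $(\mathbb{R}^\Gamma,\mathcal{B}^\Gamma,\mathbb{P}_\mathbf{X})$: it notes that $\mathcal{B}^\Gamma$ is the $\mathbb{P}_\mathbf{X}$-completion of the sub-$\sigma$-algebra generated by the $G$-coordinates, relates the two shift systems via the coordinate projection $\pi:\mathbb{R}^\Gamma\to\mathbb{R}^G$, and invokes Theorem~\ref{thm:Pursley} directly for the required continuity. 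You instead transfer everything to an abstract standard space via Proposition~\ref{prop:folk} and Fact~\ref{fct:GTW}, and read off continuity from that of the homomorphism $\tau$ into $\mathrm{Aut}(\Omega,\mathcal{F},\mathbb{P})$. Since the proof of Proposition~\ref{prop:folk} itself hinges on Theorem~\ref{thm:Pursley}, the two routes are equivalent; yours is a touch more conceptual, while the paper's avoids the measure-algebra identification step. One minor point worth tightening: Proposition~\ref{prop:folk} as \emph{stated} only yields $\mathbf{X}\eqd(X_o\circ T_{\gamma^{-1}})_{\gamma}$, not the almost-sure identity $X_\gamma=X_{e_\Gamma}\circ\tau(\gamma)^{-1}$ on your chosen $(\Omega,\mathcal{F},\mathbb{P})$; that stronger relation is what Lemma~\ref{lem:tau} provides (and what the proof of Proposition~\ref{prop:folk} actually establishes), so you may want to cite it explicitly.
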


\begin{proof}[Proof of \cref{lem:ergdens}]
Part (2) follows from part (1) by applying it to the diagonal product system, so we prove part (1). Let \(\mathcal{B}^{\Gamma}\) be the product \(\sigma\)-algebra on \(\mathbb{R}^{\Gamma}\), and consider the \(\sigma\)-algebra \(\mathcal{G}\subseteq\mathcal{B}^{\Gamma}\) on \(\mathbb{R}^{\Gamma}\) generated by the coordinates of \(G\). We claim that the \(\mathbb{P}_{\mathbf{X}}\)-completion of \(\mathcal{G}\) coincides with the \(\mathbb{P}_{\mathbf{X}}\)-completion of \(\mathcal{B}^{\Gamma}\). Indeed, for every \(\gamma\in\Gamma\), \(X_{\gamma}\) is a limit in \(\mathbb{P}_{\mathbf{X}}\)-probability of a sequence of elements from \(\{X_{g}:g\in G\}\), and by passing to a subsequence it is a limit \(\mathbb{P}_{\mathbf{X}}\)-a.s. of a sequence of elements from \(\{X_{g}:g\in G\}\). Since each \(\pi_{g}\) is \(\mathcal{G}\)-measurable, \(\pi_{\gamma}\) is measurable with respect to the \(\mathbb{P}_{\mathbf{X}}\)-completion of \(\mathcal{G}\). Since this holds for every \(\gamma\in\Gamma\), all coordinate maps on \(\mathbb{R}^{\Gamma}\) are measurable with respect to the \(\mathbb{P}_{\mathbf{X}}\)-completion of \(\mathcal{G}\), and hence the two completed \(\sigma\)-algebras coincide. Recall that ergodicity is unchanged under taking measure-completion (as mentioned in \cref{sct:ergwm}), hence \(\mathbf{X}\) is \(\Gamma\)-ergodic if and only if the probability preserving \(\Gamma\)-system \(\mathcal{R}=\{R_{\gamma}\}_{\gamma\in\Gamma}\) on \(\left(\mathbb{R}^{\Gamma},\mathcal{G},\mathbb{P}_{\mathbf{X}}\right)\) is ergodic.

\smallskip

We show that this is equivalent to \(\mathbf{X}\mid_{G}\) being \(G\)-ergodic, namely the probability preserving \(G\)-system \(\mathcal{S}=\{S_{g}\}_{g\in G}\) on \(\left(\mathbb{R}^{G},\mathcal{B}^{G},\mathbb{P}_{\mathbf{X}\mid_{G}}\right)\) is ergodic. Look at the projection map
\[\pi:\left(\mathbb{R}^{\Gamma},\mathcal{G},\mathbb{P}_{\mathbf{X}}\right)\to\left(\mathbb{R}^{G},\mathcal{B}^{G},\mathbb{P}_{\mathbf{X}\mid_{G}}\right),\quad\pi:\left(x_{\gamma}\right)_{\gamma\in\Gamma}\mapsto\left(x_{\gamma}\right)_{\gamma\in G}.\]
Then \(\pi^{-1}\left(\mathcal{B}^{G}\right)=\mathcal{G}\), \(\mathbb{P}_{\mathbf{X}}\circ\pi^{-1}=\mathbb{P}_{\mathbf{X}\mid_{G}}\), and \(\pi\circ R_{g}=S_{g}\circ\pi\) for every \(g\in G\). In particular,
\[\mathbb{P}_{\mathbf{X}\mid_{G}}\left(S_{g}^{-1}\left(E\right)\triangle E\right)
=\mathbb{P}_{\mathbf{X}}\left(R_{g}^{-1}\left(\pi^{-1}\left(E\right)\right)\triangle\pi^{-1}\left(E\right)\right),
\quad E\in\mathcal{B}^{G},\,g\in G.\]
Fix \(E\in\mathcal{B}^{G}\) and put \(A\coloneq\pi^{-1}\left(E\right)\in\mathcal{G}\). By stochastic continuity and \cref{thm:Pursley}, the function
\[\Gamma\to\left[0,1\right],\quad \gamma\mapsto\mathbb{P}_{\mathbf{X}}\left(R_{\gamma}^{-1}\left(A\right)\triangle A\right),\]
is continuous. Since \(G\) is dense in \(\Gamma\), it follows that
\[\forall g\in G,\,\,\mathbb{P}_{\mathbf{X}}\left(R_{g}^{-1}\left(A\right)\triangle A\right)=0\iff\forall\gamma\in\Gamma,\,\,\mathbb{P}_{\mathbf{X}}\left(R_{\gamma}^{-1}\left(A\right)\triangle A\right)=0.\]

\smallskip

Combining this with the previous identity, we obtain
\[\forall g\in G,\,\,\mathbb{P}_{\mathbf{X}\mid_{G}}\left(S_{g}^{-1}\left(E\right)\triangle E\right)=0\iff\forall\gamma\in\Gamma,\,\,\mathbb{P}_{\mathbf{X}}\left(R_{\gamma}^{-1}\left(\pi^{-1}\left(E\right)\right)\triangle\pi^{-1}\left(E\right)\right)=0.\]

Since \(\pi^{-1}\left(\mathcal{B}^{G}\right)=\mathcal{G}\), it follows that the \(\mathcal{S}\)-invariant sets in \(\mathcal{B}^{G}\) correspond exactly (via \(\pi^{-1}\)) to the \(\mathcal{R}\)-invariant sets in \(\mathcal{G}\), hence \(\mathcal{R}\) is ergodic if and only if \(\mathcal{S}\) is ergodic.
\end{proof}

We now deduce properties (i) and (ii) in the theorem. For (i), we argue that
\[\begin{aligned}
\mathbf{X}\text{ is }G\text{-ergodic}
&\quad\Longleftrightarrow\quad\widehat{\mathbf{X}}\mid_{\tau\left(G\right)}\text{ is }G\text{-ergodic}\\
&\quad\Longleftrightarrow\quad\widehat{\mathbf{X}}\mid_{\tau\left(G\right)}\text{ is }\tau\left(G\right)\text{-ergodic}\quad\Longleftrightarrow\quad \widehat{\mathbf{X}}\text{ is }\widehat{G}\text{-ergodic}.
\end{aligned}\]
The first equivalence is because the processes are equal in distribution; in the second equivalence we used two ways to view \(\widehat{\mathbf{X}}\mid_{\tau\left(G\right)}\), as a \(G\)-process (translations given by \(R_{g}:\left(x_{\tau\left(h\right)}\right)_{h\in G}\mapsto\left(x_{\tau\left(g^{-1}h\right)}\right)_{h\in G}\)), or as a \(\tau\left(G\right)\)-process (translations given by \(R_{\tau\left(g\right)}\left(x_{\tau\left(h\right)}\right)_{h\in G}\mapsto\big(x_{\tau\left(g\right)^{-1}\tau\left(h\right)}\big)_{h\in G}\)), and both ways categorically coincide (\(\tau\) is a homomorphism); the third equivalence is by \cref{lem:ergdens} with \(\Gamma=\widehat{G}\). For (ii), observe that if \(\widehat{\mathbf{X}}\) is a stochastically continuous extension of \(\mathbf{X}\), then \(\widehat{\mathbf{X}}\otimes\widehat{\mathbf{X}}\) is a stochastically continuous extension of \(\mathbf{X}\otimes\mathbf{X}\). Then by virtue of (i),
\[\begin{aligned}
\mathbf{X}\text{ is }G\text{-weakly mixing}
&\quad\Longleftrightarrow\quad\mathbf{X}\otimes\mathbf{X}\text{ is }G\text{-ergodic} \\
&\quad\Longleftrightarrow\quad \widehat{\mathbf{X}}\otimes\widehat{\mathbf{X}}\text{ is }\widehat{G}\text{-ergodic}\quad\Longleftrightarrow\quad \widehat{\mathbf{X}}\text{ is }\widehat{G}\text{-weakly mixing}.
\end{aligned}\]

The proof of \cref{thm:stochext} is now complete.

\section{Proof of the main theorem}\label{sct:finalproof}

\subsection{Necessary background}\label{sct:necback}

We give some background about infinitely divisible Poissonian stationary processes, with the goal of having Maruyama representation for such processes with countable indexing group. This type of representation exists in the literature also for more general groups, however we will only need the countable case. We refer the interested reader to the works of Kabluchko and Stoev \cite[\S2.2]{kabluchko2016stochastic} and of Rosinski \cite[\S2]{rosinki2018representations}.

\subsubsection{Infinitely divisible Poissonian stationary processes}\label{sct:idp}

A random vector \(V\in\mathbb{R}^{d}\) is called {\bf infinitely divisible}, if for every \(n\in\mathbb{N}\) there are independent identically distributed random vectors \(V_{1}^{\left(n\right)},\dotsc,V_{n}^{\left(n\right)}\) with \(V\eqd V_{1}^{\left(n\right)}+\dotsm+V_{n}^{\left(n\right)}\). As part of the classical L\'{e}vy--Khintchine representation \cite[\S8]{sato1999levy}, \cite[\S3]{samorodnitsky2016stochastic}, every infinitely divisible random vector \(V\) is identically distributed as \(Z\oplus P\) (independent sum), where \(Z\) is a Gaussian random vector and \(P\) is an \emph{infinitely divisible Poissonian} (henceforth {\sc id}p) random vector. This decomposition is unique up to a constant drift, and therefore it is customary to choose the Gaussian random variable \(Z\) to be centered. The L\'{e}vy--Khintchine representation further asserts that the distribution of \(P\) is determined by a {\bf L\'{e}vy measure} and a {\bf constant drift}, and we will get back to that later on.

\smallskip

Let \(G\) be a group. A stationary \(G\)-process \(\mathbf{X}=\left(X_{g}\right)_{g\in G}\) is called {\bf infinitely divisible} if all its finite dimensional distributions are infinitely divisible as random vectors. {\bf Gaussian \(G\)-processes} and {\bf {\sc id}p \(G\)-processes} are defined analogously. Using the uniqueness in the L\'{e}vy–Khintchine decomposition, every infinitely divisible stationary \(G\)-process \(\mathbf{X}\) is identically distributed as \(\mathbf{Z}\oplus\mathbf{P}\) (independent sum), where \(\mathbf{Z}\) is a centered Gaussian stationary \(G\)-process and \(\mathbf{P}\) is an {\sc id}p stationary \(G\)-process (cf. \cite[\S2.2]{rosinki2018representations}).

\subsubsection{Poisson random measures and stochastic integral}\label{sct:Poissint}

Let \(G\) be a countable set. For any Borel \(\sigma\)-finite measure \(\ell\) on \(\mathbb{R}^{G}\), we denote by \(\mathcal{N}_{\ell}\) the Poisson random measure on \(\mathbb{R}^{G}\) with intensity \(\ell\). Thus, \(\mathcal{N}_{\ell}\) is a random measure on \(\mathbb{R}^{G}\) whose distribution is characterized by the following two properties: first, for every Borel set \(A\) in \(\mathbb{R}^{G}\), the random variable \(\mathcal{N}_{\ell}\left(A\right)\) has Poisson distribution with mean \(\ell\left(A\right)\); second, whenever \(A_{1},\dotsc,A_{n}\) are pairwise disjoint Borel sets in \(\mathbb{R}^{G}\), the random variables \(\mathcal{N}_{\ell}\left(A_{1}\right),\dotsc,\mathcal{N}_{\ell}\left(A_{n}\right)\) are independent (see \cite[Prop.~19.4]{sato1999levy}).

\smallskip

The Poisson random measure \(\mathcal{N}_{\ell}\) can be realized as a random outcome of the standard probability space \(\big(\left(\mathbb{R}^{G}\right)^{\ast},\left(\mathcal{B}^{G}\right)^{\ast},\ell^{\ast}\big)\), where \(\left(\mathbb{R}^{G}\right)^{\ast}\) is a standard Borel space consisting of counting measures on \(\mathbb{R}^{G}\), the \(\sigma\)-algebra \(\left(\mathcal{B}^{G}\right)^{\ast}\) is generated by the evaluation maps \(N_{A}:\theta\mapsto\theta\left(A\right)\), \(A\in\mathcal{B}^{G}\), and the probability measure \(\ell^{\ast}\) is determined by that the family \(\{N_{A}:A\in\mathcal{B}^{G}\}\) forms a Poisson point process; for technical details see \cite[\S3]{avraham2025poissonian} and the references therein.

\smallskip

For a measurable function \(f:\mathbb{R}^{G}\to\mathbb{R}\) satisfying \(\int_{\mathbb{R}^{G}}\left|f\right|^{2}\wedge 1 d\ell<+\infty\) (where \(x\wedge y\) denotes the minimum of two nonnegative numbers \(x\) and \(y\)), define the {\bf Poisson stochastic integral} of \(f\) to be the limit in probability
\begin{equation}\label{eq:plim}
I_{\ell}\left(f\right)\coloneq\mathrm{plim}_{\epsilon\searrow 0}\Big(\int_{\mathbb{R}^{G}}f\cdot\mathbf{1}_{\left\{\left| f\right|>\epsilon\right\}}d\mathcal{N}_{\ell}-\int_{\mathbb{R}^{G}}f\cdot\mathbf{1}_{\left\{\epsilon\leq\left|f\right|\leq 1\right\} }d\ell\Big).
\end{equation}
Then \(I_{\ell}\left(f\right)\) is a well defined {\sc id}p random variable, whose distribution is specified by
\[\log\mathbb{E}\left(\exp\left(itI_{\ell}\left(f\right) \right)\right)=\int_{\mathbb{R}^{G}}\big(e^{itf\left(\mathbf{x}\right)}-1-itf\left(\mathbf{x}\right)\cdot\mathbf{1}_{\{\left|f\left(\mathbf{x}\right)\right|\leq 1\}}\big)d\ell\left(\mathbf{x}\right),\quad t\in\mathbb{R}.\]
For more on the definition of the Poisson stochastic integral see \cite[\S1]{maruyama1970infinitely}, \cite[\S8]{sato1999levy}, \cite[\S2.2]{rosinki2018representations}.

\subsubsection{The Maruyama representation}\label{sct:Maruyama}

Assume now that \(G\) is a countable group, and denote by \(\mathcal{R}=\{R_{g}\}_{g\in G}\) the action by (left) translations of \(G\) on \(\mathbb{R}^{G}\). For \(g\in G\), let the canonical projection
\[\Xi_{g}:\mathbb{R}^{G}\to\mathbb{R},\quad\Xi_{g}:\left(x_{h}\right)_{h\in G}\mapsto x_{g}.\]
Then one has the following recipe to produce {\sc id}p stationary \(G\)-processes. Pick a translation invariant {\bf L\'{e}vy measure} \(\ell\) on \(\mathbb{R}^{G}\), thus \(\ell\) is a Borel \(\sigma\)-finite measure satisfying
\[\ell\left(\{\mathbf{0}\}\right)=0\text{ and }\int_{\mathbb{R}^{G}}\left|\Xi_{g}\left(\mathbf{x}\right)\right|^{2}\wedge1\,d\ell\left(\mathbf{x}\right)<+\infty\text{ for all }g\in G,\]
where \(\mathbf{0}=\left(0\right)_{g\in G}\). Pick also a constant {\bf drift}, \(b\in\mathbb{R}\). Now define
\[\mathbf{X}=\big(X_{g}\big)_{g\in G}\text{ by letting }X_{g}\coloneq I_{\ell}\left(\Xi_{g}\right)+b\text{ for }g\in G.\]
Then \(\mathbf{X}\) is an {\sc id}p stationary \(G\)-process. By virtue of the Maruyama representation (extending the L\'{e}vy--Khintchine representation of {\sc id}p vectors), every {\sc id}p stationary \(G\)-process is equal in distribution to a process constructed that way; see \cite[\S4]{maruyama1970infinitely}, \cite[Ch.~4]{sato1999levy}, \cite[\S II]{rajput1989spectral}, \cite[Prop.~2.10]{rosinki2018representations}.

\smallskip

In order to study ergodicity and weak mixing of \(\mathbf{X}\), it is important to realize the distribution of \(\mathbf{X}\) using an appropriate Poissonian action (for a full account of this construction see \cite[\S 3-4]{avraham2025poissonian}). Thus look at the measure preserving \(G\)-action \(\mathcal{R}:G\curvearrowright\left(\mathbb{R}^{G},\mathcal{B}^{G},\ell\right)\). The associated {\bf Poisson suspension} is the probability preserving \(G\)-action
\[\mathcal{R}^{\ast}:G\curvearrowright\big(\left(\mathbb{R}^{G}\right)^{\ast},\left(\mathcal{B}^{G}\right)^{\ast},\ell^{\ast}\big),\]
where the action \(\mathcal{R}^{\ast}=\{R_{g}^{\ast}\}_{g\in G}\) is defined by
\[R_{g}^{\ast}\left(\theta\right)\left(A\right)\coloneq\theta\left(R_{g}^{-1}\left(A\right)\right),\quad g\in G.\]

\smallskip

For every \(f\) whose Poisson stochastic integral \(I_{\ell}\left(f\right)\) as in \eqref{eq:plim} is defined, one has
\begin{equation}\label{eq:equivar}
I_{\ell}\left(f\circ R_{g}\right)=I_{\ell}\left(f\right)\circ R_{g}^{\ast}\quad\ell^{\ast}\text{-a.s. for every }g\in G.
\end{equation}
Indeed, it suffices to look at \(f=\mathbf{1}_{A}\) with \(0<\ell\left(A\right)<+\infty\), where by the \(R_{g}\)-invariance of \(\ell\),
\begin{align*}
I_{\ell}\left(\mathbf{1}_{A}\circ R_{g}\right)=I_{\ell}\big(\mathbf{1}_{R_{g}^{-1}\left(A\right)}\big)
&=N_{R_{g}^{-1}\left(A\right)}-\ell\left(R_{g}^{-1}\left(A\right)\right)\\
&=N_{A}\circ R_{g}^{\ast}-\ell\left(A\right)=I_{\ell}\left(\mathbf{1}_{A}\right)\circ R_{g}^{\ast}\quad\ell^{\ast}\text{-a.s.}
\end{align*}
Now if \(\mathbf{X}\) is any {\sc id}p stationary \(G\)-process, by virtue of the Maruyama representation there exists a L\'{e}vy measure \(\ell\) on \(\mathbb{R}^{G}\) and a drift \(b\in\mathbb{R}\), such that
\[\mathbf{X}=\left(X_{g}\right)_{g\in G}=\left(I_{\ell}\left(\Xi_{g}\right)+b\right)_{g\in G}.\]
Let the random variables \(X_{o}\coloneq I_{\ell}\left(\Xi_{e_{G}}\right)\). Since \(\Xi_{g}=\Xi_{e_{G}}\circ R_{g^{-1}}\) and by \eqref{eq:equivar}, we may write
\begin{equation}\label{eq:genfo}
\mathbf{X}=\left(X_{g}\right)_{g\in G}=\left(I_{\ell}\left(\Xi_{e_{G}}\circ R_{g^{-1}}\right)+b\right)_{g\in G}=\big(X_{o}\circ R_{g^{-1}}^{\ast}+b\big)_{g\in G}.
\end{equation}

\subsection{Proof of the main theorem}\label{sct:proof}

We first use \cref{thm:stochext} and \cref{lem:ergdens} to reduce the proof to the case of countable groups.

\begin{prop}\label{prop:reduct}
It suffices to prove the main theorem for countable (untopologized) groups.
\end{prop}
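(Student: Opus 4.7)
The plan is to invoke Theorem~\ref{thm:stochext} to replace \(\mathbf{X}\) by a stochastically continuous stationary \(\widehat{G}\)-process \(\widehat{\mathbf{X}}\) over a Polish group \(\widehat{G}\), and then invoke Lemma~\ref{lem:ergdens} to restrict further to a countable dense subgroup of \(\widehat{G}\). Along the way I need to check that infinite divisibility survives both reductions, so that the hypotheses of the main theorem for countable groups are available at the end of the chain.

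First, starting from a separable in probability ergodic infinitely divisible stationary \(G\)-process \(\mathbf{X}\), I would apply Theorem~\ref{thm:stochext} to get a Polish group \(\widehat{G}\), a homomorphism \(\tau:G\to\widehat{G}\) with dense image, and a stochastically continuous stationary \(\widehat{G}\)-process \(\widehat{\mathbf{X}}\) extending \(\mathbf{X}\). Property (i) transfers ergodicity to \(\widehat{\mathbf{X}}\), and by property (ii) it would suffice to prove \(\widehat{\mathbf{X}}\) is \(\widehat{G}\)-weakly mixing. Next I would verify that \(\widehat{\mathbf{X}}\) is itself infinitely divisible: for any \(T_{1},\dotsc,T_{n}\in\widehat{G}\), pick sequences \(\bigl(g_{k}^{(j)}\bigr)_{k}\subseteq G\) with \(\tau\bigl(g_{k}^{(j)}\bigr)\to T_{j}\); by stochastic continuity of \(\widehat{\mathbf{X}}\), the vectors \(\bigl(X_{g_{k}^{(1)}},\dotsc,X_{g_{k}^{(n)}}\bigr)\) converge in probability, hence in distribution, to \(\bigl(\widehat{X}_{T_{1}},\dotsc,\widehat{X}_{T_{n}}\bigr)\); since each approximating vector is infinitely divisible and the class of infinitely divisible laws on \(\mathbb{R}^{n}\) is closed under weak convergence (\cite[Lemma~7.8]{sato1999levy}), the limit is infinitely divisible.

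Second, I would use that Polish groups are separable to pick a countable dense subset \(D\subseteq\widehat{G}\), and let \(G_{0}\) be the countable subgroup generated by \(D\), which is still dense in \(\widehat{G}\). The restriction \(\widehat{\mathbf{X}}\mid_{G_{0}}\) is stationary over \(G_{0}\), infinitely divisible (infinite divisibility of finite-dimensional marginals is inherited trivially under restriction), and separable in probability since \(G_{0}\) is countable. Now Lemma~\ref{lem:ergdens} applied with \(\Gamma=\widehat{G}\) gives the equivalences of \(\widehat{G}\)-ergodicity with \(G_{0}\)-ergodicity and of \(\widehat{G}\)-weak mixing with \(G_{0}\)-weak mixing. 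Granting the main theorem for countable groups, \(G_{0}\)-ergodicity of \(\widehat{\mathbf{X}}\mid_{G_{0}}\) upgrades to \(G_{0}\)-weak mixing, which via the chain of equivalences propagates to \(\widehat{G}\)-weak mixing of \(\widehat{\mathbf{X}}\) and thence to \(G\)-weak mixing of \(\mathbf{X}\).

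There is no genuine obstacle here; the only nontrivial bookkeeping is checking that infinite divisibility is preserved under the stochastically continuous extension, which reduces to the classical closedness of the class of infinitely divisible distributions under weak limits.
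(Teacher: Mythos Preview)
Your proposal is correct and follows essentially the same route as the paper: apply Theorem~\ref{thm:stochext} to pass to a stochastically continuous extension over a Polish group, observe that infinite divisibility is preserved because the class of infinitely divisible laws is closed under weak limits (\cite[Lemma~7.8]{sato1999levy}), and then invoke Lemma~\ref{lem:ergdens} to descend to a countable dense subgroup. The paper's own proof is organized identically, only phrased slightly more tersely and stated for the {\sc id}p class rather than general infinitely divisible processes.
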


\begin{proof}[Proof of \cref{prop:reduct}]
Recall that, much like Gaussian random variables, a random variable which is a limit in probability of {\sc id}p random variables is itself {\sc id}p \cite[Lemma~7.8]{sato1999levy}. Therefore, when taking the stochastically continuous extension as in \cref{thm:stochext}, one generally also has that a stationary \(G\)-process \(\mathbf{X}\) is {\sc id}p if and only if its stochastically continuous extension \(\widehat{\mathbf{X}}\) is {\sc id}p. Consequently, by passing to the stochastically continuous extension and using the second part of \cref{thm:stochext}, it suffices to prove the main theorem for Polish groups under stochastic continuity. Now, by passing to a countable dense subgroup (which every Polish group possesses: take the subgroup generated algebraically by any countable dense subset) and restricting the process to this countable group of indices, we obtain an {\sc id}p stationary process indexed by a countable group. By \cref{lem:ergdens}, the ergodicity and weak mixing of this process are equivalent, respectively, to the ergodicity and weak mixing of the original {\sc id}p stationary process, and this readily implies that it suffices to prove the main theorem for countable groups.
\end{proof}

In light of \cref{prop:reduct}, for the rest of the discussion we assume \(G\) is a countable group. Let us recall two essential ingredients we use in our proof. The first is the characterization of ergodicity of Poisson suspensions, proved by Marchat \cite{marchat1978} for \(G=\mathbb{Z}\) (different proofs were given later in \cite[Thm.~1]{grabinsky1984poisson} and \cite[\S4]{roy2007ergodic}), and in \cite[Thm.~2]{avraham2025poissonian} for Polish groups (in particular countable).\footnote{Although it is stated there for nonatomic measures, it is true for general measures; see \cite[Rem.~5.9]{avraham2025poissonian}.} Thus, for every measure preserving \(G\)-action \(\mathcal{R}:G\curvearrowright\left(\mathbb{R}^{G},\mathcal{B}^{G},\ell\right)\) with Poisson suspension \(\mathcal{R}^{\ast}:G\curvearrowright\big(\left(\mathbb{R}^{G}\right)^{\ast},\left(\mathcal{B}^{G}\right)^{\ast},\ell^{\ast}\big)\), the following are equivalent:
\begin{enumerate}
    \item \(\mathcal{R}\) is {\bf null}: no \(A\in\mathcal{B}^{G}\) with \(0<\ell\left(A\right)<+\infty\) satisfies \(\ell\left(R_{g}^{-1}\left(A\right)\triangle A\right)=0\) for all \(g\in G\).
    \item \(\mathcal{R}^{\ast}\) is ergodic.
    \item \(\mathcal{R}^{\ast}\) is weakly mixing.
\end{enumerate}

\medskip

The second is the way ergodicity of an {\sc id}p stationary process reflects on its L\'{e}vy measure:

\begin{lem}\label{lem:null}
The L\'{e}vy measure of every ergodic non-constant {\sc id}p stationary \(G\)-process is null: it admits no invariant set of positive finite measure.
\end{lem}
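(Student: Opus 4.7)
The plan is to argue by contrapositive: I assume that $\ell$ admits an $\mathcal{R}$-invariant set $A \in \mathcal{B}^{G}$ with $0 < \ell(A) < +\infty$ and that $\mathbf{X}$ is non-constant, and deduce that $\mathbf{X}$ is not ergodic. The main step is to split the L\'{e}vy measure along $A$ and apply Lemma~\ref{lem:sam} to reduce the question to the finite-measure piece, where a compound Poisson realization exhibits a nontrivial invariant set explicitly.

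First, split $\ell = \ell_{1} + \ell_{2}$ with $\ell_{1} \coloneqq \ell|_{A}$ (finite) and $\ell_{2} \coloneqq \ell|_{A^{\complement}}$; both are $\mathcal{R}$-invariant L\'{e}vy measures. Since the Poisson random measures on the disjoint sets $A$ and $A^{\complement}$ are independent, and using the Maruyama representation \eqref{eq:genfo}, we obtain a decomposition in law
\[ \mathbf{X} \eqd \mathbf{Y} \oplus \mathbf{Z}, \quad \text{where } Y_{g} \coloneqq I_{\ell_{1}}(\Xi_{g}), \quad Z_{g} \coloneqq I_{\ell_{2}}(\Xi_{g}) + b, \]
and both $\mathbf{Y}$ and $\mathbf{Z}$ are {\sc id}p stationary $G$-processes by the $\mathcal{R}$-invariance of $\ell_{1}, \ell_{2}$. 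Since {\sc id}p characteristic functions are exponentials and hence never vanish, Lemma~\ref{lem:sam} forces $\mathbf{Y}$ to be ergodic whenever $\mathbf{X}$ is. It therefore suffices to contradict the ergodicity of $\mathbf{Y}$.

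Here the finiteness of $\ell_{1}$ pays off: realize $\mathcal{N}_{\ell_{1}} = \sum_{i=1}^{N} \delta_{\mathbf{V}_{i}}$ with $N \sim \mathrm{Poisson}(\ell_{1}(\mathbb{R}^{G}))$ and $\mathbf{V}_{i}$ i.i.d.\ of law $\ell_{1}/\ell_{1}(\mathbb{R}^{G})$. Because $\ell_{1}(\{\mathbf{0}\}) = 0$, one has $\mathbf{V}_{i} \neq \mathbf{0}$ almost surely. The limit in \eqref{eq:plim} converges absolutely and collapses to
\[ Y_{g} = \sum_{i=1}^{N}(V_{i})_{g} - c, \qquad c \coloneqq \int_{\mathbb{R}^{G}} \Xi_{e_{G}} \cdot \mathbf{1}_{\{|\Xi_{e_{G}}| \leq 1\}} \, d\ell_{1}, \]
where the compensator $c$ is the same for every $g$ thanks to the $\mathcal{R}$-invariance of $\ell_{1}$ (giving $\int \Xi_{g} \mathbf{1}_{\{|\Xi_{g}| \leq 1\}} d\ell_{1} = c$ for all $g$).

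Finally, consider the constant-path set $E_{0} \coloneqq \{(y_{g})_{g \in G} \in \mathbb{R}^{G} : y_{g} = -c \text{ for all } g \in G\}$. Since the translations $R_{g}$ merely permute coordinates, $E_{0}$ is $\mathcal{R}$-invariant. On $\{N = 0\}$, of probability $e^{-\ell_{1}(\mathbb{R}^{G})} > 0$, we have $\mathbf{Y} \in E_{0}$. On $\{N = 1\}$, of probability $\ell_{1}(\mathbb{R}^{G}) e^{-\ell_{1}(\mathbb{R}^{G})} > 0$, membership in $E_{0}$ would force $\mathbf{V}_{1} = \mathbf{0}$, which is excluded. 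Hence $0 < \mathbb{P}_{\mathbf{Y}}(E_{0}) < 1$, contradicting the ergodicity of $\mathbf{Y}$. The main subtlety in the argument is keeping the bookkeeping clean, in particular verifying that the compensator $c$ is independent of $g$ and that the $\{N = 0\}$ and $\{N = 1\}$ events provide the two-sided bound on $\mathbb{P}_{\mathbf{Y}}(E_{0})$; everything else is a direct consequence of the splitting and Lemma~\ref{lem:sam}.
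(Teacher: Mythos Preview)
Your proof is correct and follows essentially the same route as the paper: split the L\'{e}vy measure along the invariant set, reduce via Lemma~\ref{lem:sam} to the finite-measure piece, and exhibit the constant-path event (arising from $\{N=0\}$ in the compound Poisson realization) as a nontrivial invariant set. The only cosmetic difference is in bounding $\mathbb{P}_{\mathbf{Y}}(E_0)<1$: the paper appeals to non-constancy of a process with nonzero L\'{e}vy measure, whereas you compute directly on $\{N=1\}$.
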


\begin{proof}[Proof of \cref{lem:null}]
Let \(\mathbf{X}\) be a non-constant {\sc id}p stationary \(G\)-process with L\'{e}vy measure \(\ell\). Suppose there is \(A\in\mathcal{B}^{G}\) with \(0<\ell\left(A\right)<+\infty\) such that \(\ell\left(R_{g}^{-1}\left(A\right)\triangle A\right)=0\) for every \(g\in G\), and we prove \(\mathbf{X}\) is non-ergodic. By the Maruyama representation \eqref{eq:genfo}, we may assume that
\[\mathbf{X}=\left(X_{g}\right)_{g\in G}=\big(X_{o}\circ R_{g^{-1}}^{\ast}+b\big)_{g\in G},\text{ where }X_{o}=I_{\ell}\left(\Xi_{e_{G}}\right).\]
Define the L\'{e}vy measures \(\ell^{\prime}\left(\cdot\right)\coloneq\ell\left(\cdot\cap A\right)\) and \(\ell^{\prime\prime}\left(\cdot\right)\coloneq\ell\big(\cdot\cap A^{\complement}\big)\), put the random variables
\[X_{o}^{\prime}\coloneq I_{\ell^{\prime}}\left(\Xi_{e_{G}}\right)\,\text{ and }\,X_{o}^{\prime\prime}\coloneq I_{\ell^{\prime\prime}}\left(\Xi_{e_{G}}\right),\]
and consider the {\sc id}p stationary \(G\)-processes
\[\mathbf{X}^{\prime}\coloneq\big(X_{o}^{\prime}\circ R_{g^{-1}}^{\ast}\big)_{g\in G}\,\text{ and }\,\mathbf{X}^{\prime\prime}\coloneq\big(X_{o}^{\prime\prime}\circ R_{g^{-1}}^{\ast}+b\big)_{g\in G}.\footnote{If \(\ell\big(A^{\complement}\big)=0\) then \(\mathbf{X}^{\prime\prime}\) is a constant process. This does not hurt the proof.}\]
Since \(\ell=\ell^{\prime}+\ell^{\prime\prime}\) while the supports of \(\ell^{\prime}\) and \(\ell^{\prime\prime}\) are disjoint and \(G\)-invariant, we have that
\[\mathbf{X}\eqd\mathbf{X}^{\prime}\oplus\mathbf{X}^{\prime\prime}.\]
It follows from \cref{lem:sam} that, in order to show that \(\mathbf{X}\) is non-ergodic, it suffices to show that \(\mathbf{X}^{\prime}\) is non-ergodic. Then for the rest of the proof we will show that \(\mathbf{X}^{\prime}\) is non-ergodic, precisely because its L\'{e}vy measure, \(\ell^{\prime}\), has a finite positive total mass, \(\lambda\coloneq\ell^{\prime}\left(\mathbb{R}^{G}\right)=\ell\left(A\right)\).

\smallskip

Since \(\ell^{\prime}\) is finite, each of the two components of \(X_{o}^{\prime}=I_{\ell^{\prime}}\left(\Xi_{e_{G}}\right)\) as in \eqref{eq:plim} converges, and thus
\[X_{o}^{\prime}=\int_{\mathbb{R}^{G}}\Xi_{e_{G}}d\mathcal{N}_{\ell^{\prime}}-\int_{\mathbb{R}^{G}}\Xi_{e_{G}}\cdot\mathbf{1}_{\small\{ \small|\Xi_{e_{G}}\small|\leq1\small\}}d\ell^{\prime}\coloneq P_{o}-c_{o},\]
where \(P_{o}\) is an {\sc id}p random variable and \(c_{o}\) is a constant. Consider the measurable set
\[B\coloneq\left\{ \mathcal{N}_{\ell^{\prime}}\left(\mathbb{R}^{G}\right)=0\right\}\in\left(\mathcal{B}^{G}\right)^{\ast}.\]
Then \(B\) is \(\mathcal{R}^{\ast}\)-invariant (in fact, \(R_{g}^{\ast-1}\left(B\right)=B\) as sets for every \(g\in G\)), and \(\ell^{\prime\ast}\left(B\right)=e^{-\lambda}\in\left(0,1\right)\). Now on \(B\) it holds that \(P_{o}=\int_{\mathbb{R}^{G}}\Xi_{e_{G}}d\mathcal{N}_{\ell^{\prime}}=0\), hence \(X_{o}^{\prime}=-c_{o}\). Moreover, by the \(\mathcal{R}^{\ast}\)-invariance of \(B\), on \(B\) it holds that \(X_{o}^{\prime}\circ R_{g^{-1}}^{\ast}=-c_{o}\) for every \(g\in G\). Let \(\mathbf{x}_{o}=\left(-c_{o}\right)_{g\in G}\in\mathbb{R}^{G}\) and define the event \(E\coloneq\{\mathbf{X}^{\prime}=\mathbf{x}_{o}\}\). We clearly have \(\mathbb{P}_{\mathbf{X}^{\prime}}\left(R_{g}^{-1}\left(E\right)\triangle E\right)=0\) for every \(g\in G\). Since \(B\subseteq\mathbf{X}^{\prime-1}\left(E\right)\) we have
\[\mathbb{P}_{\mathbf{X}^{\prime}}\left(E\right)=\ell^{\prime\ast}\left(\mathbf{X}^{\prime-1}\left(E\right)\right)\geq\ell^{\prime\ast}\left(B\right)=e^{-\lambda}>0,\]
while \(\mathbb{P}_{\mathbf{X}^{\prime}}\left(E\right)<1\) since \(\mathbf{X}^{\prime}\) is non-constant (as \(\ell^{\prime}\) is non-zero). Therefore \(\mathbf{X}^{\prime}\) is non-ergodic.
\end{proof}

\begin{proof}[Final proof of the main theorem]
Suppose \(\mathbf{X}\) is an ergodic separable in probability {\sc id}p stationary \(G\)-process with L\'{e}vy measure \(\ell\). It is sufficient to assume that \(G\) is countable by \cref{prop:reduct}. By the Maruyama representation \eqref{eq:genfo}, we may assume that \(\mathbf{X}\) is of the form
\[\mathbf{X}=\left(X_{g}\right)_{g\in G}=\big(X_{o}\circ R_{g^{-1}}^{\ast}+b\big)_{g\in G}.\]
If \(X_{o}\equiv0\), then \(\mathbf{X}\) is the deterministic constant process \(\left(b\right)_{g\in G}\), and is trivially weakly mixing. Assume then that \(\mathbf{X}\)
is non-constant. Since \(\mathbf{X}\) is ergodic, \(\mathcal{R}:G\curvearrowright\left(\mathbb{R}^{G},\mathcal{B}^{G},\ell\right)\) is null by \cref{lem:null}. Then by the aforementioned \cite[Thm.~2]{avraham2025poissonian}, the Poisson suspension \(\mathcal{R}^{\ast}:G\curvearrowright\big(\left(\mathbb{R}^{G}\right)^{\ast},\left(\mathcal{B}^{G}\right)^{\ast},\ell^{\ast}\big)\) is weakly mixing. Consider the map
\[\left(\mathbb{R}^G\right)^{\ast}\to\mathbb{R}^{G},\quad
\theta\mapsto\big(X_{o}\big(R_{g^{-1}}^{\ast}\left(\theta\right)\big)+b\big)_{g\in G}.\]
By \eqref{eq:equivar} this map is \(G\)-equivariant. By virtue of the Maruyama representation, this map pushes \(\ell^{\ast}\) to \(\mathbb{P}_{\mathbf{X}}\). Therefore, the probability preserving \(G\)-system associated with \(\mathbf{X}\) is a factor of the weakly mixing probability preserving \(G\)-system \(\mathcal{R}^{\ast}:G\curvearrowright\big(\left(\mathbb{R}^{G}\right)^{\ast},\left(\mathcal{B}^{G}\right)^{\ast},\ell^{\ast}\big)\). Since every factor of a weakly mixing probability preserving \(G\)-system is weakly mixing, \(\mathbf{X}\) is weakly mixing.

\smallskip

It remains to treat the general infinitely divisible case. Let \(\mathbf{X}\) be an ergodic separable in probability infinitely divisible stationary \(G\)-process. Using the L\'{e}vy--Khintchine representation,
\[\mathbf{X}\eqd\mathbf{Z}\oplus\mathbf{P},\]
where \(\mathbf{Z}\) is a centered Gaussian stationary \(G\)-process and \(\mathbf{P}\) is an independent {\sc id}p stationary \(G\)-process. The uniqueness of the L\'{e}vy--Khintchine representation shows that the components are separable in probability as well. Since \(\mathbf{X}\) is ergodic, both \(\mathbf{Z}\) and \(\mathbf{P}\) are ergodic by \cref{lem:sam}. Now \(\mathbf{Z}\) is weakly mixing by the classical Gaussian result together with stochastically continuous extension, and \(\mathbf{P}\) is weakly mixing by the {\sc id}p case proved above. Hence \(\mathbf{Z}\otimes\mathbf{P}\) is weakly mixing. Since \(\mathbf{X}\) is a factor of \(\mathbf{Z}\otimes\mathbf{P}\) under the coordinate addition map, \(\mathbf{X}\) is weakly mixing.
\end{proof}

\section{Applications and the null condition}\label{sct:exm}

In what follows, we discuss examples of {\sc id}p stationary processes to which the main theorem applies. We will also suggest the null condition for weak mixing, which is formulated in terms of the underlying Maruyama representation when is available.

\smallskip

Let us start by mentioning that the main theorem is new already for {\sc id}p stationary processes indexed by general countable groups, and in particular for \(\alpha\)-stable stationary process indexed by nonamenable groups. In \cite[\S5]{mj2022mixing}, the authors used the Rosinski representation to construct \(\alpha\)-stable stationary processes indexed by non-elementary hyperbolic groups, which are never amenable, and proved they are ergodic \cite[Thm.~1.2]{mj2022mixing}. By the main theorem, these \(\alpha\)-stable stationary processes are weakly mixing.

\smallskip

The main theorem is formulated at the level of the law of an abstract {\sc id}p stationary process, without reference to a Maruyama representation as part of its data; that is, without presenting the process as a Poisson stochastic integral over a measure preserving system. In the proof, separability in probability allows us to transfer the problem to the Maruyama representation of some abstract countable indexing group, whose relation to the law of the original process is rather oblique. In the common case, however, the process is given from the outset by a Maruyama representation. We will show that when such a representation is given, the nullity of the representing system is a sufficient condition for weak mixing.

\smallskip

Let \(G\) be a group and \(\mathcal{T}=\{T_{g}\}_{g\in G}\) a measure preserving \(G\)-system on a standard \(\sigma\)-finite measure space \(\left(L,\mathcal{L},\ell\right)\). Recall that \(\mathcal{T}\) is {\bf null} if there does not exist a \(\mathcal{T}\)-invariant set \(A\) in \(L\) with \(0<\ell\left(A\right)<+\infty\). In particular, when \(\ell\) is an infinite measure, if \(\mathcal{T}\) is ergodic then it is null.

\smallskip

We shall use the Poisson stochastic integral and its properties presented in \cref{sct:Poissint}. While there it is presented for \(L=\mathbb{R}^{G}\) with \(G\) countable, the same applies to every standard measure space, and so we have a standard probability space \(\left(L^{\ast},\mathcal{L}^{\ast},\ell^{\ast}\right)\) realizing the Poisson point process with intensity \(\ell\); see \cite[\S3]{avraham2025poissonian}. Let \(f:L\to\mathbb{R}\) be a measurable function satisfying
\begin{equation}\label{eq:wedge}
\int_{L}\left|f\right|^{2}\wedge1\,d\ell<+\infty,
\end{equation}
and let \(I_{\ell}\left(f\right)\) be the Poisson stochastic integral with intensity \(\ell\), defined on \(\left(L^{\ast},\mathcal{L}^{\ast},\ell^{\ast}\right)\). Fix \(b\in\mathbb{R}\), and define a \(G\)-process \(\mathbf{X}=\left(X_{g}\right)_{g\in G}\) by
\begin{equation}\label{eq:genex}
X_{g}\coloneq I_{\ell}\left(f\circ T_{g^{-1}}\right)+b,\quad g\in G.
\end{equation}
Then \(\mathbf{X}\) is an {\sc id}p stationary \(G\)-process represented over the L\'{e}vy representation space \(\left(L,\mathcal{L},\ell\right)\). Note that \(\mathbf{X}\) is separable in probability: \(\{X_{g}:g\in G\}\) is a subset of the space \(L^{0}\left(L^{\ast},\mathcal{L}^{\ast},\ell^{\ast}\right)\), which is separable with respect to convergence in probability.

\begin{thm}\label{thm:nullrep}
If the measure preserving \(G\)-system \(\mathcal{T}\) is null, then the {\sc id}p stationary \(G\)-process \(\mathbf{X}\) defined in \eqref{eq:genex} is weakly mixing.
\end{thm}

\begin{proof}[Proof of \cref{thm:nullrep}]
We use the stochastically continuous extensions within \(\mathrm{Aut}\left(L,\mathcal{L},\ell\right)\), which is Polish in the weak topology; see \cite[\S2.1]{le2022polish}. Let
\[\widehat{G}\coloneq\overline{\{T_{g}:g\in G\}}\leq \mathrm{Aut}\left(L,\mathcal{L},\ell\right).\]
Then \(\widehat{G}\) is a Polish group, and the homomorphism \(\tau:G\to\widehat{G}\), \(\tau\left(g\right)=T_g\), has dense image. We now claim that a stochastically continuous extension \(\widehat{\mathbf{X}}\) of \(\mathbf{X}\) is given by
\[\widehat{X}_{T}\coloneq I_{\ell}\left(f\circ T^{-1}\right)+b=I_{\ell}\left(f\right)\circ\left(T^{-1}\right)^{\ast}+b,\quad T\in\widehat{G},\]
where we used \eqref{eq:equivar}. It is well-defined since each \(T\in\widehat{G}\) preserves \(\ell\). Moreover, for every \(g\in G\),
\[\widehat{X}_{\tau\left(g\right)}=I_{\ell}\left(f\circ T_{g}^{-1}\right)+b=I_{\ell}\left(f\circ T_{g^{-1}}\right)+b=X_{g},\]
so \(\widehat{\mathbf{X}}\mid_{\tau\left(G\right)}\eqd\mathbf{X}\). To see that \(\widehat{\mathbf{X}}\) is stochastically continuous, let \(T_{n}\to T\) in \(\mathrm{Aut}\left(L,\mathcal{L},\ell\right)\). By \cite[Prop.~3.1]{avraham2025poissonian} we have \(\left(T_{n}^{-1}\right)^{\ast}\to\left(T^{-1}\right)^{\ast}\) in \(\mathrm{Aut}\left(L^{\ast},\mathcal{L}^{\ast},\ell^{\ast}\right)\), and therefore
\[\widehat{X}_{T_{n}}=I_{\ell}\left(f\right)\circ\left(T_{n}^{-1}\right)^{\ast}+b\pto I_{\ell}\left(f\right)\circ\left(T^{-1}\right)^{\ast}+b=\widehat{X}_{T}.\]
Thus \(\widehat{\mathbf{X}}\) is a stochastically continuous extension of \(\mathbf{X}\) in the sense of \cref{thm:stochext}.

\smallskip

Since \(\mathcal{T}\) is null, the measure preserving \(\widehat{G}\)-action on \(\left(L,\mathcal{L},\ell\right)\) clearly remains null. Then by \cite[Thm.~2]{avraham2025poissonian}, the Poisson suspension over the measure preserving \(\widehat{G}\)-action on \(\left(L,\mathcal{L},\ell\right)\) is weakly mixing. As in the proof of the main theorem, \(\widehat{\mathbf{X}}\) is a factor of this Poisson suspension, hence \(\widehat{\mathbf{X}}\) is weakly mixing as a \(\widehat{G}\)-process. Finally, since \(\widehat{\mathbf{X}}\) is a stochastically continuous extension of \(\mathbf{X}\), the last part of \cref{thm:stochext} gives that \(\mathbf{X}\) is weakly mixing as a \(G\)-process.
\end{proof}

In the following we provide examples of {\sc id}p stationary processes of the general form \eqref{eq:genex}. We begin with \(\alpha\)-stable stationary processes, whose Maruyama representations are given over \emph{Maharam actions}, following the construction developed by one of the authors in \cite{roy2012maharam}.

\begin{exm}[Stable processes from Maharam actions]
Let \(M\) be a compact smooth manifold with \(\dim M>0\), and fix a volume form \(\mathrm{Vol}\) on \(M\). The \(C^{\infty}\)-diffeomorphism group \(\mathrm{Diff}\left(M\right)\) of \(M\) is a non-locally compact Polish group with the \(C^{\infty}\) compact-open topology. The action of \(\mathrm{Diff}\left(M\right)\) on \(M\) preserves only the measure class of \(\mathrm{Vol}\). Let \(J_{T}\) be the Jacobian of \(T\), and so
\[\mathrm{Vol}\left(T\left(A\right)\right)=\int_{A}J_{T}d\mathrm{Vol},\quad A\in\mathcal{B}\left(M\right).\]
Fix \(0<\alpha<2\), and consider the standard infinite measure space
\[\left(L,\mathcal{L},\ell\right)\coloneq\big(M\times\mathbb{R}_{>0},\mathcal{B}\left(M\times\mathbb{R}_{>0}\right),\mathrm{Vol}\left(dx\right)\otimes\frac{dt}{t^{1+\alpha}}\big),\]
where \(dt\) denotes the usual Lebesgue measure (so \(\frac{dt}{t}\) is the multiplicative Haar measure of \(\mathbb{R}_{>0}\)).

\smallskip

Let \(G\leq\mathrm{Diff}\left(M\right)\) be an abstract subgroup. There is an infinite measure preserving action \(G\curvearrowright\left(L,\mathcal{L},\ell\right)\), the associated Maharam action, given by
\[\widetilde{T}\left(x,t\right)\coloneq\big(T\left(x\right),tJ_{T}\left(x\right)^{1/\alpha}\big),\quad T\in G.\]
Given \(f\in L^{\alpha}\left(M,\mathrm{Vol}\right)\), define
\[F:M\times\mathbb{R}_{>0}\to\mathbb{R},\quad F\left(x,t\right)\coloneq tf\left(x\right).\]
Using the change of variable \(s=t\left|f\left(x\right)\right|\), one has
\[\int_{L}\left|F\right|^{2}\wedge1\,d\ell=\int_{M\times\mathbb{R}_{>0}}\left|tf\left(x\right)\right|^{2}\wedge1\,\frac{dt}{t^{1+\alpha}}d\mathrm{Vol}\left(x\right)=c_{\alpha}\cdot\int_{M}\left|f\right|^{\alpha}d\mathrm{Vol}<+\infty,\]
where
\[c_{\alpha}\coloneq\int_{0}^{+\infty}t^{2}\wedge1\,\frac{dt}{t^{1+\alpha}}<+\infty.\]
Fixing \(b\in\mathbb{R}\), we obtain an \(\alpha\)-stable stationary \(G\)-process \(\mathbf{X}\) by letting
\[X_{T}\coloneq I_{\ell}\big(F\circ \widetilde{T}^{-1}\big)+b,\quad T\in G.\]
The fact that the distribution of \(\mathbf{X}\) is \(\alpha\)-stable was shown in \cite[\S5.2]{roy2012maharam}.

\smallskip

Whenever \(G\) acts ergodically on \(\left(L,\mathcal{L},\ell\right)\) (as is the case \(G=\mathrm{Diff}\left(M\right)\); see \cite[App.~B]{avraham2025poissonian}), this action is in particular null, and hence \(\mathbf{X}\) is weakly mixing by \cref{thm:nullrep}.
\end{exm}

Next we give examples of {\sc id}p stationary processes indexed by groups which admit natural infinite measure preserving actions, but which need not carry any Polish group topology.

\begin{exm}
Consider the group \(\mathrm{Aut}\left(\mathbb{R},\lambda\right)\), where \(\lambda\) is the usual Lebesgue measure. With its weak topology, \(\mathrm{Aut}\left(\mathbb{R},\lambda\right)\) is a non-locally compact Polish group, with a tautological measure preserving action on \(\left(\mathbb{R},\lambda\right)\). Let \(G\leq\mathrm{Aut}\left(\mathbb{R},\lambda\right)\) be an abstract subgroup. Fix a measurable set \(A\) with \(0<\lambda\left(A\right)<+\infty\). Let \(N_{A}\) be the random variable which counts the points that lie in \(A\), and thus \(N_{A}\) has Poisson distribution with mean \(\lambda\left(A\right)\). Put \(b\coloneq\lambda\left(A\right)\) and define
\[X_{T}\coloneq I_{\lambda}\left(\mathbf{1}_{A}\right)\circ\left(T^{-1}\right)^{\ast}+b=N_{T\left(A\right)},\quad T\in G.\]
Here we used that \(I_{\lambda}\left(\mathbf{1}_{A}\right)=N_{A}-\lambda\left(A\right)\). Then \(\mathbf{X}\coloneq\left(X_{T}\right)_{T\in G}\) is an {\sc id}p stationary \(G\)-process. When the action of \(G\) on \(\left(\mathbb{R},\lambda\right)\) is null, \(\mathbf{X}\) is weakly mixing by \cref{thm:nullrep}.

\smallskip

Let us now discuss the role of topology in the preceding example. It may be the case that \(G\leq\mathrm{Aut}\left(\mathbb{R},\lambda\right)\) is not closed in the weak topology, but is still Polishable, namely it admits an intrinsic Polish group topology making the inclusion \(G\hookrightarrow\mathrm{Aut}\left(\mathbb{R},\lambda\right)\) continuous. For instance, this happens for full groups of countable equivalence relations with the uniform topology, and more generally for some orbit full groups with the topology of orbital convergence in measure; see \cite[Props.~4.7, 6.13; Thms.~5.7, 6.4, 6.18]{hoareau2025polish}. In this case, one sees that the resulting process is stochastically continuous.

\smallskip

The point of \cref{thm:nullrep} is that Polishability is not needed to deduce weak mixing. Define
\[\mathrm{Aut}\left(\mathbb{R},\lambda\right)_{\mathrm{f}}\coloneq\left\{ T\in\mathrm{Aut}\left(\mathbb{R},\lambda\right):\lambda\left(\left\{t\in\mathbb{R}:T\left(t\right)\neq t\right\}\right)<+\infty\right\}.\]
This subgroup is dense in \(\mathrm{Aut}\left(\mathbb{R},\lambda\right)\) in the weak topology, hence its action on \(\left(\mathbb{R},\lambda\right)\) is ergodic, and in particular null; see \cite[Ex.~4.6]{hoareau2025polish}. Nevertheless, \(\mathrm{Aut}\left(\mathbb{R},\lambda\right)_{\mathrm{f}}\) is not only non-Polishable, but admits no Polish group topology \cite[Thm.~2.6, 2.10]{le2022polish}.

\smallskip

More generally, let \(\mathbb{G}\leq\mathrm{Aut}\left(\mathbb{R},\lambda\right)\) be an ergodic full group (see \cite[\S4]{hoareau2025polish}), and denote
\[\mathbb{G}_{\mathrm{f}}\coloneq\mathbb{G}\cap\mathrm{Aut}\left(\mathbb{R},\lambda\right)_{\mathrm{f}}.\]
Then \(\mathbb{G}_{\mathrm{f}}\) is dense in \(\mathbb{G}\) in the weak topology, hence its action on \(\left(\mathbb{R},\lambda\right)\) is ergodic and in particular null. As for the topology, \(\mathbb{G}_{\mathrm{f}}\) admits a Polish group topology if and only if \(\mathbb{G}\) is the full group of a countable equivalence relation \cite[Thm.~6.27]{hoareau2025polish}. Therefore, whenever \(\mathbb{G}\) is not of this form, \(\mathbb{G}_{\mathrm{f}}\) gives a class of indexing groups which admit no Polish group topology. Nevertheless, the corresponding {\sc id}p stationary process
\[\mathbf{X}=\left(N_{T\left(A\right)}\right)_{T\in\mathbb{G}_{\mathrm{f}}}\]
is weakly mixing by \cref{thm:nullrep}.
\end{exm}

\appendix

\section{Weak mixing and double ergodicity}\label{app:wm}

It is a classical fact that a probability preserving transformation is doubly ergodic if and only if it is weakly mixing. Proofs of this equivalence appear in various settings (see, for instance, \cite[Thm.~1.6]{tempelman2013} and \cite[Thm.~2.1]{glasner2016weak}), typically via alternative characterizations, such as those based on the ergodic theorem or on Koopman representations, and often under additional assumptions, including invertibility, group actions, or measurability of the action. The equivalence in full generality was established in a private communication of E. Roy with \'{E}lise Janvresse and Thierry de la Rue. In the following, we give a direct and elementary proof.

\smallskip

A {\bf probability preserving transformation} of a probability space \(\left(\mathbb{X},\mathcal{X},\xi\right)\) is a measurable map \(T:\mathbb{X}\to\mathbb{X}\) such that \(\xi\left(T^{-1}\left(A\right)\right)=\xi\left(A\right)\) for every \(A\in\mathcal{X}\).\footnote{This is to distinguish from \emph{automorphism}, that is required to be invertible.} Let \(\Theta\) be a set. A {\bf probability preserving \(\Theta\)-system} on \(\left(\mathbb{X},\mathcal{X},\xi\right)\) is a collection \(\mathcal{T}=\{T_{\theta}\}_{\theta\in\Theta}\) of probability preserving transformations of \(\left(\mathbb{X},\mathcal{X},\xi\right)\). The diagonal product \(\mathcal{T}\times\mathcal{S}\) of probability preserving \(\Theta\)-systems \(\mathcal{T}\) and \(\mathcal{S}\) is defined in the same way as in the group setting, and so are the invariant \(\sigma\)-algebra \(\mathcal{I}_{\mathcal{T}}\), and the ergodicity of a probability preserving \(\Theta\)-system (see \cref{sct:ergwm}).

\begin{prop}\label{prop:appwm}
Let \(\Theta\) be a set and \(\mathcal{T}\) a probability preserving \(\Theta\)-system. Then \(\mathcal{T}\times\mathcal{T}\) is ergodic if and only if \(\mathcal{T}\times\mathcal{S}\) is ergodic for every ergodic probability preserving \(\Theta\)-system \(\mathcal{S}\).
\end{prop}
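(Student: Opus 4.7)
The implication \((\Leftarrow)\) is immediate: applying the hypothesis with \(\mathcal{S}\) the trivial one-point system (which is ergodic) yields that \(\mathcal{T}\) itself is ergodic, and then applying it with \(\mathcal{S}=\mathcal{T}\) gives the ergodicity of \(\mathcal{T}\times\mathcal{T}\).

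For the nontrivial direction \((\Rightarrow)\), I would assume \(\mathcal{T}\times\mathcal{T}\) is ergodic, let \(\mathcal{S}=\{S_{\theta}\}_{\theta\in\Theta}\) be an ergodic probability preserving \(\Theta\)-system on \(\left(\mathbb{Y},\mathcal{Y},\eta\right)\), and suppose \(E\subseteq\mathbb{X}\times\mathbb{Y}\) is \(\mathcal{T}\times\mathcal{S}\)-invariant with \(c\coloneqq\left(\xi\otimes\eta\right)(E)\). Setting \(f\coloneqq\mathbf{1}_{E}\) and \(f_{0}\coloneqq f-c\in L^{2}\left(\xi\otimes\eta\right)\), one has \(\|f_{0}\|^{2}=c(1-c)\), and the goal is to show this quantity vanishes. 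The plan is to introduce the auxiliary functions
\[g(x)\coloneqq\eta(E_{x}),\quad h(y)\coloneqq\xi(E^{y}),\quad G(x,x')\coloneqq\eta(E_{x}\cap E_{x'}),\]
and observe, by combining the a.e.\ invariance of \(E\) under each \(T_{\theta}\times S_{\theta}\) with Fubini and the measure-preservation of the individual \(T_{\theta}\)'s and \(S_{\theta}\)'s, that \(g\) is \(\mathcal{T}\)-invariant, \(h\) is \(\mathcal{S}\)-invariant, and \(G\) is \(\mathcal{T}\times\mathcal{T}\)-invariant. The ergodicity of \(\mathcal{T}\) (which follows as \(\mathcal{T}\) is a factor of the ergodic \(\mathcal{T}\times\mathcal{T}\)) then forces \(g\equiv c\) a.e., the ergodicity of \(\mathcal{S}\) forces \(h\equiv c\) a.e., and the ergodicity of \(\mathcal{T}\times\mathcal{T}\) forces \(G\) to equal a.e.\ its mean value, which by Fubini together with \(h\equiv c\) equals \(\int h^{2}d\eta=c^{2}\).

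The conclusion would then follow from a short \(L^{2}\) calculation with the Gram-type kernel
\[A(x,x')\coloneqq\int f_{0}(x,y)f_{0}(x',y)d\eta(y)=G(x,x')-c\,g(x)-c\,g(x')+c^{2},\]
which vanishes \(\left(\xi\otimes\xi\right)\)-a.e.\ by the three identities just established. For any \(u\in L^{2}(\xi)\), Fubini gives
\[\int\Bigl(\int f_{0}(x,y)u(x)d\xi(x)\Bigr)^{2}d\eta(y)=\iint A(x,x')u(x)u(x')d\xi(x)d\xi(x')=0,\]
so \(\int f_{0}(x,y)u(x)d\xi(x)=0\) for \(\eta\)-a.e.\ \(y\); integrating against an arbitrary \(v\in L^{2}(\eta)\) then yields \(\langle f_{0},u\otimes v\rangle_{L^{2}(\xi\otimes\eta)}=0\), and since simple tensors \(u\otimes v\) span a dense subspace of \(L^{2}(\xi\otimes\eta)\) one concludes \(f_{0}=0\), so \(c(1-c)=0\) and \(c\in\{0,1\}\) as required. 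The main technical obstacle I anticipate is rigorously establishing the \(\mathcal{T}\)-, \(\mathcal{S}\)-, and \(\mathcal{T}\times\mathcal{T}\)-invariance of \(g\), \(h\), and \(G\) starting only from the a.e.\ identity \(f\circ(T_{\theta}\times S_{\theta})=f\): this requires Fubini together with the one-dimensional measure-preservation of the individual \(T_{\theta}\)'s and \(S_{\theta}\)'s (which are not assumed invertible), but once it is in place the remaining \(L^{2}\)-collapse step is routine.
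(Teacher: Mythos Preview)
Your proof is correct and takes essentially the same approach as the paper: form the Gram kernel \(A(x,x')=\int f_{0}(x,y)f_{0}(x',y)\,d\eta(y)\), use the ergodicity of \(\mathcal{T}\times\mathcal{T}\) together with that of \(\mathcal{S}\) to show it vanishes, and then conclude \(f_{0}=0\) via the density of simple tensors in \(L^{2}(\xi\otimes\eta)\). The paper streamlines slightly by observing that \(A\) itself is \(\mathcal{T}\times\mathcal{T}\)-invariant (so one need not introduce \(g\) and \(G\) separately and can work with a general mean-zero \(L^{\infty}\) invariant function rather than an indicator), but otherwise the arguments coincide.
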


\begin{proof}[Proof of \cref{prop:appwm}]
One implication is simple: if \(\mathcal{T}\times\mathcal{S}\) is ergodic for every ergodic probability preserving \(\Theta\)-system, then \(\mathcal{T}\) is ergodic (by taking \(\mathcal{S}\) to be the trivial probability preserving \(\Theta\)-system), hence \(\mathcal{T}\times\mathcal{T}\) is ergodic. For the converse, suppose \(\mathcal{T}\) and \(\mathcal{S}\) are probability preserving \(\Theta\)-systems defined on \(\left(\mathbb{X},\mathcal{X},\xi\right)\) and \(\left(\mathbb{Y},\mathcal{Y},\eta\right)\), respectively. Assume \(\mathcal{T}\times\mathcal{T}\) and \(\mathcal{S}\) are both ergodic. Let \(f\in L^{\infty}\left(\mathbb{X}\times \mathbb{Y},\xi\otimes\eta\right)\) be a \(\mathcal{T}\times\mathcal{S}\)-invariant function,\footnote{All functions considered in this proof are real-valued.} and we may assume without loss of generality that \(\iint_{\mathbb{X}\times\mathbb{Y}}fd\xi\otimes\eta=0\). Define the function
\[F\in L^{\infty}\left(\mathbb{X}\times\mathbb{X},\xi\otimes\xi\right),\quad F\left(x,x'\right)\coloneq\int_{\mathbb{Y}}f\left(x,y\right)f\left(x',y\right)d\eta\left(y\right).\]
Since \(f\) is \(\mathcal{T}\times\mathcal{S}\)-invariant and \(\eta\) is \(\mathcal{S}\)-invariant, \(F\) is \(\mathcal{T}\times\mathcal{T}\)-invariant. Since \(\mathcal{T}\times\mathcal{T}\) is ergodic, \(F=C\) \(\xi\otimes\xi\)-a.e. for a constant \(C\) , and we claim that \(C=0\). To see this, look at the projection
\[f_{\mathbb{Y}}\in L^{\infty}\left(\mathbb{Y},\eta\right),\quad f_{\mathbb{Y}}\left(y\right)\coloneq\int_{\mathbb{X}}f\left(x,y\right)d\xi\left(x\right).\]
Since \(f\) is \(\mathcal{T}\times\mathcal{S}\)-invariant and \(\xi\) is \(\mathcal{T}\)-invariant, \(f_{\mathbb{Y}}\) is \(\mathcal{S}\)-invariant. Since \(\mathcal{S}\) is ergodic, \(f_{\mathbb{Y}}=c\) \(\eta\)-a.e. for a constant \(c\), and by Fubini's theorem we have
\[c=\int_{\mathbb{Y}} f_{\mathbb{Y}}d\eta=\iint\nolimits_{\mathbb{X}\times\mathbb{Y}}fd\xi\otimes\eta=0.\]
By another use of Fubini's theorem we obtain the desired claim:
\[C=\iint\nolimits_{\mathbb{X}\times\mathbb{X}}Fd\xi\otimes\xi=\int_{\mathbb{Y}}\Big(\int_{\mathbb{X}}f\left(x,y\right)d\xi\left(x\right)\Big)\Big(\int_{\mathbb{X}}f\left(x^{\prime},y\right)d\xi\left(x^{\prime}\right)\Big)d\eta\left(y\right)=\int_{\mathbb{Y}}f_{\mathbb{Y}}^{2}d\eta=0.\]

We now complete the proof by deducing that \(f=0\) \(\xi\otimes\eta\)-a.e. Define the operator
\[\Psi_{f}:L^{2}\left(\mathbb{X},\xi\right)\to L^{2}\left(\mathbb{Y},\eta\right),\quad\Psi_{f}\left(\psi\right)\left(y\right)\coloneq\int_{\mathbb{X}}f\left(x,y\right)\psi\left(x\right)d\xi\left(x\right).\]
Then by Fubini's theorem and since \(F=0\) \(\xi\otimes\xi\)-a.e., for every \(\psi\in L^{2}\left(\mathbb{X},\xi\right)\) we have
\[\left\Vert\Psi_{f}\left(\psi\right)\right\Vert _{L^{2}\left(\eta\right)}^{2}=\iint\nolimits_{\mathbb{X}\times\mathbb{X}}F\left(x,x^{\prime}\right)\psi\left(x\right)\psi\left(x^{\prime}\right)d\xi\otimes\xi\left(x,x^{\prime}\right)=0,\]
hence \(\Psi_{f}\equiv 0\). By Fubini's theorem once again, for all \(\psi\in L^{2}\left(\mathbb{X},\xi\right)\), \(\varphi\in L^{2}\left(\mathbb{Y},\eta\right)\) we have
\[0=\left\langle\Psi_{f}\left(\psi\right),\varphi\right\rangle _{L^{2}\left(\eta\right)}=\iint\nolimits_{\mathbb{X}\times\mathbb{Y}}f\left(x,y\right)\psi\left(x\right)\varphi\left(y\right)d\xi\otimes\eta\left(x,y\right)=\left\langle f,\psi\otimes\varphi\right\rangle _{L^{2}\left(\xi\otimes\eta\right)}.\]
As linear combinations of such \(\psi\otimes\varphi\) are dense in \(L^{2}\left(\mathbb{X}\times\mathbb{Y},\xi\otimes\eta\right)\), we obtain \(f=0\) \(\xi\otimes\eta\)-a.e.
\end{proof}

\section{A characterization of stochastic continuity}\label{app:Pursley}

The following theorem was proved by Pursley~\cite{pursley1977} for \(G=\mathbb{R}\). It is closely related to a classical theorem of Hoffmann-J\o rgensen~\cite{hoffmann1973existence} (see also~\cite[\S9.4]{samorodnitsky1994stable}), which deals with general processes but is restricted to processes taking values in a compact metric space. Here, we provide a version of Pursley's theorem for general metrizable semigroups, with proofs essentially due to Pursley.

\smallskip

Let \(G\) be a metrizable semigroup with right unit \(e_{G}\), thus \(ge_{G}=g\) for all \(g\in G\). Similarly to the group case, a \(G\)-process \(\mathbf{X}=\left(X_{g}\right)\) is {\bf stationary} if \(\left(X_{gh}\right)_{h\in G}\eqd\left(X_{h}\right)_{h\in G}\) for each \(g\in G\). Such a stationary \(G\)-process is {\bf stochastically continuous} if
\(X_{g}\pto X_{g_{o}}\) as \(g\to g_{o}\) for every \(g_{o}\in G\). For \(g\in G\), denote by \(Q_{g}\) the (potentially non-invertible) translations of \(\mathbb{R}^{G}\) given by
\(Q_{g}:\left(x_{h}\right)_{h\in G}\mapsto\left(x_{gh}\right)_{h\in G}\).\footnote{The collection \(\{Q_{g}\}_{g\in G}\) forms a right action of \(G\) on \(\mathbb{R}^{G}\).} In the following discussion, \(Q_{g}^{-1}\left(\cdot\right)\) stands only for preimage of sets.

\begin{thm}[Hoffmann-J\o rgensen/Pursley]\label{thm:PursleyA}
Let \(G\) be a metrizable semigroup. A stationary \(G\)-process \(\mathbf{X}\) is stochastically continuous if and only if the following functions are all continuous:
\begin{equation}\label{eq:contfunc}
G\to\left[0,1\right],\quad g\mapsto\mathbb{P}_{\mathbf{X}}\left(Q_{g}^{-1}\left(E\right)\triangle Q_{g_{o}}^{-1}\left(E\right)\right),\quad E\in\mathcal{B}^{G},\,g_{o}\in G.\footnote{When \(G\) is a group (as in \cref{thm:Pursley}), it suffices to verify \eqref{eq:contfunc} only for \(g_{o}=e_{G}\).}
\end{equation}
\end{thm}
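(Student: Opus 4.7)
The plan is to treat each direction by reducing to cylinder sets, exploiting that $Q_g$ preserves $\mathbb{P}_{\mathbf{X}}$ by stationarity. For the forward direction, fix $g_o\in G$ and let $\mathcal{E}_{g_o}\subseteq\mathcal{B}^G$ be the family of $E$ for which $g\mapsto\mathbb{P}_{\mathbf{X}}(Q_g^{-1}(E)\triangle Q_{g_o}^{-1}(E))$ is continuous at $g_o$. Measure-preservation gives $\mathbb{P}_{\mathbf{X}}(Q_g^{-1}(E)\triangle Q_g^{-1}(F))=\mathbb{P}_{\mathbf{X}}(E\triangle F)$, and combined with the elementary bound $|\mathbb{P}(A_1\triangle A_2)-\mathbb{P}(B_1\triangle B_2)|\leq\mathbb{P}(A_1\triangle B_1)+\mathbb{P}(A_2\triangle B_2)$, this shows that $E_n\to E$ in $\mathrm{MAlg}(\mathcal{B}^G,\mathbb{P}_{\mathbf{X}})$ forces uniform (in $g$) convergence of the corresponding functions. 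Together with the obvious closure under complements and finite unions, $\mathcal{E}_{g_o}$ is closed in $\mathrm{MAlg}(\mathcal{B}^G,\mathbb{P}_{\mathbf{X}})$, so since the Boolean algebra of cylinder sets is dense there, it suffices to verify membership of every cylinder $E=\{(x_h)_{h\in G}:(x_{h_1},\dotsc,x_{h_n})\in B\}$ with $B\in\mathcal{B}(\mathbb{R}^n)$.

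For such a cylinder, set $Y_g:=(X_{gh_1},\dotsc,X_{gh_n})$; by stationarity $Y_g\eqd Y_{g_o}$ with common law $\mu_n$, and by stochastic continuity $Y_g\to Y_{g_o}$ in probability. Given $\eta>0$, use inner regularity of $\mu_n$ to choose a closed $F\subseteq B$ with $\mu_n(B\setminus F)<\eta/3$. Since $F$ is closed, $\{y:0<\mathrm{dist}(y,F)<\delta\}\downarrow\emptyset$ as $\delta\downarrow 0$, so I can choose $\delta>0$ with $\mu_n(\{0<\mathrm{dist}(\cdot,F)<\delta\})<\eta/3$ and then require $g$ close to $g_o$ so that $\mathbb{P}(|Y_g-Y_{g_o}|\geq\delta)<\eta/3$. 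Splitting $\{Y_g\in B,\,Y_{g_o}\notin B\}\subseteq\{Y_g\in B\setminus F\}\cup\{Y_g\in F,\,Y_{g_o}\notin F\}$ and further splitting the second piece according to whether $|Y_g-Y_{g_o}|\geq\delta$ yields $\mathbb{P}(Y_g\in B,\,Y_{g_o}\notin B)<\eta$; the symmetric bound is identical, giving $\mathbb{P}(\{Y_g\in B\}\triangle\{Y_{g_o}\in B\})\to 0$.

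For the reverse direction, fix $g_o\in G$ and apply the hypothesis to the single-coordinate cylinders $E_{n,k}:=\{(x_h)_{h\in G}:\lfloor nx_{e_G}\rfloor=k\}$ (using the right-unit identity $ge_G=g$) to obtain $\mathbb{P}(\{\lfloor nX_g\rfloor=k\}\triangle\{\lfloor nX_{g_o}\rfloor=k\})\to 0$ as $g\to g_o$ for every $n\in\mathbb{N}$ and $k\in\mathbb{Z}$. Each term is dominated by $2\mathbb{P}(\lfloor nX_{e_G}\rfloor=k)$ by stationarity, a quantity summing to $2$, so dominated convergence in $k$ gives $\mathbb{P}(\lfloor nX_g\rfloor\neq\lfloor nX_{g_o}\rfloor)\to 0$; since $|x-y|\geq 1/n$ forces $\lfloor nx\rfloor\neq\lfloor ny\rfloor$, this gives $\mathbb{P}(|X_g-X_{g_o}|\geq 1/n)\to 0$ for every $n$, hence $X_g\pto X_{g_o}$. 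The main obstacle is the boundary estimate in the forward direction: a naive portmanteau-style attempt would only control the symmetric difference for $\mu_n$-continuity sets $B$. The key resolution is that stationarity places $Y_g$ and $Y_{g_o}$ on the same footing with common law $\mu_n$, so inner regularity of this single measure approximates $B$ by a closed $F$ uniformly, and the closedness of $F$ is precisely what makes the set where $Y_{g_o}$ lies within distance $\delta$ of $F$ but not in $F$ have small $\mu_n$-measure.
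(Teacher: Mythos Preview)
Your proof is correct. The reverse direction is essentially the paper's argument: both partition $\mathbb{R}$ into cells of diameter $<1/n$ (you use the specific dyadic-type cells $\{\lfloor nx\rfloor=k\}$), bound the summands by $2\mathbb{P}_{\mathbf{X}}(E_{n,k})$ via stationarity, and apply dominated convergence over the countable partition.

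The forward direction, however, follows a genuinely different route. The paper proves a separate lemma showing that the family $\mathcal{C}_{g_o}$ of ``good'' sets is a $\sigma$-algebra (using dominated convergence over countable disjoint unions), and then verifies membership only for one-coordinate interval cylinders $E_h^I$ via an explicit decomposition of $I$ into annular pieces $I_n$ together with another dominated-convergence step. Your approach replaces the $\sigma$-algebra lemma by the observation that measure-preservation of $Q_g$ makes $E\mapsto\mathbb{P}_{\mathbf{X}}(Q_g^{-1}(E)\triangle Q_{g_o}^{-1}(E))$ uniformly (in $g$) continuous on $\mathrm{MAlg}(\mathcal{B}^G,\mathbb{P}_{\mathbf{X}})$, so closure of $\mathcal{E}_{g_o}$ in the measure algebra is immediate; and you verify membership for arbitrary finite-dimensional Borel cylinders via an inner-regularity/portmanteau argument exploiting that $Y_g$ and $Y_{g_o}$ share the same law $\mu_n$. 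Your path is somewhat slicker in that it avoids the auxiliary $\sigma$-algebra lemma and handles all cylinders at once; the paper's decomposition of intervals is more hands-on but entirely elementary and avoids invoking regularity of Borel measures. Both ultimately rest on the same two ingredients: stationarity to get identical marginal laws, and a dominated-convergence mechanism to pass from generators to the full $\sigma$-algebra.
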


\begin{lem}[Pursley]\label{lem:PursleyA}
Let \(G\) be a metrizable semigroup and \(\mathcal{T}=\{T_{g}\}_{g\in G}\) a probability preserving \(G\)-system defined on \(\left(\mathbb{X},\mathcal{X},\xi\right)\). For every \(g_{o}\in G\), the following family forms a \(\sigma\)-algebra:
\[\mathcal{C}_{g_{o}}\coloneq\big\{ E\in\mathcal{X}:\lim\nolimits_{g\to g_{o}}\xi\left(T_{g}^{-1}\left(E\right)\triangle T_{g_{o}}^{-1}\left(E\right)\right)=0\big\}.\]
\end{lem}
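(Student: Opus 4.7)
The plan is to verify the three defining properties of a $\sigma$-algebra for $\mathcal{C}_{g_o}$ in sequence, with the real content concentrated in the countable union step. That $\mathbb{X}\in\mathcal{C}_{g_o}$ is immediate, since $T_g^{-1}(\mathbb{X})=\mathbb{X}$ for all $g$. Closure under complements is also immediate from the identity $A^{\complement}\triangle B^{\complement}=A\triangle B$, which gives $T_g^{-1}(E^{\complement})\triangle T_{g_o}^{-1}(E^{\complement})=T_g^{-1}(E)\triangle T_{g_o}^{-1}(E)$.

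For unions, I would first handle the finite case. Using the standard set-theoretic inclusion $(A_1\cup A_2)\triangle(B_1\cup B_2)\subseteq(A_1\triangle B_1)\cup(A_2\triangle B_2)$ together with subadditivity of $\xi$, one sees that $E_1,E_2\in\mathcal{C}_{g_o}$ implies $E_1\cup E_2\in\mathcal{C}_{g_o}$, and this extends by induction to any finite union.

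The main step is the countable case. Given $(E_n)_{n\geq 1}\subseteq\mathcal{C}_{g_o}$, set $E\coloneqq\bigcup_{n}E_n$ and $F_n\coloneqq\bigcup_{k\leq n}E_k$, so that $F_n\in\mathcal{C}_{g_o}$ by finite union closure and $F_n\uparrow E$. The key observation, which is what makes the argument work uniformly in $g$, is that since each $T_g$ is probability preserving one has
\[\xi\bigl(T_g^{-1}(E)\triangle T_g^{-1}(F_n)\bigr)=\xi\bigl(T_g^{-1}(E\setminus F_n)\bigr)=\xi(E\setminus F_n),\]
and this quantity is independent of $g$ and tends to $0$ as $n\to\infty$. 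Given $\varepsilon>0$, I would fix $n$ so large that $\xi(E\setminus F_n)<\varepsilon/3$, and then apply the triangle inequality for symmetric difference to write
\[T_g^{-1}(E)\triangle T_{g_o}^{-1}(E)\subseteq\bigl[T_g^{-1}(E)\triangle T_g^{-1}(F_n)\bigr]\cup\bigl[T_g^{-1}(F_n)\triangle T_{g_o}^{-1}(F_n)\bigr]\cup\bigl[T_{g_o}^{-1}(F_n)\triangle T_{g_o}^{-1}(E)\bigr].\]
The first and third sets on the right have $\xi$-measure $\xi(E\setminus F_n)<\varepsilon/3$ by the observation above, while the middle one has $\xi$-measure less than $\varepsilon/3$ for all $g$ sufficiently close to $g_o$, since $F_n\in\mathcal{C}_{g_o}$. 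Subadditivity then yields $\xi(T_g^{-1}(E)\triangle T_{g_o}^{-1}(E))<\varepsilon$ for $g$ close enough to $g_o$, so $E\in\mathcal{C}_{g_o}$.

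The only real subtlety is the need for uniformity in $g$ in the approximation step, and this is resolved cleanly by the measure-preserving hypothesis, which makes $\xi(T_g^{-1}(E\setminus F_n))=\xi(E\setminus F_n)$ hold exactly and not merely asymptotically. Without the measure-preserving assumption one would only get closure under \emph{finite} unions, so that property is where the hypothesis is genuinely used.
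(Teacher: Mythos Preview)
Your proof is correct and follows the same overall structure as the paper's: trivial membership of $\mathbb{X}$, closure under complements via $A^{\complement}\triangle B^{\complement}=A\triangle B$, and then the countable union step as the main content, with measure preservation supplying the needed uniformity in $g$.

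The only tactical difference is in how the countable union is handled. The paper first reduces to countable \emph{disjoint} unions (having already shown closure under finite intersections, hence finite unions), and then bounds the series $\sum_n \xi\bigl(T_g^{-1}(E_n)\triangle T_{g_o}^{-1}(E_n)\bigr)$ termwise by $2\xi(E_n)$, which is summable by disjointness; dominated convergence then pushes the limit $g\to g_o$ inside the sum. You instead treat arbitrary countable unions directly via an $\varepsilon/3$ truncation, using $\xi\bigl(T_g^{-1}(E\setminus F_n)\bigr)=\xi(E\setminus F_n)$ to control the tails uniformly in $g$. Both arguments extract exactly the same information from the measure-preserving hypothesis, and neither is more general than the other here; your version is slightly more self-contained in that it avoids the disjointification detour, while the paper's version packages the interchange of limit and sum into a single appeal to dominated convergence.
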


\begin{proof}[Proof of \cref{lem:PursleyA}]
It is clear that \(\mathbb{X}\in\mathcal{C}_{g_{o}}\). From the identity
\[T_{g}^{-1}\left(E\right)\triangle T_{g_{o}}^{-1}\left(E\right)=T_{g}^{-1}\left(\mathbb{X}\backslash E\right)\triangle T_{g_{o}}^{-1}\left(\mathbb{X}\backslash E\right),\]
it follows that \(\mathcal{C}_{g_{o}}\) is closed under complements. Note that for all \(E_{1},\dotsc,E_{n}\in\mathcal{X}\) we have
\[T_{g}^{-1}\big(\bigcap\nolimits_{i=1}^{n}E_{i}\big)\triangle T_{g_{o}}^{-1}\big(\bigcap\nolimits_{i=1}^{n}E_{i}\big)\,\subseteq\,\bigcup\nolimits_{i=1}^{n}T_{g}^{-1}\left(E_{i}\right)\triangle T_{g_{o}}^{-1}\left(E_{i}\right),\]
so \(\mathcal{C}_{g_{o}}\) is closed under finite intersections. Then it suffices to show closeness under countable \emph{disjoint} unions. Let \(E\coloneq\bigsqcup_{n\in\mathbb{N}}E_{n}\) with \(E_{n}\in\mathcal{C}_{g_{o}}\) for \(n\in\mathbb{N}\). Similarly to the above, we have
\[T_{g}^{-1}\left(E\right)\triangle T_{g_{o}}^{-1}\left(E\right)
\subseteq\bigcup\nolimits_{n\in\mathbb{N}}T_{g}^{-1}\left(E_{n}\right)\triangle T_{g_{o}}^{-1}\left(E_{n}\right),\]
and so
\[\xi\left(T_{g}^{-1}\left(E\right)\triangle T_{g_o}^{-1}\left(E\right)\right)\leq\sum\nolimits_{n\in\mathbb{N}}\xi\left(T_{g}^{-1}\left(E_n\right)\triangle T_{g_o}^{-1}\left(E_n\right)\right).\]
Since the system is probability preserving, the \(n^{\mathrm{th}}\) term of the series is bounded by \(2\cdot\xi\left(E_{n}\right)\), which forms a convergent series (since \(E_{n}\), \(n\in\mathbb{N}\), are disjoint). Then by dominated convergence
\[\lim\nolimits_{g\to g_{o}}\xi\left(T_{g}^{-1}\left(E\right)\triangle T_{g_{o}}^{-1}\left(E\right)\right)\leq\sum\nolimits_{n\in\mathbb{N}}\lim\nolimits_{g\to g_{o}}\xi\left(T_{g}^{-1}\left(E_{n}\right)\triangle T_{g_{o}}^{-1}\left(E_{n}\right)\right)=0.\qedhere\]
\end{proof}

\begin{proof}[Proof of \cref{thm:PursleyA}. (Pursley)]
Assume all functions \eqref{eq:contfunc} are continuous. Let some \(g_{o}\in G\). Given \(\epsilon>0\), pick a measurable
partition \(\mathbb{R}=\bigsqcup_{n\in\mathbb{N}}B_{n}\) with each \(B_{n}\) of diameter \(<\epsilon\), and define
\[E_{n}\coloneq\{\left(x_{h}\right)_{h\in G}:x_{e_{G}}\in B_{n}\}\in\mathcal{B}^{G},\quad n\in\mathbb{N}.\]
Then for all \(g\in G\) we have
\[\big\{\left|X_{g}-X_{g_{o}}\right|>\epsilon\big\}\subseteq
\bigcup\nolimits_{n\in\mathbb{N}}\big(\{X_{g}\in B_{n}\}\triangle\{X_{g_{o}}\in B_{n}\}\big)=\bigcup\nolimits_{n\in\mathbb{N}}\left(Q_{g}^{-1}\left(E_{n}\right)\triangle Q_{g_{o}}^{-1}\left(E_{n}\right)\right),\]
and therefore
\[\mathbb{P}\big(\left|X_{g}-X_{g_{o}}\right|>\epsilon\big)
\leq\sum\nolimits_{n\in\mathbb{N}}\mathbb{P}_{\mathbf{X}}\big(Q_{g}^{-1}\left(E_{n}\right)\triangle Q_{g_{o}}^{-1}\left(E_{n}\right)\big).\]
Using the stationarity of \(\mathbf{X}\), for each \(n\in\mathbb{N}\) we have
\[\mathbb{P}_{\mathbf{X}}\big(Q_{g}^{-1}\left(E_{n}\right)\triangle Q_{g_{o}}^{-1}\left(E_{n}\right)\big)\leq\mathbb{P}_{\mathbf{X}}\big(Q_{g}^{-1}\left(E_{n}\right)\big)+\mathbb{P}_{\mathbf{X}}\big(Q_{g_{o}}^{-1}\left(E_{n}\right)\big)=2\cdot\mathbb{P}_{\mathbf{X}}\left(E_{n}\right),\]
and \(\sum_{n\in\mathbb{N}}2\cdot\mathbb{P}_{\mathbf{X}}\left(E_{n}\right)=2\). Therefore, by dominated convergence in the above inequality,
\[\lim\nolimits_{g\to g_{o}}\mathbb{P}\big(\left|X_{g}-X_{g_{o}}\right|>\epsilon\big)\leq\sum\nolimits_{n\in\mathbb{N}}\lim\nolimits_{g\to g_{o}}\mathbb{P}_{\mathbf{X}}\big(Q_{g}^{-1}\left(E_{n}\right)\triangle Q_{g_{o}}^{-1}\left(E_{n}\right)\big)=0,\]
and so \(\mathbf{X}\) is stochastically continuous.

\smallskip

For the converse, assume \(\mathbf{X}\) is stochastically continuous. Fix \(g_{o}\in G\), and put
\[\mathcal{C}_{g_{o}}\coloneq\big\{E\in\mathcal{B}^{G}:\lim\nolimits_{g\to g_{o}}
\mathbb{P}_{\mathbf{X}}\big(Q_{g}^{-1}\left(E\right)\triangle Q_{g_{o}}^{-1}\left(E\right)\big)=0\big\}.\]
By \cref{lem:PursleyA}, \(\mathcal{C}_{g_{o}}\) is a sub-\(\sigma\)-algebra. It thus suffices to show
that \(\mathcal{C}_{g_{o}}\) contains a generating family in \(\mathcal{B}^{G}\). Let \(\mathcal{I}\) be the
family of bounded open intervals in \(\mathbb{R}\). For \(h\in G\) and \(B\in\mathcal{B}\left(\mathbb{R}\right)\) put
\[E_{h}^{B}\coloneq\big\{\left(x_{k}\right)_{k\in G}\in\mathbb{R}^{G}:x_{h}\in B\big\}.\]
Since \(\{E_{h}^{I}:h\in G,\,I\in\mathcal{I}\}\) generates \(\mathcal{B}^{G}\), we complete the proof by
showing that \(E_{h}^{I}\in\mathcal{C}_{g_{o}}\) for every \(h\in G\) and \(I\in\mathcal{I}\). Fix \(h\in G\)
and \(I\in\mathcal{I}\), and write
\[\mathbb{P}_{\mathbf{X}}\big(Q_{g}^{-1}\left(E_{h}^{I}\right)\triangle Q_{g_{o}}^{-1}\left(E_{h}^{I}\right)\big)
=\mathbb{P}_{\mathbf{X}}\big(Q_{g}^{-1}\left(E_{h}^{I}\right)\cap Q_{g_{o}}^{-1}\big(E_{h}^{\mathbb{R}\setminus I}\big)\big)+\mathbb{P}_{\mathbf{X}}\big(Q_{g}^{-1}\big(E_{h}^{\mathbb{R}\setminus I}\big)\cap Q_{g_{o}}^{-1}\left(E_{h}^{I}\right)\big),\]
and we must show that this converges to \(0\) as \(g\to g_{o}\). We will show that
\begin{equation}\label{eq:sufft}
\lim\nolimits_{g\to g_{o}}\mathbb{P}_{\mathbf{X}}\big(Q_{g}^{-1}\left(E_{h}^{I}\right)\cap Q_{g_{o}}^{-1}\big(E_{h}^{\mathbb{R}\setminus I}\big)\big)=0,
\end{equation}
and one similarly shows that \(\lim_{g\to g_{o}}\mathbb{P}_{\mathbf{X}}\big(Q_{g}^{-1}\big(E_{h}^{\mathbb{R}\setminus I}\big)\cap Q_{g_{o}}^{-1}\left(E_{h}^{I}\right)\big)=0\), hence \(E_{h}^{I}\in\mathcal{C}_{g_{o}}\). To show \eqref{eq:sufft}, write \(I=\left(a-\epsilon,a+\epsilon\right)\) and we may assume \(0<\epsilon<1\) and \(N_{\epsilon}\coloneq\lfloor 1/\epsilon\rfloor\geq1\). Decompose
\[I=\bigsqcup\nolimits_{n\geq N_{\epsilon}}B_{n},\text{ where }
B_{n}\coloneq\big\{b\in\mathbb{R}:\epsilon-1/n\leq\left|a-b\right|<\epsilon-1/\left(n+1\right)\big\},\]
and then for every \(g\in G\) we have
\[\mathbb{P}_{\mathbf{X}}\big(Q_{g}^{-1}\left(E_{h}^{I}\right)\cap Q_{g_{o}}^{-1}\big(E_{h}^{\mathbb{R}\setminus I}\big)\big)=\sum\nolimits_{n\geq N_{\epsilon}}
\mathbb{P}_{\mathbf{X}}\big(Q_{g}^{-1}\big(E_{h}^{B_{n}}\big)\cap Q_{g_{o}}^{-1}\big(E_{h}^{\mathbb{R}\setminus I}\big)\big).\]
Using the stationarity of \(\mathbf{X}\), for each \(n\geq N_{\epsilon}\) we have
\[\mathbb{P}_{\mathbf{X}}\big(Q_{g}^{-1}\big(E_{h}^{B_{n}}\big)\cap Q_{g_{o}}^{-1}\big(E_{h}^{\mathbb{R}\backslash I}\big)\big)\leq\mathbb{P}_{\mathbf{X}}\big(Q_{g}^{-1}\big(E_{h}^{B_{n}}\big)\big)=\mathbb{P}_{\mathbf{X}}\big(E_{h}^{B_{n}}\big),\]
and \(\sum_{n\geq N_{\epsilon}}\mathbb{P}_{\mathbf{X}}\big(E_{h}^{B_{n}}\big)\leq 1\). Then again by dominated convergence in the above inequality,
\begin{align*}
\lim\nolimits_{g\to g_{o}}\mathbb{P}_{\mathbf{X}}\big(Q_{g}^{-1}\left(E_{h}^{I}\right)\cap Q_{g_{o}}^{-1}\big(E_{h}^{\mathbb{R}\backslash I}\big)\big)
&\leq\sum\nolimits_{n\geq N_{\epsilon}}\lim\nolimits_{g\to g_{o}}\mathbb{P}_{\mathbf{X}}\big(Q_{g}^{-1}\big(E_{h}^{B_{n}}\big)\cap Q_{g_{o}}^{-1}\big(E_{h}^{\mathbb{R}\setminus I}\big)\big)\\
&\leq\sum\nolimits_{n\geq N_{\epsilon}}\lim\nolimits_{g\to g_{o}}\mathbb{P}\left(\left|X_{gh}-X_{g_{o}h}\right|>1/\left(n+1\right)\right)=0,
\end{align*}
where the last equality is by stochastic continuity of \(\mathbf{X}\). This establishes \eqref{eq:sufft}.
\end{proof}

\section*{Acknowledgments}

We thank Sasha Danilenko for sharing with us the idea of testing ergodicity of probability preserving systems via taking closures in the automorphism group, that inspired \cref{thm:stochext}. We are grateful to Zemer Kosloff for several helpful discussions. We thank Jeff Steif, from whom we learned about the canonical metric, for interesting discussions on infinite divisibility. We also thank the anonymous reviewer for their careful reading and thoughtful comments, which led to valuable additions and improved the clarity of the paper.

\bibliographystyle{amsplain}
\bibliography{references}

\end{document}